\newcommand{\sB}{\ensuremath{\mathscr{B}}}
\newcommand{\sC}{\ensuremath{\mathscr{C}}}
\newcommand{\sL}{\ensuremath{\mathscr{L}}}
\newcommand{\sR}{\ensuremath{\mathscr{R}}}
\newcommand{\bbQ}{\ensuremath{\mathbb{Q}}}
\newcommand{\bbZ}{\ensuremath{\mathbb{Z}}}
\newcommand{\bC}{\ensuremath{\mathbf{C}}}
\newcommand{\bD}{\ensuremath{\mathbf{D}}}
\newcommand{\bS}{\ensuremath{\mathbf{S}}}
\newcommand{\bT}{\ensuremath{\mathbf{T}}}
\newcommand{\bZ}{\ensuremath{\mathbf{Z}}}
\newcommand{\fa}{\ensuremath{\mathfrak{a}}}
\newcommand{\fc}{\ensuremath{\mathfrak{c}}}
\newcommand{\fl}{\ensuremath{\mathfrak{l}}}
\newcommand{\fr}{\ensuremath{\mathfrak{r}}}
\newcommand{\fs}{\ensuremath{\mathfrak{s}}}
\newcommand{\ten}{\ensuremath{\otimes}}
\newcommand{\id}{\ensuremath{\operatorname{id}}}
\newcommand{\Ho}{\ensuremath{\operatorname{Ho}}}
\newcommand{\op}{\ensuremath{^{\mathit{op}}}}
\newcommand{\iso}{\cong}
\newcommand{\too}[1][]{\ensuremath{\overset{#1}{\longrightarrow}}}
\newcommand{\oot}[1][]{\ensuremath{\overset{#1}{\longleftarrow}}}
\newcommand{\ot}{\ensuremath{\leftarrow}}
\newcommand{\toto}{\ensuremath{\rightrightarrows}}
\newcommand{\maps}{\colon}
\let\xto\xrightarrow
\def\xiso#1{\mathrel{\mathrlap{\smash{\xto[\smash{\raisebox{1.3mm}{$\scriptstyle\sim$}}]{#1}}}\hphantom{\xto{#1}}}}
\def\slashedarrowfill@#1#2#3#4#5{%
  $\m@th\thickmuskip0mu\medmuskip\thickmuskip\thinmuskip\thickmuskip
   \relax#5#1\mkern-7mu%
   \cleaders\hbox{$#5\mkern-2mu#2\mkern-2mu$}\hfill
   \mathclap{#3}\mathclap{#2}%
   \cleaders\hbox{$#5\mkern-2mu#2\mkern-2mu$}\hfill
   \mkern-7mu#4$%
}
\def\rightslashedarrowfill@{%
  \slashedarrowfill@\relbar\relbar\mapstochar\rightarrow}
\newcommand\xslashedrightarrow[2][]{%
  \ext@arrow 0055{\rightslashedarrowfill@}{#1}{#2}}
\def\hto{\xslashedrightarrow{}}
\newcommand{\bc}[2]{\ensuremath{\prescript{}{#1}{#2}}}
\newcommand{\bcd}[2]{\ensuremath{{#2}_{#1}}}
\newcommand{\sh}[1]{{\ensuremath{\hspace{1mm}\makebox[-1mm]{$\langle$}\makebox[0mm]{$\langle$}\hspace{1mm}{#1}\makebox[1mm]{$\rangle$}\makebox[0mm]{$\rangle$}}}}
\newcommand{\scriptsh}[1]{{\ensuremath{\hspace{1mm}\makebox[-1mm]{$\scriptstyle\langle$}\makebox[0mm]{$\scriptstyle\langle$}\hspace{1mm}{#1}\makebox[1mm]{$\scriptstyle\rangle$}\makebox[0mm]{$\scriptstyle\rangle$}}}}
\newcommand{\bigsh}[1]{{\ensuremath{\hspace{1mm}\makebox[-1mm]{$\big\langle$}\makebox[0mm]{$\big\langle$}\hspace{1mm}{#1}\makebox[1mm]{$\big\rangle$}\makebox[0mm]{$\big\rangle$}}}}
\newcommand{\Bigsh}[1]{{\ensuremath{\hspace{1mm}\makebox[-1mm]{$\Big\langle$}\makebox[0mm]{$\Big\langle$}\hspace{1mm}{#1}\makebox[1mm]{$\Big\rangle$}\makebox[0mm]{$\Big\rangle$}}}}
\newtheorem{thm}{Theorem}[section]
\newtheorem{cor}{Corollary}
\let\c@cor\c@thm\makeatother
  \numberwithin{cor}{section}
\newtheorem{prop}{Proposition}
\let\c@prop\c@thm\makeatother
  \numberwithin{prop}{section}
\newtheorem{lem}{Lemma}
\let\c@lem\c@thm\makeatother
  \numberwithin{lem}{section}
\theoremstyle{definition}
\newtheorem{defn}{Definition}
\let\c@defn\c@thm\makeatother
  \numberwithin{defn}{section}
\let\c@notn\c@thm\makeatother
  \numberwithin{notn}{section}
\theoremstyle{remark}
\newtheorem{rmk}{Remark}
\let\c@rmk\c@thm\makeatother
  \numberwithin{rmk}{section}
\newtheorem{eg}{Example}
\let\c@eg\c@thm\makeatother
  \numberwithin{eg}{section}
\newtheorem{egs}{Examples}
\let\c@egs\c@thm\makeatother
  \numberwithin{egs}{section}
\let\c@equation\c@thm
\numberwithin{equation}{section}
\def\alwaysmath#1{\expandafter\expandafter\expandafter\global\expandafter\expandafter\expandafter\let\expandafter\expandafter\csname your@#1\endcsname\csname #1\endcsname
  \expandafter\def\csname #1\endcsname{\ensuremath{\csname your@#1\endcsname}}}
\newcommand{\ep}{\ensuremath{\varepsilon}}
\let\al\alpha
\newcommand{\rdual}[1]{{{#1}^{\bigstar}}}
\newcommand{\ral}[1]{\al^{\bigstar}_{#1}}
\newcommand{\Ft}{\ensuremath{F_{\mathrm{tr}}}}
\newcommand{\Gt}{\ensuremath{G_{\mathrm{tr}}}}
\newcommand{\alt}{\ensuremath{\alpha_{\mathrm{tr}}}}
\newcommand{\tr}{\ensuremath{\operatorname{tr}}}
\newcommand{\fii}{\ensuremath{\mathfrak{i}}}
\newcommand{\Sip}{\ensuremath{\Sigma^\infty_+}}
\newcommand{\proj}[1]{\ensuremath{\mathrm{pr}}_{#1}}
\newcommand{\res}[1]{{({#1})}}
\newcommand{\Set}{\ensuremath{\mathbf{Set}}}
\newcommand{\Top}{\ensuremath{\mathbf{Top}}}
\newcommand{\Sp}{\ensuremath{\mathbf{Sp}}}
\newcommand{\Vect}[1]{\ensuremath{\mathbf{Vect}_{#1}}}
\newcommand{\bCh}[1]{\ensuremath{\mathbf{Ch}_{#1}}}
\newcommand{\gSp}[1]{\ensuremath{{#1}\text{-}\mathbf{Sp}}}
\newcommand{\bEx}[1]{\ensuremath{\mathbf{Sp}_{#1}}}
\newcommand{\Cat}{\ensuremath{\mathcal{C}\mathit{at}}}
\newcommand{\calMod}{\ensuremath{\mathcal{M}\mathit{od}/\!_{\mathcal{R}\mathit{ing}}}}
\newcommand{\calGrMod}{\ensuremath{\mathcal{G}\mathit{r}\mathcal{M}\mathit{od}/\!_{\mathcal{R}\mathit{ing}}}}
\def\calMat(#1){\ensuremath{\mathcal{M}\mathit{at}(\mathbf{#1})/\!_{\mathcal{S}\mathit{et}}}}
\newcommand{\calCh}{\ensuremath{\mathcal{C}\mathit{h}/\!_{\mathcal{R}\mathit{ing}}}}
\newcommand{\calChDGA}{\ensuremath{\mathcal{C}\mathit{h}/\!_{\mathit{DGA}}}}
\newcommand{\calEx}{\ensuremath{\mathcal{S}\mathit{p}/\!_{\mathcal{T}\mathit{op}}}}
\def\calSpan{\futurelet\next@char\@calSpan}
\def\@calSpan{\if\next@char(\def\span@next{\@@calSpan}\else\def\span@next{\@@calSpan(S)}\fi\span@next}
\def\@@calSpan(#1){\ensuremath{\mathbf{#1}/\!_{\mathbf{#1}}}}
\newcommand{\calnCob}{\ensuremath{n\mathcal{C}\mathit{ob}/\!_{\mathcal{M}\mathit{fd}}}}
\newcommand{\bgptop}{\ensuremath{\mathcal{T}\!\mathit{op}_*/\!_{\mathcal{G}\mathit{rp}}}}
\newcommand{\bgpsp}{\ensuremath{\mathcal{S}\mathit{p}/\!_{\mathcal{G}\mathit{rp}}}}
\newcommand{\hobgpsp}{\ensuremath{\Ho(\mathcal{S}\mathit{p}/\!_{\mathcal{G}\mathit{rp}})}}
\newcommand{\bccat}[1]{\ensuremath{{#1}/\!_\star}}
\def\Span{\futurelet\next@char\@bbSpan}
\def\@bbSpan{\if\next@char(\def\span@next{\@@bbSpan}\else\def\span@next{\@@bbSpan(S)}\fi\span@next}
\def\@@bbSpan(#1){\ensuremath{\mathbf{#1}\mathord{\sslash}\!_{\mathbf{#1}}}}
\def\Mat(#1){\ensuremath{\mathbb{M}\mathbf{at}(#1)\mathord{\sslash}\!_{\mathbb{S}\mathbf{et}}}}
\tikzset{ed/.style={auto,inner sep=0pt,font=\scriptsize}} 
\tikzset{>=stealth'}
\tikzset{vert/.style={draw,circle,inner sep=1pt,fill=white}}
\tikzset{vert2/.style={draw,circle,inner sep=2pt,fill=white}}
\colorlet{myblue}{blue!40!white}
\colorlet{myred}{red!35!white}
\colorlet{mygreen}{green!30!white}
\colorlet{myyellow}{yellow!10!white}
\tikzset{bluefill/.style={fill=myblue}}
\tikzset{redfill/.style={fill=myred}}
\tikzset{greenfill/.style={fill=mygreen}}
\tikzset{yellowfill/.style={fill=myyellow}}
\tikzset{dotsF/.style={pattern=north east lines,pattern color=black!60!white}}
\tikzset{dotsG/.style={pattern=north west lines,pattern color=black!60!white}}
\tikzset{transf/.style={decorate,decoration={zigzag,amplitude=1pt,segment length=3pt}}}
\def\bgcylinder#1#2#3#4#5#6{
  \def\cylempty{}\def\cylfrontcolor{#5}\def\cylbackcolor{#6}
  \begin{pgfonlayer}{background}
    \ifx\cylfrontcolor\cylempty\draw\else\fill[fill=my#5]\fi
    (#1) coordinate (dl)
    -- ++(0,#2) node[coordinate] (ul) {} 
    arc (-180:0:#3 and #4) coordinate (ur)
    -- ++(0,-#2) node[coordinate] (dr) {}
    arc (0:-180:#3 and #4);
    \ifx\cylbackcolor\cylempty\draw\else\fill[fill=my#6!80!black]\fi
    ($(ul)!.5!(ur)$) node[coordinate] (top) {} ellipse (#3 and #4);
    \path (dl) arc (-180:-90:#3 and #4) node[coordinate] (bot) {};
    \clip (dl) -- (ul) arc (-180:0:#3 and #4) -- (dr) arc (0:-180:#3 and #4);
  \end{pgfonlayer}
  \begin{pgfonlayer}{foreground}
    \clip (dl) -- (ul) arc (-180:0:#3 and #4) -- (dr) arc (0:-180:#3 and #4);
  \end{pgfonlayer}
  \clip (dl) -- (ul) arc (-180:0:#3 and #4) -- (dr) arc (0:-180:#3 and #4);
  \path (ul) ++(-0.1,0.1) coordinate (ul');
  \path (ur) ++(0.1,0.1) coordinate (ur');
  \path (dl) ++(-0.1,-0.1) coordinate (dl');
  \path (dr) ++(0.1,-0.1) coordinate (dr');
  \path (top) ++(0,0.1) coordinate (top');
  \path (bot) ++(-0,-0.1) coordinate (bot');
}
\def\drawtheta#1#2#3{
  \path ($(ul)!(#1)!(dl)$) ++(0,#2) coordinate (#3L);
  \draw[dashed] (#3L) to [out=-90,in=90,looseness=0.5] ($(ur)!(#3L)!(dr) + (0,-1)$) coordinate (#3R);
}
\def\elltheta#1#2#3{
  \path ($(ul)!(#1)!(dl)$) ++(0,#2) coordinate (#3L);
  \draw[dashed] (#3L) to [out=90,in=90,looseness=0.5] ($(ur)!(#3L)!(dr)$) coordinate (#3R);
}
\newenvironment{tikzcenter}{\begin{center}\begin{tikzpicture}}{\end{tikzpicture}\end{center}}
\newif\iftikz@to@relp
\newif\iftikz@to@relpp
\tikzstyle{every curve to}=          []
\tikzstyle{curve to}=                [to path=\tikz@to@curve@path]
  \def\pgf@temp{#1}%
    \def\tikz@to@bend{#1}%
  \let\tikz@to@out=\tikz@to@bend%
  \edef\tikz@to@in{\the\c@pgf@counta}%
  \def\pgf@temp{#1}%
    \def\tikz@to@bend{#1}%
  \edef\tikz@to@out{\the\c@pgf@counta}%
  \edef\tikz@to@in{\the\c@pgf@counta}%
\newif\iftikz@to@relative
\def\tikz@to@set@distances#1#2#3#4{%
  \tikz@to@setifnotempy{#1}{\tikz@to@in@min}{\let\tikz@to@end@compute=\tikz@to@end@compute@looseness}%
  \tikz@to@setifnotempy{#2}{\tikz@to@in@max}{\let\tikz@to@end@compute=\tikz@to@end@compute@looseness}%
  \tikz@to@setifnotempy{#3}{\tikz@to@out@min}{\let\tikz@to@start@compute=\tikz@to@start@compute@looseness}%
  \tikz@to@setifnotempy{#4}{\tikz@to@out@max}{\let\tikz@to@start@compute=\tikz@to@start@compute@looseness}%
  \tikz@to@switch@on%
}
\def\tikz@to@setifnotempy#1#2#3{%
  \def\pgf@temp{#1}%
  \ifx\pgf@temp\pgfutil@empty\else\def#2{#1}#3\fi%
}
\def\tikz@to@set@in@looseness#1{%
  \def\tikz@to@in@looseness{#1}%
  \let\tikz@to@end@compute=\tikz@to@end@compute@looseness%
  \tikz@to@switch@on%
}
\def\tikz@to@set@out@looseness#1{%
  \def\tikz@to@out@looseness{#1}%
  \let\tikz@to@start@compute=\tikz@to@start@compute@looseness%
  \tikz@to@switch@on%
}
\def\tikz@to@parse@controls#1and#2\pgf@stop{\tikz@to@set@in@control{#2}\tikz@to@set@out@control{#1}}
\def\tikz@to@set@in@control#1{%
  \def\tikz@to@in@control{#1}%
  \let\tikz@to@end@compute=\tikz@to@end@compute@control%
  \tikz@to@switch@on%
}
\def\tikz@to@set@out@control#1{%
  \def\tikz@to@out@control{#1}%
  \let\tikz@to@start@compute=\tikz@to@start@compute@control%
  \tikz@to@switch@on%
}
\def\tikz@to@bend{30}
\def\tikz@to@out{45}
\def\tikz@to@in{135}
\def\tikz@to@out@looseness{1}
\def\tikz@to@in@looseness{1}
\def\tikz@to@in@min{0pt}
\def\tikz@to@in@max{10000pt}
\def\tikz@to@out@min{0pt}
\def\tikz@to@out@max{10000pt}
\def\tikz@to@switch@on{\let\tikz@to@path=\tikz@to@curve@path}
\def\tikz@to@curve@path{%
  [every curve to]
  \pgfextra{\iftikz@to@relative\tikz@to@compute@relative\else\tikz@to@compute\fi}
  \tikz@computed@path
  \tikztonodes%
  \pgfextra{\tikz@to@relpfalse\tikz@to@relppfalse}%
}
\def\tikz@to@modify#1#2{%
  \pgfutil@ifundefined{pgf@sh@ns@#1}
  {}%
  {\edef#1{#1.#2}}
}%
\def\tikz@to@compute{%
  \let\tikz@tofrom=\tikztostart%
  \let\tikz@toto=\tikztotarget%
  \tikz@to@modify\tikz@tofrom\tikz@to@out%
  \tikz@to@modify\tikz@toto\tikz@to@in%
  \ifx\tikz@to@start@compute\tikz@to@start@compute@looseness%
    \tikz@to@compute@distance%
  \else%
    \ifx\tikz@from@start@compute\tikz@to@start@compute@looseness%
      \tikz@to@compute@distance%
    \fi%
  \fi%
  \tikz@to@start@compute%
  \tikz@to@end@compute%
  \iftikz@to@relp
    \edef\tikz@computed@path{.. controls \tikz@computed@start and \tikz@computed@end .. +(\tikz@toto)}
  \else
    \iftikz@to@relpp  
      \edef\tikz@computed@path{.. controls \tikz@computed@start and \tikz@computed@end .. ++(\tikz@toto)}
    \else
      \edef\tikz@computed@path{.. controls \tikz@computed@start and \tikz@computed@end .. (\tikz@toto)}
    \fi
  \fi
}
\def\tikz@to@compute@distance{\tikz@scan@one@point\tikz@@to@compute@distance(\tikz@tofrom)}
\def\tikz@@to@compute@distance#1{%
  \def\tikz@first@point{#1}%
  \iftikz@to@relp%
    \tikz@scan@one@point\tikz@@@to@compute@distance([shift={(\tikz@toto)}]\tikz@tofrom)%
  \else%
    \iftikz@to@relpp%
      \tikz@scan@one@point\tikz@@@to@compute@distance([shift={(\tikz@toto)}]\tikz@tofrom)%
    \else%
      \tikz@scan@one@point\tikz@@@to@compute@distance(\tikz@toto)%
    \fi%
  \fi}
\def\tikz@@@to@compute@distance#1{%
  \def\tikz@second@point{#1}%
  \tikz@to@compute@distance@main%
}
\def\tikz@to@compute@distance@main{%
  \pgf@process{\pgfpointdiff{\tikz@first@point}{\tikz@second@point}}%
  \ifdim\pgf@x<0pt\pgf@xa=-\pgf@x\else\pgf@xa=\pgf@x\fi%
  \ifdim\pgf@y<0pt\pgf@ya=-\pgf@y\else\pgf@ya=\pgf@y\fi%
  %
  %
  \pgf@process{\pgfpointnormalised{\pgfqpoint{\pgf@xa}{\pgf@ya}}}%
  \ifdim\pgf@x>\pgf@y%
    \c@pgf@counta=\pgf@x%
    \ifnum\c@pgf@counta=0\relax%
    \else%
      \divide\c@pgf@counta by 255\relax%
      \pgf@xa=16\pgf@xa\relax%
      \divide\pgf@xa by\c@pgf@counta%
      \pgf@xa=16\pgf@xa\relax%
    \fi%
  \else%
    \c@pgf@counta=\pgf@y%
    \ifnum\c@pgf@counta=0\relax%
    \else%
      \divide\c@pgf@counta by 255\relax%
      \pgf@ya=16\pgf@ya\relax%
      \divide\pgf@ya by\c@pgf@counta%
      \pgf@xa=16\pgf@ya\relax%
    \fi%
  \fi%
  \pgf@x=0.3915\pgf@xa%
  \pgf@xa=\tikz@to@out@looseness\pgf@x%
  \pgf@xb=\tikz@to@in@looseness\pgf@x%
  \pgfmathsetlength{\pgf@ya}{\tikz@to@out@min}
  \ifdim\pgf@xa<\pgf@ya%
    \pgf@xa=\pgf@ya%
  \fi%
  \pgfmathsetlength{\pgf@ya}{\tikz@to@out@max}
  \ifdim\pgf@xa>\pgf@ya%
    \pgf@xa=\pgf@ya%
  \fi%
  \pgfmathsetlength{\pgf@ya}{\tikz@to@in@min}
  \ifdim\pgf@xb<\pgf@ya%
    \pgf@xb=\pgf@ya%
  \fi%
  \pgfmathsetlength{\pgf@ya}{\tikz@to@in@max}
  \ifdim\pgf@xb>\pgf@ya%
    \pgf@xb=\pgf@ya%
  \fi%
}
\def\tikz@to@start@compute@looseness{%
  \edef\tikz@computed@start{([shift=(\tikz@to@out:\the\pgf@xa)]\tikz@tofrom)}%
}
\def\tikz@to@end@compute@looseness{%
  \edef\tikz@computed@end{+(\tikz@to@in:\the\pgf@xb)}%
}
\def\tikz@to@start@compute@control{%
  \let\tikz@computed@start=\tikz@to@out@control%
}
\def\tikz@to@end@compute@control{%
  \let\tikz@computed@end=\tikz@to@in@control%
}
\let\tikz@to@start@compute=\tikz@to@start@compute@looseness%
\let\tikz@to@end@compute=\tikz@to@end@compute@looseness%
\def\tikz@to@compute@relative{%
  \tikz@scan@one@point\tikz@@to@compute@relative(\tikztostart)%
}
\def\tikz@@to@compute@relative#1{%
  \def\tikz@tofrom{#1}%
  \tikz@scan@one@point\tikz@@@to@compute@relative(\tikztotarget)%
}
\def\tikz@@@to@compute@relative#1{%
  \def\tikz@toto{#1}%
  \begingroup
    %
    %
    \pgfutil@ifundefined{pgf@sh@ns@\tikztostart}
    {%
      \let\tikz@first@point=\tikz@tofrom%
      \let\tikz@tostart@tikz=\pgfutil@empty
    }%
    {%
      {%
        \tikz@tofrom%
        \pgf@xc=\pgf@x%
        \pgf@yc=\pgf@y%
        {%
          \pgftransformreset%
          \pgftransformshift{\pgfqpoint{\pgf@xc}{\pgf@yc}}%
          \pgftransformrotate{\tikz@to@out}%
          \pgftransformshift{\pgfqpoint{-\pgf@xc}{-\pgf@yc}}%
          \pgf@process{\pgfpointtransformed{\tikz@toto}}%
        }%
        \pgf@xc=\pgf@x%
        \pgf@yc=\pgf@y%
        \pgfpointshapeborder{\tikztostart}{\pgfqpoint{\pgf@xc}{\pgf@yc}}%
        \xdef\tikz@tofrom@smuggle{\noexpand\pgfqpoint{\the\pgf@x}{\the\pgf@y}}
      }%
      \let\tikz@first@point=\tikz@tofrom@smuggle%
      \tikz@first@point%
      \edef\tikz@tostart@tikz{(\the\pgf@x,\the\pgf@y)}%
    }%
    \pgfutil@ifundefined{pgf@sh@ns@\tikztotarget}
    {%
      \let\tikz@second@point=\tikz@toto%
    }%
    {%
      {%
        \tikz@toto%
        \pgf@xc=\pgf@x%
        \pgf@yc=\pgf@y%
        {%
          \pgftransformreset%
          \pgftransformshift{\pgfqpoint{\pgf@xc}{\pgf@yc}}%
          \pgftransformrotate{180}%
          \pgftransformrotate{\tikz@to@in}%
          \pgftransformshift{\pgfqpoint{-\pgf@xc}{-\pgf@yc}}%
          \pgf@process{\pgfpointtransformed{\tikz@tofrom}}%
        }%
        \pgf@xc=\pgf@x%
        \pgf@yc=\pgf@y%
        \pgfpointshapeborder{\tikztotarget}{\pgfqpoint{\pgf@xc}{\pgf@yc}}%
        \xdef\tikz@toto@smuggle{\noexpand\pgfqpoint{\the\pgf@x}{\the\pgf@y}}
      }%
      \let\tikz@second@point=\tikz@toto@smuggle%
    }%
    \tikz@second@point%
    \edef\tikz@totarget@tikz{(\the\pgf@x,\the\pgf@y)}%
    %
    %
    \tikz@to@compute@distance@main%
    \edef\tikz@to@first@distance{\the\pgf@xa}%
    \edef\tikz@to@second@distance{\the\pgf@xb}%
    %
    %
    \pgftransformreset%
    \pgf@process{\tikz@first@point}%
    \pgf@xa=\pgf@x%
    \pgf@ya=\pgf@y%
    \pgf@process{\tikz@second@point}%
    \advance\pgf@x by-\pgf@xa%
    \advance\pgf@y by-\pgf@ya%
    \pgfpointnormalised{}%
    \pgf@xc=\pgf@x%
    \pgf@yc=\pgf@y%
    \pgf@xb=-\pgf@x%
    \pgf@yb=-\pgf@y%
    %
    %
    {%
      \pgftransformshift{\tikz@first@point}%
      \pgftransformcm{\pgf@sys@tonumber\pgf@xc}{\pgf@sys@tonumber\pgf@yc}{\pgf@sys@tonumber\pgf@yb}{\pgf@sys@tonumber\pgf@xc}%
                      {\pgfpointorigin}%
      \pgf@process{\pgfpointtransformed{\pgfpointpolar{\tikz@to@out}{\tikz@to@first@distance}}}%
      \xdef\tikz@computed@start{(\the\pgf@x,\the\pgf@y)}%
    }
    {%
      \pgftransformshift{\tikz@second@point}%
      \pgftransformcm{\pgf@sys@tonumber\pgf@xc}{\pgf@sys@tonumber\pgf@yc}{\pgf@sys@tonumber\pgf@yb}{\pgf@sys@tonumber\pgf@xc}%
                      {\pgfpointorigin}%
      \pgf@process{\pgfpointtransformed{\pgfpointpolar{\tikz@to@in}{\tikz@to@second@distance}}}%
      \xdef\tikz@computed@end{(\the\pgf@x,\the\pgf@y)}%
    }
    \xdef\tikz@computed@path{
      \tikz@tostart@tikz
      .. controls \tikz@computed@start and \tikz@computed@end ..
      \tikz@totarget@tikz}%
  \endgroup
}
\title{Shadows and traces in bicategories}
\thanks{Both authors were supported by National Science Foundation
  postdoctoral fellowships during the writing of this paper.
  The final publication is available at \texttt{www.springerlink.com}.
  It differs immaterially from this accepted manuscript.}
\author{Kate Ponto \and Michael Shulman}
\begin{document}

\maketitle

\begin{abstract}
  Traces in symmetric monoidal categories are well-known and have many
  applications; for instance, their functoriality directly implies the
  Lefschetz fixed point theorem.  However, for some applications, such as generalizations of the Lefschetz theorem, one
  needs ``noncommutative'' traces, such as the Hattori-Stallings trace
  for modules over noncommutative rings.  In this paper we study a
  generalization of the symmetric monoidal trace which applies to
  noncommutative situations; its context is a bicategory equipped with
  an extra structure called a ``shadow.''
  In particular, we prove its functoriality and 2-functoriality, which are essential to its applications in fixed-point theory.
  Throughout we make use of an appropriate ``cylindrical'' type of string diagram, which we justify formally in an appendix.
\keywords{bicategory\and trace\and fixed-point theory}
\end{abstract}

\section{Introduction}
\label{sec:intro}

The purpose of this paper is to study and exposit a categorical notion
of \emph{trace} for endo-2-cells in a bicategory.  Since there are
also other sorts of categorical ``traces,'' we begin by briefly
describing where ours fits into the general picture.  Probably the
most basic sort of trace is the trace of a square matrix over a field.
This generalizes to square matrices over a commutative ring, and even
to endomorphisms of finitely generated projective modules over a
commutative ring.  It is well-known that there is an appropriate
categorical definition of such traces, which applies to any
endomorphism of a \emph{dualizable object} in a \emph{symmetric
  monoidal category}; see, for example,~\cite{dp:duality,kl:cpt}.

This general definition includes many important notions in topology, geometry, and algebra.
For instance, traces in the stable homotopy category can be identified with \emph{fixed point indices}, which include in particular \emph{Euler characteristics} (as the traces of identity maps)---while traces in the derived category of a ring are called \emph{Lefschetz numbers}.
Moreover, the category-theoretic definition of trace trivially implies that it is preserved by symmetric monoidal functors, such as homology.
We thereby immediately obtain the classical \emph{Lefschetz fixed point theorem}: if an endomorphism of a dualizable space (such as a finite-dimensional manifold) has no fixed points, then its fixed point index is zero, and hence so is the Lefschetz number of the map it induces on homology.
The search for similar category-theoretic expressions of other fixed-point invariants was the primary motivation for the generalization of trace we will present in this paper; see~\cite{kate:traces, kate:rel,equiv}.
  
One way to generalize traces is to remove the requirement that the objects be dualizable.
In this case, traces become extra \emph{structure} imposed on a symmetric monoidal category; see~\cite{jsv:traced-moncat}.
Such traces can even exist in cartesian monoidal categories, though there are no nontrivial dual pairs in such a category.
The connection with fixed points is then even more striking: a trace on a cartesian monoidal category is equivalent to an operator which chooses a fixed point for every map (see~\cite{hasegawa}).
These sort of traces are especially important in computer science, where the fixed-point operator is identified with a recursion combinator.
However, they are not particularly useful in topological fixed point theory, since a given map may have zero, one, two, or more fixed points of equal importance, and so there can be no uniform fixed-point--assigning operator.

Thus, we will generalize in a different direction: we still consider only dualizable objects, but relax the requirement of symmetry.
It is easy to define traces for dualizable objects in a monoidal category which is merely braided, or more precisely \emph{balanced} (see, e.g.,~\cite{jsv:traced-moncat}).
However, the applications in~\cite{kate:traces, kate:rel,equiv} require notions of trace in even less commutative situations.
For instance, Stallings~\cite{stallings} proved that for modules over a noncommutative ring, there is a unique notion of trace which is additive and cyclic.
This ``Hattori-Stallings trace'' (see also~\cite{hattori}) lives in a suitable quotient of the ring, rather than the ring itself.
We will present a generalization of the symmetric monoidal trace which includes the Hattori-Stallings trace.
It was invented by the first author for applications in fixed point theory; see~\cite{kate:traces}.

Modules over noncommutative rings do not form a monoidal category at
all, but rather a \emph{bicategory} (or, if necessary, a double
category; see e.g.~\cite{shulman:frbi}).  It makes no sense to ask whether a bicategory is
symmetric, braided, or balanced, but it turns out that there is a type
of structure we can impose on a bicategory which enables us to define
traces therein.  We refer to this structure as a \emph{shadow},
although it could also quite reasonably be called a ``2-trace,'' since
it is itself a categorified kind of trace.  (This is an example of the
\emph{microcosm principle} of~\cite{bd:hda3}.)  Many bicategories are
naturally equipped with shadows, such as the following:
\begin{enumerate}
\item Rings and bimodules (the Hattori-Stallings context), as well as
 generalizations such as DGAs and chain complexes.
\item Parametrized spaces and parametrized spectra, as studied
 in~\cite{maysig:pht}.
\item Spaces with group actions, as studied in~\cite{ranicki}.
\item $n$-dimensional manifolds and cobordisms.
\item Categories and profunctors (also called bimodules or
 distributors), with enriched and internal variations.
\end{enumerate}
The goal of this paper is to define and explain the notions of shadow and trace in bicategories and prove some of their basic properties.
The main property we are interested in is functoriality, since this is what makes Lefschetz-style theorems fall out easily.
Thus, the main part of the paper can be regarded as a build-up to our functoriality results.

Our intent is to make this paper a ``bridge'', accessible to two audiences.
On the one hand, there are topologists 
familiar with the topological applications and interested in the formal foundations who may appreciate an introduction to the basic category-theoretic ideas.
On the other hand, there may be category-theorists interested in applications of categorified traces, who will understand the categorical ideas and notation already, but may have little background in topology.
This makes for a difficult balancing act.

Since the central definitions and results in this paper are category-theoretic, we have chosen to spend more time on the necessary categorical background.
The topology appears only in examples, and for these we give some intuitive description, along with references for further reading.
We have also tried to include enough non-topological examples that even a reader without any topological background can grasp the categorical ideas---although we stress that the most important applications are topological (see~\cite{kate:traces, kate:rel,equiv,PS:mult}).

We start with a brief review of the classical theory of symmetric monoidal traces, so that the analogies with bicategorical trace will be clear.
\autoref{sec:traces} contains the basic definitions, examples, and properties, including the all-important functoriality.
Less often cited, but also important, is what we call ``2-functoriality'': symmetric monoidal traces commute not only with functors but with natural transformations.
Everything in this section can be found in classical references such as~\cite{dp:duality,jsv:traced-moncat}.
We also summarize the classical ``string diagram'' notation for symmetric monoidal categories, which provides a convenient notation and calculus for manipulating composites of many morphisms; see~\cite{penrose:negdimten,pr:spinors,js:geom-tenscalc-i,jsv:traced-moncat,selinger:graphical,street:ldtop-hocat}.
\autoref{sec:bicategories} is also a review of classical material, this time the definition of bicategories and their string diagrams.

The next sections \ref{sec:shadows} and \ref{sec:bicat-traces} contain the basic definitions of shadows and traces, respectively, with \autoref{sec:examples} devoted to a number of examples.
These definitions originally appeared in~\cite{kate:traces}, but here we study them carefully from a categorical perspective.
In particular, in \autoref{sec:prop-bicat} we prove a number of formal properties of the bicategorical trace, analogous to the familiar formal properties of symmetric monoidal trace.
These properties are most conveniently expressed and proven using an appropriate string diagram calculus, which in the case of shadows involves diagrams drawn on a \emph{cylinder}; we introduce these string diagrams in \S\ref{sec:shadows}.

Finally, in Sections \ref{sec:funct-bicat} and \ref{sec:transf} we prove the crucial results about functoriality and 2-functoriality for the bicategorical trace, starting with the necessary definitions.
Just as a bicategory must be equipped with the extra structure of a shadow in order to define traces, a functor of bicategories must be given the structure of a ``shadow functor'' in order for it to respect traces.
The appropriate notion of ``shadow transformation'' is somewhat more subtle, involving a bicategorical transformation whose components are dual pairs, rather than single 1-cells.
The definitions of shadow functor and shadow transformation are included in \S\S\ref{sec:funct-bicat} and \ref{sec:transf} respectively.

Finally, in Appendix \ref{sec:string-diagrams} we give a formal basis to our cylindrical string diagrams by proving that any such labeled string diagram determines a unique deformation-invariant composite.
This is a technical, but fairly straightforward extension of the classical proof for monoidal categories in~\cite{js:geom-tenscalc-i}.

\subsection*{Acknowledgments}
\label{sec:acknowledgments}

The authors would like to thank Niles Johnson, for careful reading and
helpful comments.


\section{Traces in symmetric monoidal categories}
\label{sec:traces}

We begin by reviewing traces in symmetric monoidal categories, using string diagram calculus, a summary of which can be found in \autoref{fig:string-moncat}.
These diagrams were first used by Penrose~\cite{penrose:negdimten,pr:spinors}, given a rigorous foundation by Joyal and Street~\cite{js:geom-tenscalc-i,jsv:traced-moncat}, and since then have been adapted to many different contexts; a comprehensive overview can be found in~\cite{selinger:graphical}.
They may be called ``Poincar\'e dual'' to the usual sorts of diagrams: instead of drawing objects as \emph{vertices} and morphisms as \emph{arrows}, we draw objects as \emph{strings} and morphisms as \emph{vertices}, often with boxes around them.
We may think of a morphism as a ``machine'' with its domain drawn as ``input'' strings coming into it and its codomain as ``output'' strings going out of it.

All of our string diagrams are read from top to bottom.
After a while we will omit the arrowheads on the strings, but in the beginning they can help to clarify the intent.
Note that although we draw the symmetry $M\ten N \iso N\ten M$ with one string crossing ``over'' the other, there is no meaning assigned to which one is in front, since our monoidal category is symmetric and not merely braided.

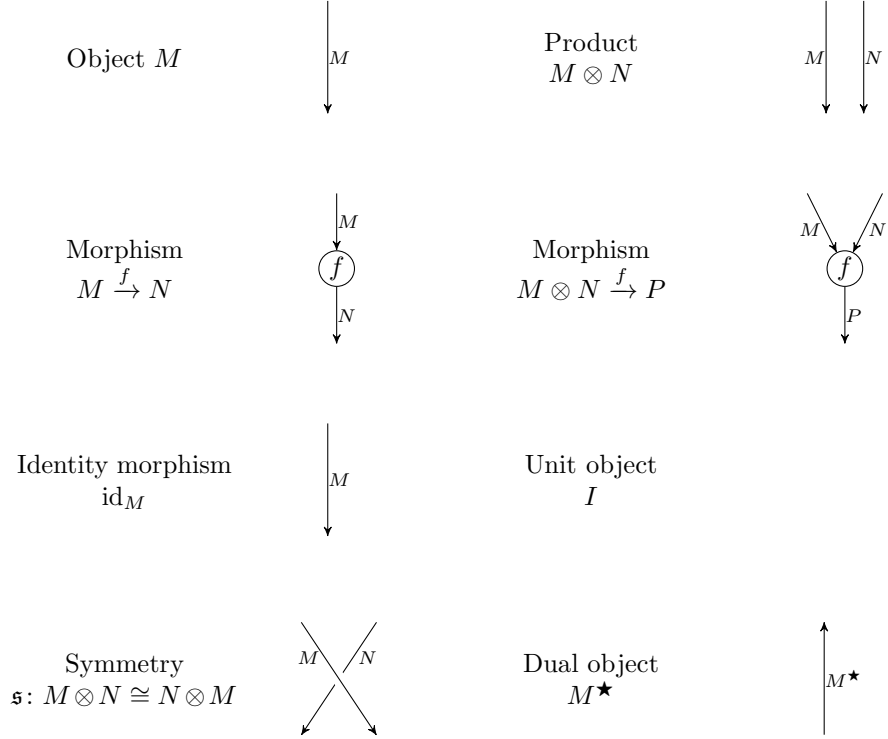
\begin{figure}[htb]
  \centering
  \begin{tabular}{m{30mm}m{20mm}m{40mm}m{20mm}}
    \begin{center}Object $M$\end{center} &
    \begin{center}\begin{tikzpicture}
      \draw[->] (0,1.5) -- node [ed] {$M$} (0,0);
    \end{tikzpicture}\end{center}
    &
    \begin{center}Product\\$M\ten N$\end{center} &
    \begin{center}\begin{tikzpicture}
      \draw[->] (0,1.5) -- node [ed,swap] {$M$} (0,0);
      \draw[->] (0.5,1.5) -- node [ed] {$N$} (0.5,0);
    \end{tikzpicture}\end{center}
    \\\\
    \begin{center}Morphism\\$M\xrightarrow{f} N$\end{center} &
    \begin{center}\begin{tikzpicture}
      \node[vert] (f) at (0,0) {$f$};
      \draw[->] (f) -- node [ed] {$N$} +(0,-1);
      \draw[<-] (f) -- node [ed,swap] {$M$} +(0,1);
    \end{tikzpicture}\end{center}
    &
    \begin{center}Morphism\\$M\ten N \xrightarrow{f} P$\end{center} &
    \begin{center}\begin{tikzpicture}
      \node[vert] (f) at (0,0) {$f$};
      \draw[->] (f) -- node [ed] {$P$} +(0,-1);
      \draw[<-] (f) -- node [ed] {$M$} +(-0.5,1);
      \draw[<-] (f) -- node [ed,swap] {$N$} +(0.5,1);
    \end{tikzpicture}\end{center}
    \\\\
    \begin{center}Identity morphism\\$\id_M$\end{center} &
    \begin{center}\begin{tikzpicture}
      \draw[->] (0,1.5) -- node [ed] {$M$} (0,0);
    \end{tikzpicture}\end{center}
    &
    \begin{center}Unit object\\$I$\end{center} &
    \\\\
    \begin{center}Symmetry\\$\fs\colon M\ten N \iso N\ten M$\end{center} &
    \begin{center}\begin{tikzpicture}
      \draw[->] (0.5,1.5) -- node [ed,near start] {$N$} (-0.5,0);
      \draw[white,line width=5pt] (-0.5,1.5) -- (0.5,0);
      \draw[->] (-0.5,1.5) -- node [ed,near start,swap] {$M$} (0.5,0);
    \end{tikzpicture}\end{center}
    &
    \begin{center}Dual object\\$\rdual{M}$\end{center} &
    \begin{center}\begin{tikzpicture}
      \draw[<-] (0,1.5) -- node [ed] {$\rdual{M}$} (0,0);
    \end{tikzpicture}\end{center}
  \end{tabular}
    \caption{String diagrams for monoidal categories}
  \label{fig:string-moncat}
\end{figure}

In~\cite{js:geom-tenscalc-i}, Joyal and Street gave a formal definition of the ``value'' of a ``string diagram'' whose arrows and vertices are labeled by objects and morphisms in a monoidal category, and showed that this value is invariant under deformations of diagrams.
Thus, manipulation of string diagrams is actually a fully rigorous way to prove theorems about symmetric monoidal categories.

\begin{defn}
  Let \bC\ be a symmetric monoidal category with product $\ten$ and
  unit object $I$.  An object $M$ of \bC\ is \textbf{dualizable} if
  there exists an object $\rdual{M}$, called its \textbf{dual}, and
  maps
  \begin{align*}
    I &\too[\eta] M\ten \rdual{M} & \rdual{M}\ten M &\too[\ep] I
  \end{align*}
  satisfying the triangle identities $(\id_M\ten \ep)(\eta\ten \id_M)
  = \id_M$ and $(\ep\ten \id_\rdual{M})(\id_\rdual{M}\ten \eta) =
  \id_{\rdual{M}}$.  We call $\ep$ the \textbf{evaluation} and $\eta$
  the \textbf{coevaluation}.
\end{defn}

Note that any two duals of an object $M$ are canonically isomorphic,
and that if $\rdual{M}$ is a dual of $M$, then $M$ is also a dual of
$\rdual{M}$.  If $M$ and $N$ are dualizable and $f\colon Q\otimes M
\rightarrow N\otimes P$ is a morphism in $\bC$, the \textbf{mate}
of $f$ is the composite
\[\xymatrix{\rdual{N}\otimes Q\ar[r]^-{\id\otimes \eta}&
\rdual{N}\otimes Q\otimes M\otimes \rdual{M}
\ar[r]^-{\id\otimes f\otimes \id}&\rdual{N}\otimes N\otimes P\otimes \rdual{M}
\ar[r]^-{\epsilon\otimes\id}&P\otimes \rdual{M}
}\]

Dual objects are represented graphically by turning around the direction of arrows, while the triangle identities translate into ``bent strings can be straightened;'' see Figures~\ref{fig:moncat-dual} and~\ref{fig:triangle}.
String diagrams for monoidal categories with duals are formalized in~\cite{js:geom-tenscalc-ii,js:pdta} allowing arbitrary behavior of strings (not restricting them to travel only vertically).
However, for most purposes it suffices to consider only vertical strings, regarding each ``turning around'' of a string as a vertex implicitly labeled by $\eta$ or $\varepsilon$ and the triangle identities simply as axioms (rather than deformations).

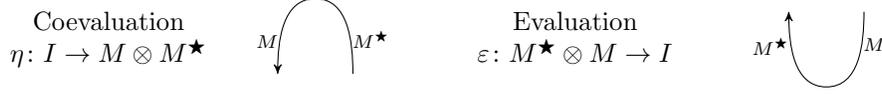
\begin{figure}[tb]
  \centering
    \begin{tabular}{m{30mm}m{20mm}m{40mm}m{20mm}}
    \begin{center}Coevaluation\\$\eta\maps I \to M\ten \rdual{M}$\end{center} &
    \begin{center}
      \begin{tikzpicture}
        \draw[<-] (0,0) to[out=90,in=180] node [ed,near start] {$M$} (0.5,1)
        to[out=0,in=90] node [ed,near end] {$\rdual{M}$} (1,0);
      \end{tikzpicture}
    \end{center}
    &
    \begin{center}Evaluation\\$\ep\maps \rdual{M}\ten M\to I$\end{center} &
      \begin{tikzpicture}
        \draw[<-] (0,0) to[out=-90,in=180] node [ed,near start,swap] {$\rdual{M}$} (0.5,-1)
        to[out=0,in=-90] node [ed,near end,swap] {$M$} (1,0);
      \end{tikzpicture}
  \end{tabular}
  \caption{Coevaluation and evaluation}
  \label{fig:moncat-dual}
\end{figure} 

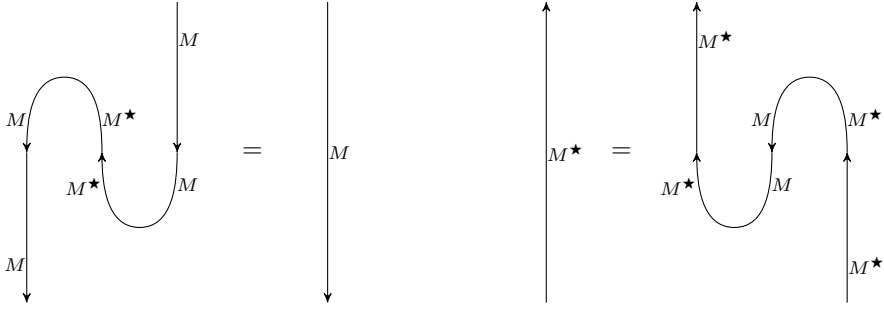
\begin{figure}[tb]
  \centering
  \begin{tabular}{m{50mm}m{15mm}m{50mm}}
    \begin{tikzpicture}
      \draw[<-] (0,0) -- node[ed,near start] {$M$} ++(0,2) coordinate (a);
      \draw[<-] (a) to[rrel,out=90,in=180] node [ed,near start] {$M$} (0.5,1)
      to[rrel,out=0,in=90] node [ed,near end] {$\rdual{M}$} (0.5,-1) coordinate (b);
      \draw[<-] (b) to[rrel,out=-90,in=180] node [ed,near start,swap] {$\rdual{M}$} (0.5,-1)
      to[rrel,out=0,in=-90] node [ed,near end,swap] {$M$} (0.5,1) coordinate (c);
      \draw[<-] (c) -- node[ed,near end,swap] {$M$} ++(0,2);
      \node at (3,2) {$=$};
      \draw[<-] (4,0) -- node[ed,swap] {$M$} ++(0,4);
    \end{tikzpicture} &
    &
    \begin{tikzpicture}
      \draw[->] (-2,0) -- node[ed,swap] {$\rdual{M}$} ++(0,4);
      \node at (-1,2) {$=$};
      \draw[<-] (0,4) -- node[ed,near start] {$\rdual{M}$} ++(0,-2) coordinate (a);
      \draw[<-] (a) to[rrel,out=-90,in=180] node [ed,near start,swap] {$\rdual{M}$} (0.5,-1)
      to[rrel,out=0,in=-90] node [ed,near end,swap] {$M$} (0.5,1) coordinate (b);
      \draw[<-] (b) to[rrel,out=90,in=180] node [ed,near start] {$M$} (0.5,1)
      to[rrel,out=0,in=90] node [ed,near end] {$\rdual{M}$} (0.5,-1) coordinate (c);
      \draw[<-] (c) -- node[ed,near end] {$\rdual{M}$} ++(0,-2);
    \end{tikzpicture}
  \end{tabular}
  \caption{The triangle identities}
  \label{fig:triangle}
\end{figure}


We now recall the definition of the trace of an endomorphism of a dualizable object in a symmetric monoidal category.
In fact, we can take the trace of more than just endomorphisms; we only require the dualizable object to appear as a factor of the source and target.

\begin{defn}\label{def:twisted-comm-trace}
  Let \bC\ be a symmetric monoidal category, $M$ a dualizable object
  of $\bC$, and $f\maps Q\ten M\to M\ten P$ a morphism in $\bC$.  The
  \textbf{trace} $\tr(f)$ of $f$ is the following composite:
  \begin{equation}
    Q \too[\eta] Q\ten M\ten \rdual{M} \too[f] M \ten P \ten \rdual{M}
    \xiso{\fs} \rdual{M}\ten M\ten P \too[\ep] P
    \label{eq:twisted-comm-trace}
  \end{equation}
\end{defn}

References for this notion of trace include~\cite{dp:duality,fy:brd-cpt,js:brd-tensor,jsv:traced-moncat,kl:cpt}.
The definition does not depend on the choice of dual $\rdual{M}$ or the evaluation and coevaluation maps.
When $Q=P=I$, it reduces to the more familiar definition of the trace of an endomorphism.
There are also two other degenerate cases of this definition that classically go by other names: the \textbf{Euler characteristic} (or \textbf{dimension}) of $M$ is the trace of its identity map, and the \textbf{transfer} of $M$ is the trace of a ``diagonal'' morphism $\Delta\maps M\to M\ten M$ when such exists (so in this case $P=M$ and $Q=I$).

In string diagram notation, the trace of a morphism looks like ``feeding its output into its input;'' see \autoref{fig:smctrace}.
The second picture looks cleaner and is more commonly drawn, but we note that it makes essential use of the symmetry, in regarding $f$ as a morphism $Q\ten M \to P\ten M$ and in switching the order in the source of the evaluation map.
Thus, since we intend to generalize away from symmetry, it is important to keep the first picture in mind as well.

\begin{figure}[tb]
  \centering
  \begin{tabular}{m{30mm}m{5mm}m{25mm}m{15mm}m{20mm}}
    \begin{tikzpicture}
      \node[vert] (f) at (0,0) {$f$};
      \draw[<-] (f) -- node [ed] {$Q$} +(-1,2.5);
      \draw[->] (f) -- node [ed,near end] {$P$} +(1,-3.5) coordinate (p);
      \draw[<-] (f) to[rrel,out=75,in=-90] node [ed] {$M$} (0.4,1.5)
      arc (180:0:0.3)
      -- node [ed] {$\rdual{M}$} ++(0,-2) node[coordinate] (c) {};
      \begin{pgfonlayer}{background}
        \draw (c) to[rrel,out=-90,in=90] (-2,-2.3) node[coordinate] (d) {};
      \end{pgfonlayer}
      \draw (d) arc (-180:0:0.3) node[coordinate] (e) {}
      to[out=90,in=-100,looseness=0.5] (f);
      \begin{pgfonlayer}{background}
        \draw[white,line width=5pt] (f) -- (p);
        \draw[white,line width=5pt] (f) -- (e);
      \end{pgfonlayer}
    \end{tikzpicture}
    & $=$ &
    $\xymatrix{Q \ar[d]^{\id\ten \eta} \\ Q\ten M\ten \rdual{M} \ar[d]^{f\ten
        \id} \\ M\ten P \ten \rdual{M} \ar[d]^{\iso} \\ \rdual{M}\ten
      M\ten P \ar[d]^{\ep\ten \id} \\ P}$
    & \begin{center}or\\just\end{center} &
    \begin{tikzpicture}
      \node[vert] (f) at (0,0) {$f$};
      \draw[<-] (f) -- node [ed] {$Q$} +(-1,2.5);
      \draw[->] (f) -- node [ed] {$P$} +(-1,-2.5);
      \draw[->] (f) to[out=-70,in=70,looseness=15]
      node [ed,swap] {$\rdual{M}$} (f);
    \end{tikzpicture}
  \end{tabular}
  \caption{The symmetric monoidal trace}
  \label{fig:smctrace}
\end{figure}
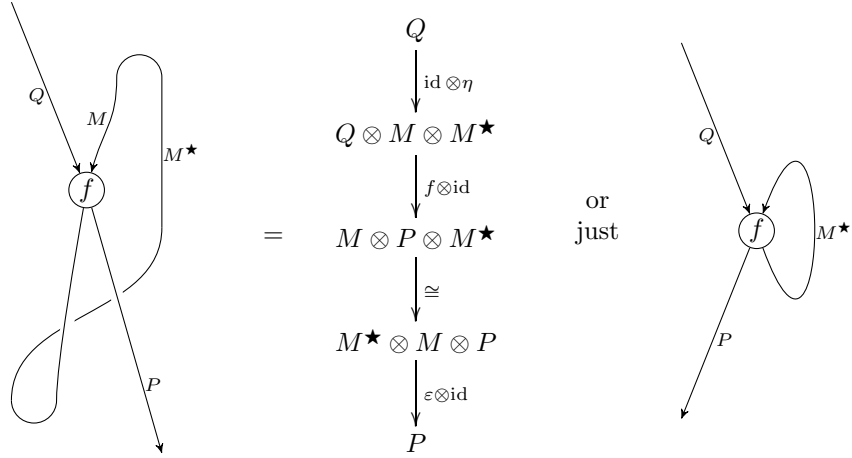

\begin{egs}\label{eg:vect}
  Let $\bC=\Vect{k}$ be the category of vector spaces over a
  field $k$.  A vector space is dualizable if and only if it is
  finite-dimensional, and its dual is the usual dual vector space.  We
  have $I=k$ and $\bC(I,I)\iso k$ by multiplication.  Using this identification,
  \autoref{def:twisted-comm-trace} recovers the usual trace of a matrix.  The
  Euler characteristic of a vector space (i.e.\ the trace of its identity map) is its dimension.

  This example generalizes to modules, chain complexes of modules, and
  the derived category of modules over a commutative ring.  In these
  cases the dualizable objects are the finitely generated projectives,
  finitely generated chain complexes of projectives, and chain
  complexes quasi-isomorphic to a finitely generated chain complex of
  projectives respectively.

  If $V= k[S]$ is the free module on a finite set $S$, then the diagonal $S\to S\times S$ induces a diagonal $V\to V\otimes V$.
  Thus, any endomorphism of $V$ has a ``transfer'' $k\to V$, which just picks out an element of $V$ (the image of $1$).
  If $f\colon V\rightarrow V$ is induced by an endomorphism $\hat{f}\colon S\rightarrow S$, then its transfer is the sum $\sum_{\hat{f}(s)=s} s$ of all the fixed points of $\hat{f}$.
\end{egs}

\begin{egs}\label{eg:stab-duality}
  Another important class of examples is topological: whenever $M$ is
  a closed smooth manifold (or a compact ENR), its suspension spectrum
  $\Sigma^\infty(M_+)$ is dualizable in the stable homotopy category.
  This is called \emph{Spanier-Whitehead duality}.

  For the reader unfamiliar with stable homotopy theory, what this
  means in concrete terms is that there is a pointed space $N$,
  together with evaluation and coevaluation maps
  \[ S^n \too[\eta] M_+ \wedge N \hspace{2cm} N \wedge M_+ \too[\ep] S^n
  \]
  for some large-dimensional sphere $S^n$, such that the triangle
  identities commute up to ``stable homotopy''.  Working ``stably''
  means roughly that we can do and undo smash products with spheres
  freely, and the stable homotopy category is a way of making that
  precise.  When formulated explicitly as above, this sort of duality
  is called \emph{$n$-duality}.

  Now, given an endomorphism $f$ of $M$, we can either map it into the
  stable homotopy category and take its trace as an endomorphism of
  $\Sigma^\infty(M_+)$, or define a trace directly using $n$-duality.
  In the latter case, we obtain an endomorphism of $S^n$, which is
  classified by an integer (its degree).  This is the \emph{fixed
    point index} of $f$, which counts the number of fixed points of
  $f$ with multiplicity;
  see~\cite{d:index,dp:duality,lms:equivariant,maysig:pht,td:groups}.
  In particular, the Euler characteristic of $\Sigma^\infty(M_+)$
  (that is, the trace of its identity map) can be identified with the
  usual Euler characteristic of $M$.

  Here we also
  have a diagonal $\Sigma^\infty(M_+) \to \Sigma^\infty(M_+) \wedge
  \Sigma^\infty(M_+)$ induced by the diagonal $\Delta\colon M\to
  M\times M$ of $M$, and hence there is a transfer $S\to
  \Sigma^\infty(M_+)$.
  This transfer can again be regarded as the ``formal sum'' of all the fixed points of the original endomorphism.
\end{egs}

\begin{rmk}\label{rmk:cartesian}
Note that the functors $k[-]\colon \Set \to \Vect{k}$ and $\Sigma^{\infty}(-)_{+}\colon\Top\to\Sp$ play a similar role in examples~\ref{eg:vect} and~\ref{eg:stab-duality}.
In both cases we start with an object in a cartesian monoidal category, where there are no nontrivial dualizable objects or traces, and apply a functor landing in a noncartesian monoidal category, after which our object becomes dualizable and we can calculate traces.
Moreover, in both cases the noncartesian monoidal category is ``additive'' and the trace gives us the ``sum'' of all the fixed points of a map in our original cartesian monoidal category.
\end{rmk}


The symmetric monoidal trace has the following fundamental property.

\begin{prop}[Cyclicity]\label{thm:twisted-comm-cyclic}
  If $M$ and $N$ are dualizable and $f\maps Q\otimes M\to N\otimes P$
  and $g\maps K\otimes N\to M\otimes L$ are morphisms, then
  \begin{equation}
    \tr\Big((g\otimes \id_P)(\id_K \otimes f)\Big)
    = \tr\Big(\fs(f\otimes \id_L) (\id_Q\otimes g)\fs\Big).\label{eq:smc-cyclicity}
  \end{equation}
  In particular, for $f\colon M\to N$ and $g\colon N\to M$, we have $\tr(g f) = \tr(f g)$.
\end{prop}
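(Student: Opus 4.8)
The plan is to prove the cyclicity identity using the string diagram calculus for symmetric monoidal categories, which (as recalled from \cite{js:geom-tenscalc-i}) reduces the claim to a sequence of deformation-invariant manipulations of labeled diagrams. First I would draw the trace of $(g\ten\id_P)(\id_K\ten f)$ as the composite $Q$-strand fed around through the coevaluation $\eta$ for $M$, then through $f$, then through $g$, then around through the evaluation $\ep$ for $M$. Because $f$ lands in $N\ten P\ten\rdual M$ and $g$ starts from $K\ten N$, the $N$-strand connecting them can be drawn explicitly; the key observation is that this $N$-strand together with the looped $\rdual M$-strand forms a single planar diagram with two ``cups and caps.'' The goal is to show this diagram is deformation-equivalent to the one computing $\tr(\fs(f\ten\id_L)(\id_Q\ten g)\fs)$, where now the roles of $M$ and $N$ are interchanged and the symmetry isomorphisms appear to route the $Q$ and $L$ strands past the traced strand.

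Concretely, the main step is to slide the ``$M$-cup/cap'' past the ``$N$-cup/cap'' along the composite. Starting from the left-hand side, the $\rdual M$ loop created by $\eta$ and consumed by $\ep$ can be deformed: one uses the triangle identities (Figure~\ref{fig:triangle}) to convert the $M$-duality data into $N$-duality data by pulling the loop all the way around through $f$ and $g$. More precisely, I would insert $\id = (\ep_N\ten\id)(\id\ten\eta_N)$ worth of structure — or rather, recognize that the existing $N$-strand already witnesses $N$ as a dual of $\rdual N$ — and then re-bracket the composite so that the coevaluation/evaluation being traced over is the one for $N$ rather than $M$. The symmetries $\fs$ on the right-hand side are exactly what is needed to make the source and target match up after this re-bracketing: without them, $f$ has source $Q\ten M$ rather than $M\ten Q$, so $\fs$ moves the $Q$ strand to the correct side, and similarly $\fs$ on the other side handles the $L$ strand and the order in the source of $\ep_N$.

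The step I expect to be the main obstacle is bookkeeping the symmetry isomorphisms correctly — ensuring that the crossings introduced when one ``rotates'' the traced strand from the $M$-side to the $N$-side are precisely the two instances of $\fs$ in \eqref{eq:smc-cyclicity}, with no leftover crossings. This is where it is essential, as the text emphasizes after Figure~\ref{fig:smctrace}, to work with the \emph{un}simplified picture of the trace (the first picture there), since the cleaner second picture already secretly uses a symmetry and would obscure exactly which $\fs$'s are needed. To handle this cleanly I would carry out the deformation in two halves: first verify the purely planar part of the equality (the part not involving any crossing) by a diagram chase using only naturality of composition and the triangle identities, and then separately track the symmetry crossings using the hexagon/naturality axioms for $\fs$. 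Since both sides are, after these manipulations, literally the same labeled string diagram up to deformation, the Joyal–Street coherence theorem gives the equality of the underlying morphisms.

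For the final ``in particular'' clause, I would simply specialize to $Q=P=K=L=I$: then $f\colon M\to N$ and $g\colon N\to M$, the tensor factors $\id_P$, $\id_K$, $\id_Q$, $\id_L$ all disappear, and the symmetries $\fs$ act on a unit factor and hence are identities, so \eqref{eq:smc-cyclicity} collapses to $\tr(gf)=\tr(fg)$.
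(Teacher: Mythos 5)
Your proposal is correct and takes essentially the same approach as the paper: draw both sides of \eqref{eq:smc-cyclicity} as string diagrams, observe that one deforms into the other by sliding the boxes around the traced loop, and invoke the Joyal--Street coherence theorem, with the ``in particular'' clause obtained by specializing $Q=P=K=L=I$. The paper simply exhibits the two diagrams (Figure~\ref{fig:smc-cyclicity}) and asserts the deformation, while you additionally sketch how to bookkeep the symmetries; this is a matter of detail, not of method.
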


This property can be proven directly with a long sequence of
equations, but a much more conceptual proof is possible using string
diagrams.  In \autoref{fig:smc-cyclicity} we have drawn both sides
of~\eqref{eq:smc-cyclicity} as string diagrams.  To prove
\autoref{thm:twisted-comm-cyclic} it then suffices to observe that one
of these diagrams can be deformed into the other (this is easiest to
see at first when $Q=P=K=L=I$ are the unit object).  The fundamental
theorem of Joyal and Street~\cite{js:geom-tenscalc-i} then implies
that~\eqref{eq:smc-cyclicity} holds in any symmetric monoidal
category.

\begin{figure}[tb]
  \centering
  \begin{tikzpicture}
    \node[vert] (f) at (0,0) {$f$};
    \draw[<-] (f) -- node [ed] {$Q$} +(-1,2);
    \draw[->] (f) -- node [ed,near end] {$P$} +(1.5,-4.5);
    \node[vert2] (g) at (-0.5,-1.5) {$g$};
    \draw[<-] (g) -- node [ed] {$K$} +(-1.3,3.5);
    \draw[->] (g) -- node [ed,near end] {$L$} +(1,-3);
    \draw[->] (f) -- node [ed] {$N$} (g);
    \draw[<-] (f) to[rrel,out=75,in=-90] node [ed] {$M$} (0.4,1.5)
    arc (180:0:0.3)
    -- node [ed] {$\rdual{M}$} ++(0,-3)
    to[rrel,out=-90,in=90] (-3,-2.3)
    coordinate[label={[ed]left:$\rdual{M}$}] arc (-180:0:0.3)
    coordinate[label={[ed]right:$M$}] to[out=90,in=-100] (g);
  \end{tikzpicture}
  \qquad\raisebox{3cm}{=}\qquad
  \begin{tikzpicture}
    \node[vert2] (g) at (0,-0.3) {$g$};
    \draw[<-] (g) to[rrel,out=110,in=-80] node [ed,swap] {$K$} (-1.5,2.2);
    \draw[->] (g) to[rrel,out=-70,in=90] node [ed,near end] {$L$} (1,-2.5)
    to[rrel,out=-90,in=80] (-1,-1.5);
    \node[vert] (f) at (-0.5,-1.5) {$f$};
    \draw[<-] (f) to[rrel,out=110,in=-90] node [ed] {$Q$} (-1,2)
    to[rrel,out=90,in=-110] (0.75,1.4);
    \draw[->] (f) to[rrel,out=-70,in=110] node [ed,swap] {$P$} (0.5,-1.5)
    to[rrel,out=-70,in=100] (1,-1.3);
    \draw[->] (g) -- node [ed] {$M$} (f);
    \draw[<-] (g) to[rrel,out=75,in=-90] node [ed] {$N$} (0.4,1)
    arc (180:0:0.3)
    -- node [ed] {$\rdual{N}$} ++(0,-2) 
    to[rrel,out=-90,in=90] (-3,-1.8)
    coordinate[label={[ed]left:$\rdual{N}$}] arc (-180:0:0.3)
    coordinate[label={[ed]right:$N$}] to[out=90,in=-100] (f);
  \end{tikzpicture}
  \caption{Cyclicity of the symmetric monoidal trace}
  \label{fig:smc-cyclicity}
\end{figure}
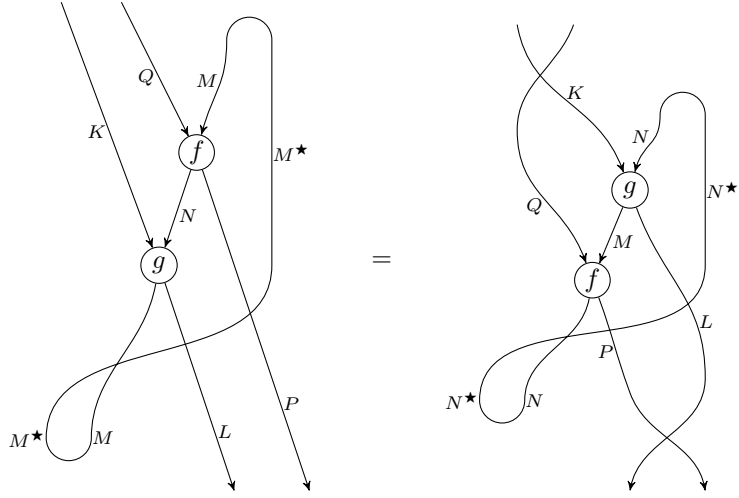

%
%

The symmetric monoidal trace satisfies many other useful naturality properties, most of which are likewise easiest to prove using string diagrams; see for instance~\cite{dp:duality,lms:equivariant,may:traces,jsv:traced-moncat}.
We will consider generalizations of many of these properties in \S\ref{sec:prop-bicat}.

Finally, as mentioned in the introduction, one of the main advantages of having an abstract formulation
of trace is that disparate notions of trace which all fall into the
general framework can be compared functorially.  Recall that a
\textbf{lax symmetric monoidal functor} $F\maps \bC\rightarrow \bD$
between symmetric monoidal categories consists of a functor $F$ and
natural transformations
\begin{align*}
 \fc\maps F(M)\ten F(N)&\too F(M\otimes N)\\
 \fii\maps I_{\bD}&\too F(I_{\bC})
\end{align*}
satisfying appropriate coherence axioms.  We say $F$ is
\textbf{normal} if \fii\ is an isomorphism, and \textbf{strong} if
\fc\ and \fii\ are both isomorphisms.

When drawing string diagrams, we follow~\cite{mccurdy:stripes,mccurdy:gmtd} by imagining a monoidal functor as a kind of `fluid' or `environment' in which our strings and vertices can be immersed, and we notate such immersion graphically by a pattern of dots or lines characteristic of the functor.
(We avoid the use of color, since that will be used to denote 0-cells when we come to string diagrams for bicategories in \S\ref{sec:bicategories}.)
For simplicity, we continue to label strings and vertices by the objects and morphisms in the domain category \bC, since the presence of a functor pattern indicates application of the functor to yield corresponding objects and morphisms in \bD.
Thus, for instance,
\raisebox{-2.5mm}{\begin{tikzpicture}[scale=.7]
  \draw (0,0) -- node[ed] {$M$} (0,-1);
  \path (-.5,-1) rectangle (.5,0);
\end{tikzpicture}}
denotes the object $M\in\bC$, while
\raisebox{-2.5mm}{\begin{tikzpicture}[scale=.7]
  \draw (0,0) -- node[ed] {$M$} (0,-1);
  \path[dotsF] (-.5,-1) rectangle (.5,0);
\end{tikzpicture}}
denotes the object $F(M)\in\bD$.
With this notation, the structure maps \fc\ and \fii\ are shown in \autoref{fig:monfr}, and their coherence axioms in \autoref{fig:monfr-coh}.
(The last axiom is just the naturality of the transformation \fc.)
When either of these structure maps is an isomorphism, we draw its inverse in the same way, but upside down.

\begin{figure}[tbp]
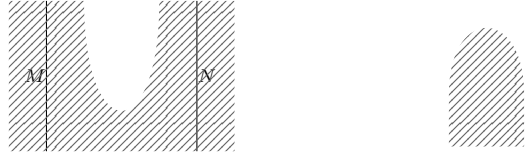

  \centering
  \begin{tabular}{m{45mm}m{45mm}}
    \begin{tikzcenter}
      \draw (1,0) -- node[ed] {$M$} (1,2);
      \draw (3,0) -- node[ed,swap] {$N$} (3,2);
      \path[dotsF] (.5,0) -- (3.5,0) -- (3.5,2) -- (2.5,2)
      to[out=-90,in=-90,looseness=5] (1.5,2) -- (0.5,2) -- cycle;
    \end{tikzcenter}
    &
    \begin{tikzcenter}
      \path[dotsF] (0,0) -- (1,0) -- ++(0,.7) to[rrel,out=90,in=90,looseness=3] (-1,0) -- cycle;
    \end{tikzcenter}
  \end{tabular}
  \caption{The constraints of a lax monoidal functor}
  \label{fig:monfr}
\end{figure}

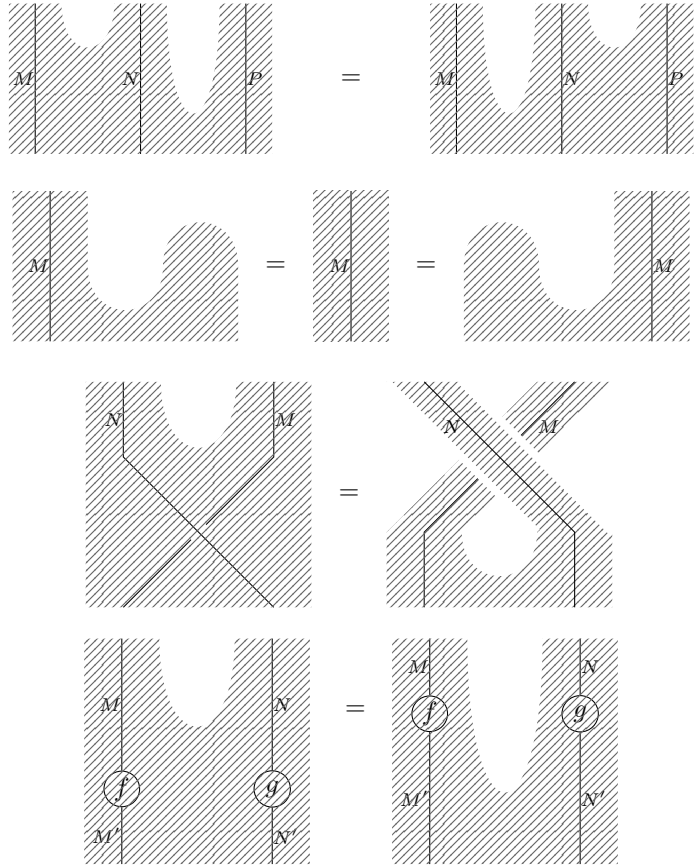
\begin{figure}[tbp]
  \centering
  \begin{tikzcenter}[xscale=.7]
    \draw (1,0) -- node[ed] {$M$} (1,2);
    \draw (3,0) -- node[ed] {$N$} (3,2);
    \draw (5,0) -- node[ed,swap] {$P$} (5,2);
    \path[dotsF] (.5,0) -- (5.5,0) -- (5.5,2) -- (4.5,2)
    to[out=-90,in=-90,looseness=5] (3.5,2) -- (2.5,2)
    to[out=-90,in=-90,looseness=2] (1.5,2) -- (0.5,2)
    -- cycle;
    \node at (7,1) {$=$};
    \begin{scope}[xshift=8cm]
      \draw (1,0) -- node[ed] {$M$} (1,2);
      \draw (3,0) -- node[ed,swap] {$N$} (3,2);
      \draw (5,0) -- node[ed,swap] {$P$} (5,2);
      \path[dotsF] (.5,0) -- (5.5,0) -- (5.5,2) -- (4.5,2)
      to[out=-90,in=-90,looseness=2] (3.5,2) -- (2.5,2)
      to[out=-90,in=-90,looseness=5] (1.5,2) -- (0.5,2)
      -- cycle;
    \end{scope}
  \end{tikzcenter}
  \bigskip
  \begin{tikzcenter}
    \draw (1,0) -- node[ed] {$M$} +(0,2);
    \path[dotsF] (.5,0) -- (3.5,0) -- (3.5,1) to[out=90,in=90,looseness=2]
    (2.5,1) to[out=-90,in=-90,looseness=2] (1.5,1) -- (1.5,2) -- (0.5,2) -- cycle;
    \node at (4,1) {$=$};
    \draw (5,0) -- node[ed] {$M$} +(0,2);
    \path[dotsF] (4.5,0) rectangle +(1,2);
    \node at (6,1) {$=$};
    \begin{scope}[xshift=10cm,x={(-1cm,0)}]
      \draw (1,0) -- node[ed,swap] {$M$} +(0,2);
      \path[dotsF] (.5,0) -- (3.5,0) -- (3.5,1) to[out=90,in=90,looseness=2]
      (2.5,1) to[out=-90,in=-90,looseness=2] (1.5,1) -- (1.5,2) -- (0.5,2) -- cycle;
    \end{scope}
  \end{tikzcenter}
  \bigskip
  \begin{tikzcenter}
    \draw (0,0) --  (2,2) -- node[ed,swap] {$M$} (2,3);
    \fill[white] (.2,2) -- (2.2,0) -- (1.8,0) -- (-.2,2) -- cycle;
    \draw (0,3) -- node[ed,swap] {$N$} (0,2) -- (2,0);
    \path[dotsF] (-.5,0) -- (2.5,0) -- (2.5,3) -- (1.5,3)
    to[out=-90,in=-90,looseness=3] (.5,3) -- (-.5,3) -- cycle;
    \node at (3,1.5) {$=$};
    \draw (4,0) -- (4,1) -- node[ed,near end,swap] {$M$} (6,3);
    \fill[white] (3.5,3) -- (5.5,1) -- (6.5,1) -- (4.5,3) -- cycle;
    \path[dotsF] (3.5,0) -- (3.5,1) -- (5.5,3) -- (6.5,3) -- (4.5,1)
    to[out=-90,in=-90,looseness=2] (5.5,1) -- (3.5,3) -- (4.5,3) -- (6.5,1)
    -- (6.5,0) -- cycle;
    \draw[line width=3pt,white] (3.5,3) -- (5.5,1);
    \draw[line width=3pt,white] (4.5,3) -- (6.5,1);
    \draw (4,3) -- node[ed,near start,swap] {$N$} (6,1) -- (6,0);
  \end{tikzcenter}
  \bigskip
  \begin{tikzpicture}
    \node[vert] (f) at (1,1) {$f$};
    \node[vert2] (g) at (3,1) {$g$};
    \draw (1,0) -- node[ed] {$M'$} (f) -- node[ed] {$M$} (1,3);
    \draw (3,0) -- node[ed,swap] {$N'$} (g) -- node[ed,swap] {$N$} (3,3);
    \path[dotsF] (.5,0) -- (3.5,0) -- (3.5,3) -- (2.5,3)
    to[out=-90,in=-90,looseness=4] (1.5,3) -- (0.5,3) -- cycle;
  \end{tikzpicture}
  \quad\raisebox{2cm}{$=$}\quad
  \begin{tikzpicture}
    \node[vert] (f) at (1,2) {$f$};
    \node[vert2] (g) at (3,2) {$g$};
    \draw (1,0) -- node[ed] {$M'$} (f) -- node[ed] {$M$} (1,3);
    \draw (3,0) -- node[ed,swap] {$N'$} (g) -- node[ed,swap] {$N$} (3,3);
    \path[dotsF] (.5,0) -- (3.5,0) -- (3.5,3) -- (2.5,3)
    to[out=-90,in=-90,looseness=7] (1.5,3) -- (0.5,3) -- cycle;
  \end{tikzpicture}
  \caption{The axioms of a lax symmetric monoidal functor}
  \label{fig:monfr-coh}
\end{figure}

Unlike the situation for monoidal categories, it seems that string diagrams for monoidal functors have not yet been formalized.
Thus, we must view their use as merely a convenient shorthand for writing out more precise proofs.

\begin{prop}\label{thm:funct-pres-dual}\label{thm:funct-pres-tr}
  Let $F\maps \bC\rightarrow \bD$ be a normal lax symmetric monoidal
  functor, let $M\in \bC$ be dualizable with dual $\rdual{M}$, and
  assume that $\fc\maps F(M)\otimes F(\rdual{M})\rightarrow
  F(M\otimes \rdual{M})$ is an isomorphism.
  \begin{enumerate}
  \item Then $F(M)$ is dualizable with dual $F(\rdual{M})$.
  \end{enumerate}

  \noindent Also assume $\fc\maps F(P) \ten F(M) \to F(P\ten M)$ is an isomorphism (as it is whenever $P=I$, since $F$ is normal).
  \begin{enumerate}[resume]

  \item Then for any map $f\maps Q\ten M
  \to M\ten P$, we have
  \begin{equation}
    F(\tr(f)) = \tr\left(\fc^{-1} \circ F(f)\circ \fc\right).\label{eq:fprestr}
  \end{equation}
  \end{enumerate}
\end{prop}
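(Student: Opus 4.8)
The plan is to produce explicit evaluation and coevaluation maps exhibiting $F(\rdual M)$ as a dual of $F(M)$, and then to verify both (i) and the trace formula in (ii) by string-diagram computations, reading the functor-region pictures of Figures~\ref{fig:monfr} and~\ref{fig:monfr-coh} as abbreviations for the coherence axioms of a lax symmetric monoidal functor together with the naturality of $\fc$ and $\fii$.

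For (i), I would take the coevaluation of $F(M)$ to be
\[ I_{\bD} \too[\fii] F(I_{\bC}) \too[F(\eta)] F(M\ten\rdual M) \too[\fc\inv] F(M)\ten F(\rdual M) \]
and the evaluation to be
\[ F(\rdual M)\ten F(M) \too[\fc] F(\rdual M\ten M) \too[F(\ep)] F(I_{\bC}) \too[\fii\inv] I_{\bD}; \]
these make sense because $\fii$ is invertible ($F$ is normal) and $\fc$ at $(M,\rdual M)$ is invertible by hypothesis. It remains to check the two triangle identities for this pair. I would draw $(\id_{F(M)}\ten\ep_{F})(\eta_{F}\ten\id_{F(M)})$ as a string diagram, expand $\eta_{F}$ and $\ep_{F}$, and use the associativity coherence of $\fc$ and the unit coherence of $\fii$ to amalgamate the two functor regions created by $F(\eta)$ and $F(\ep)$ into a single region; inside it one then sees the image under $F$ of the left-hand side of the first triangle identity for $M$ in $\bC$, which $F$ carries to $F(\id_{M})=\id_{F(M)}$, while the structure maps introduced at the merged boundary cancel with their inverses. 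The second triangle identity is handled the same way.

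For (ii), first note that $\fc\inv\circ F(f)\circ\fc$ is defined: $\fc$ at $(M,P)$ is invertible by hypothesis (since $\fc$ at $(P,M)$ is, and $F$ is symmetric), while $\fc$ at $(Q,M)$ enters only covariantly. Using the dual pair from (i) and expanding \autoref{def:twisted-comm-trace}, the composite defining $\tr(\fc\inv F(f)\fc)$ becomes a string diagram in which the vertex $f$, the fed-back $\rdual M$ strand, and the $\eta$ and $\ep$ vertices all lie inside functor regions, and the symmetry crossing is $\fs_{\bD}$. I would then push all region boundaries together, using naturality of $\fc$ and $\fii$, the associativity and unit coherence of $\fc$ and $\fii$, and the compatibility $\fc\circ\fs_{\bD}=F(\fs_{\bC})\circ\fc$ of $\fc$ with the symmetries, cancelling each internal composite of a structure map with its inverse; what is left is a single functor region containing exactly the string diagram that computes $\tr(f)$ in $\bC$ (\autoref{fig:smctrace}). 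By functoriality of $F$ this composite is $F(\tr(f))$, which is \eqref{eq:fprestr}. Since, as noted after \autoref{def:twisted-comm-trace}, the trace is independent of the chosen dual and of the choice of $\eta$ and $\ep$, nothing is lost by computing with the particular dual pair furnished by (i).

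I expect the only real work to be the bookkeeping in the ``amalgamating'' step of (i) and the ``pushing'' step of (ii): keeping track of which associativity and symmetry coherence cells are invoked so that the functor regions genuinely merge and the structure maps cancel in pairs. In the string-diagram calculus this is routine --- it amounts to deforming the picture of $F$ applied to the trace diagram into the picture of the trace of the conjugated map --- but, as observed after \autoref{fig:monfr-coh}, these diagrams for monoidal functors have not been formally justified, so strictly each deformation should be read as shorthand for a finite, mechanical sequence of applications of the axioms of Figures~\ref{fig:monfr} and~\ref{fig:monfr-coh} and the naturality of $\fc$ and $\fii$.
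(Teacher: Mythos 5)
Your proposal is correct and follows essentially the same route as the paper: the paper exhibits exactly your coevaluation $\fc^{-1}\circ F(\eta)\circ\fii$ and evaluation $\fii^{-1}\circ F(\ep)\circ\fc$ in its ``Preservation of duals'' figure, and then leaves~\eqref{eq:fprestr} to the reader as a graphical exercise using the coherence laws, naturality, and invertibility of $\fc$ and $\fii$ --- precisely the manipulations you describe. Your write-up is in fact somewhat more explicit than the paper's about which components of $\fc$ are needed and about the symmetry compatibility $\fc\circ\fs_{\bD}=F(\fs_{\bC})\circ\fc$, but there is no difference in approach.
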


The evaluation and coevaluation of $F(M)$ are shown graphically in \autoref{fig:presdual}.
Note the need for invertibility of \fii\ and of the one relevant component of \fc.

\begin{figure}[tbp]
  \centering
  \begin{tikzpicture}
    \path[use as bounding box] (0,0) rectangle (3,2.5);
    \draw (0.5,0) to[out=90,in=90,looseness=3]
    node[ed,very near start] {$M$} node[ed,very near end] {$\rdual{M}$} (2.5,0);
    \path[dotsF] (0,0) -- ++(1,0) to[rrel,out=90,in=90,looseness=3.5] (1,0) -- ++(1,0)
    to[out=90,in=90,looseness=2.5] (0,0);
  \end{tikzpicture}
  \hspace{1cm}
  \begin{tikzpicture}
    \path[use as bounding box] (0,0) rectangle (3,-2.5);
    \draw (0.5,0) to[out=-90,in=-90,looseness=3]
    node[ed,very near start,swap] {$\rdual{M}$} node[ed,very near end,swap] {$M$} (2.5,0);
    \path[dotsF,use as bounding box] (0,0) -- ++(1,0) to[rrel,out=-90,in=-90,looseness=3.5] (1,0) -- ++(1,0)
    to[out=-90,in=-90,looseness=2.5] (0,0);
  \end{tikzpicture}
  \caption{Preservation of duals}
  \label{fig:presdual}
\end{figure}
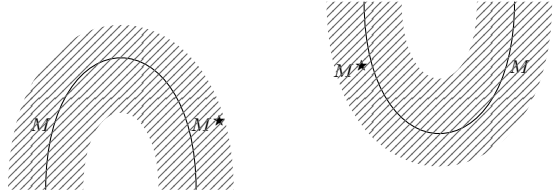

The equation~\eqref{eq:fprestr} is shown graphically in \autoref{fig:prestr}.
We leave it to the reader to prove this equation graphically, using the coherence laws drawn in \autoref{fig:monfr-coh} along with naturality (moving unrelated morphisms past each other side-by-side) and the invertibility of \fii\ and \fc.

\begin{figure}[hbt]
  \centering
  \def\drawtrace{
    \begin{pgfonlayer}{foreground}
      \node[vert] (f) at (0,0) {$f$};
      \draw (f) -- node [ed] {$Q$} +(-1,2.5);
      \draw (f) -- node [ed,near end] {$P$} +(1,-3.5) coordinate (p);
      \draw (f) to[rrel,out=75,in=-90] node [ed] {$M$} (0.4,1.5)
      node[coordinate] (a) {}
      arc (180:0:0.4) coordinate (b)
      -- ++(0,-2) coordinate (c);
    \end{pgfonlayer}
    \begin{pgfonlayer}{background}
      \draw (c) to[rrel,out=-90,in=90] (-2.5,-2.3) node[coordinate] (d) {};
    \end{pgfonlayer}
    \begin{pgfonlayer}{foreground}
      \draw (d) arc (-180:0:0.4) coordinate (e)
      to[out=90,in=-100,looseness=0.5] (f);
    \end{pgfonlayer}
  }
  \begin{tikzpicture}
    \drawtrace
    \fill[white] ($(f)+(-.2,-.5)$) -- ($(e)+(-.2,0)$) -- ++(.4,0)
    -- ($(f)+(-.1,-.5)$) -- cycle;
    \fill[white] ($(f)+(.2,-.5)$) -- ($(p)+(.2,0)$) -- ++(-.4,0)
    --  ($(f)+(.1,-.5)$) -- cycle;
    \begin{pgfonlayer}{foreground}
    \path[dotsF] (-1.5,-3.5) rectangle (1.5,2.5);
    \end{pgfonlayer}
  \end{tikzpicture}
  \quad\raisebox{3cm}{$=$}\quad
  \begin{tikzpicture}
    \drawtrace
    \fill[white] ($(f)+(-.35,-.5)$) -- ($(e)+(-.35,0)$) -- ++(.7,0)
    -- ($(f)+(.05,-.5)$) -- cycle;
    \fill[white] ($(f)+(.4,-.5)$) -- ($(p)+(.4,0)$) -- ++(-.8,0)
    --  ($(f)+(-.1,-.5)$) -- cycle;
    \begin{pgfonlayer}{foreground}
    \path[dotsF] (-1.3,2.5) -- (-.4,.5) -- (-.3,-.5) -- ($(e)+(-.2,0)$)
    to[out=-90,in=-90,looseness=1] ($(d)+(.2,0)$)
    -- ($(d)+(-.2,0)$) to[out=-90,in=-90,looseness=1.5]
    ($(e)+(.2,0)$) -- (0,-.7) -- (.8,-3.5) -- (1.2,-3.5) --
    (.3,-.5) -- (.4,.5) -- ($(a)+(.2,0)$)
    to[out=90,in=90,looseness=1] ($(b)+(-.2,0)$)
    -- ($(c)+(-.2,0)$) -- ++(.4,0)
    -- ($(b)+(.2,0)$) to[out=90,in=90,looseness=1.5] ($(a)+(-.2,0)$)
    -- (0,.8) -- (-.8,2.5) -- (-1.3,2.5);
    \end{pgfonlayer}
    \begin{pgfonlayer}{background}
      \path[dotsF] ($(c)+(-.2,0)$) .. controls +(0,-1.1) and +(0,1.4) .. ($(d)+(-.2,0)$)
      -- ($(d)+(.2,0)$) .. controls +(0,1.1) and +(0,-1.4) .. ($(c)+(.2,0)$);
    \end{pgfonlayer}
  \end{tikzpicture}
  \caption{Preservation of traces}
  \label{fig:prestr}
\end{figure}
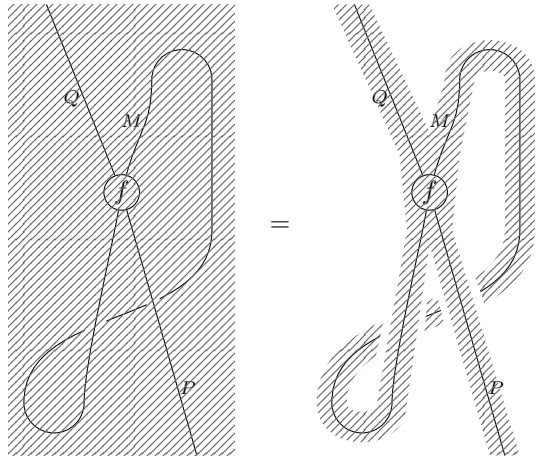

\begin{eg}\label{eg:lefschetz}
As mentioned in \S\ref{sec:intro}, \autoref{thm:funct-pres-tr} implies the Lefschetz fixed point theorem, by the following argument.
The rational chain complex functor is a strong symmetric
monoidal functor from the stable homotopy category to the derived category of $\mathbb{Q}$.
Composing this functor with the homology functor, which is strong symmetric monoidal
by the K\"{u}nneth theorem, gives a strong symmetric monoidal functor
to graded vector spaces.  Applying the proposition to this composite of 
functors identifies the Lefschetz number with the fixed point index.
\end{eg}

\begin{eg}
A simpler example is given by extension of scalars.
If $R$ and $S$ are commutative rings and $\psi\colon R\rightarrow S$ is a 
ring homomorphism, extension of scalars is a strong symmetric monodal 
functor from $R$-modules to $S$-modules.  If $f\colon Q\otimes M\rightarrow
M\otimes P$ is a map of $R$-modules, this proposition implies
$\tr(f\otimes_RS)=\tr(f)\otimes_RS$.
\end{eg}

In addition to this ``functoriality,'' the symmetric monoidal trace also satisfies a sort of ``2-functoriality.''
Recall that if $F,G\maps \bC\to\bD$ are lax symmetric monoidal functors, a
\textbf{monoidal natural transformation} is a natural transformation
$\al\maps F\to G$ which is compatible with the monoidal constraints of
$F$ and $G$ in an evident way.
Graphically, we draw a monoidal natural transformation as a `membrane' or `interface' between regions denoting the two functors, as shown in Figure~\ref{fig:montransdata}.
Note that the morphism $\alpha_{M}\colon F(M) \to G(M)$ is not explicitly pictured as a node, but rather implied as the $M$-string passes through the $\alpha$ membrane.

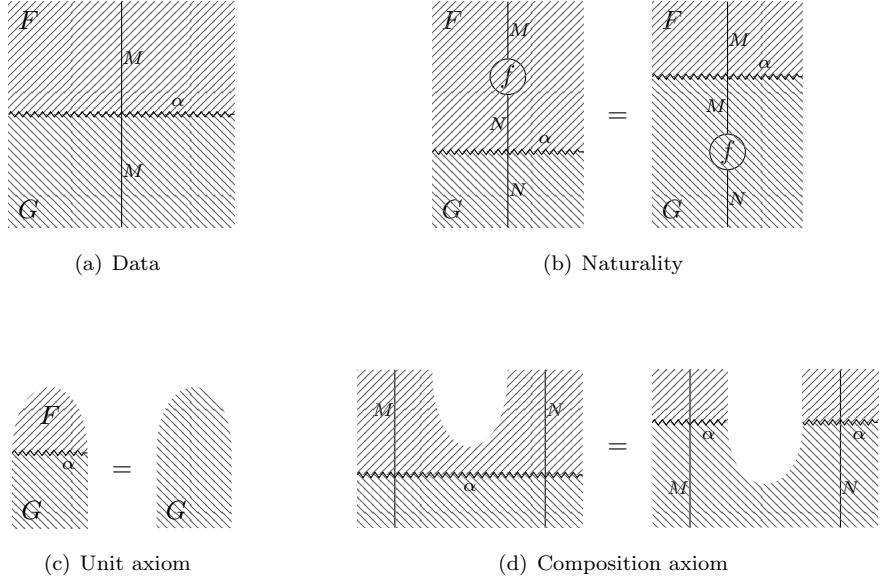
\begin{figure}[hbt]
  \begin{center}
  \renewcommand{\subfigcapskip}{10pt}
  \begin{tabular}{c@{\hspace{1.5cm}}c}
    \subfigure[Data]{\label{fig:montransdata}
  \begin{tikzpicture}[scale=1.5]
    \path[dotsF] (0,1) rectangle (2,2);
    \path[dotsG] (0,0) rectangle (2,1);
    \coordinate (a) at (1,1);
    \node[anchor=north west] at (0,2) {$F$};
    \node[anchor=south west] at (0,0) {$G$};
    \draw (1,2) -- node[ed] {$M$} (a) -- node[ed] {$M$} (1,0);
    \draw[transf] (0,1) -- (a) -- node[ed,above=3pt] {$\alpha$} (2,1);
  \end{tikzpicture}}
  &
  \subfigure[Naturality]{
  \begin{tikzpicture}
    \coordinate (a) at (1,1);
    \node[anchor=north west] at (0,3) {$F$};
    \node[anchor=south west] at (0,0) {$G$};
    \node[vert] (f) at (1,2) {$f$};
    \path[dotsF] (0,1) rectangle (2,3);
    \path[dotsG] (0,0) rectangle (2,1);
    \draw (1,3) -- node[ed] {$M$} (f) -- node[ed,swap] {$N$} (a) -- node[ed] {$N$} (1,0);
    \draw[transf] (0,1) -- (a) -- node[ed,above=3pt] {$\alpha$} (2,1);
  \end{tikzpicture}
  \hspace{.2cm}\raisebox{1.4cm}{$=$}\hspace{.2cm}
  \begin{tikzpicture}
    \coordinate (a) at (1,2);
    \node[anchor=north west] at (0,3) {$F$};
    \node[anchor=south west] at (0,0) {$G$};
    \node[vert] (f) at (1,1) {$f$};
    \path[dotsF] (0,2) rectangle (2,3);
    \path[dotsG] (0,0) rectangle (2,2);
    \draw (1,3) -- node[ed] {$M$} (a) -- node[ed,swap] {$M$} (f) -- node[ed] {$N$} (1,0);
    \draw[transf] (0,2) -- (a) -- node[ed,above=3pt] {$\alpha$} (2,2);
  \end{tikzpicture}}
  \\\\\\
  \subfigure[Unit axiom]{
  \begin{tikzpicture}
    \path[dotsG] (0,0) rectangle (1,1);
    \path[dotsF] (0,1) to[out=90,in=90,looseness=3] (1,1) -- (0,1);
    \draw[transf] (0,1) -- node[ed,near end,below=3pt] {$\alpha$} (1,1);
    \node at (.5,1.5) {$F$};
    \node[anchor=south west] at (0,0) {$G$};
  \end{tikzpicture}
  \hspace{.2cm}\raisebox{.7cm}{$=$}\hspace{.2cm}
  \begin{tikzpicture}
    \path[dotsG] (0,0) -- (0,1) to[rrel,out=90,in=90,looseness=3] (1,0) -- (1,0) -- cycle;
    \node[anchor=south west] at (0,0) {$G$};
  \end{tikzpicture}}
  &
  \subfigure[Composition axiom]{
  \begin{tikzpicture}[yscale=.7]
    \draw (1,-1) -- node[ed,near end] {$M$} (1,2);
    \draw (3,-1) -- node[ed,near end,swap] {$N$} (3,2);
    \path[dotsF] (.5,0) -- (3.5,0) -- (3.5,2) -- (2.5,2)
    to[out=-90,in=-90,looseness=5] (1.5,2) -- (0.5,2) -- cycle;
    \path[dotsG] (.5,-1) rectangle (3.5,0);
    \draw[transf] (.5,0) -- node[ed,below=3pt] {$\alpha$} (3.5,0);
  \end{tikzpicture}
  \hspace{.2cm}\raisebox{1cm}{$=$}\hspace{.2cm}
  \begin{tikzpicture}[yscale=.7]
    \draw (1,-1) -- node[ed,near start] {$M$} (1,2);
    \draw (3,-1) -- node[ed,near start,swap] {$N$} (3,2);
    \path[dotsG] (.5,-1) -- (3.5,-1) -- (3.5,1) -- (2.5,1)
    to[out=-90,in=-90,looseness=4] (1.5,1) -- (0.5,1) -- cycle;
    \path[dotsF] (.5,1) rectangle (1.5,2);
    \path[dotsF] (2.5,1) rectangle (3.5,2);
    \draw[transf] (.5,1) -- node[ed,below=3pt,near end] {$\alpha$} (1.5,1)
    (2.5,1) -- node[ed,below=3pt,near end] {$\alpha$} (3.5,1);
  \end{tikzpicture}}
  \end{tabular}\end{center}
  \caption{A monoidal natural transformation and its axioms}
  \label{fig:montrans}\label{fig:montransax}
\end{figure}

\begin{prop}\label{thm:trans-pres-tr}
  Let $F,G\maps \bC\to\bD$ be normal lax symmetric monoidal functors,
  let $\al\maps F\to G$ be a monoidal natural transformation, let $M$ be
  dualizable in \bC, and assume that $F$ and $G$ satisfy the
  hypotheses of \autoref{thm:funct-pres-tr}.
  Then
  \begin{enumerate}
  \item $\al_M\maps F(M)\to G(M)$ is an isomorphism, whose inverse is the mate
  of $\al_{\rdual{M}}$, and\label{item:tpt1}
  \item For any $f\maps Q\ten M \to M\ten P$, the following square commutes.\label{item:tpt2}
  \[\xymatrix@C=3pc{F(Q) \ar[rr]^{\tr\left(\fc^{-1} \circ F(f)\circ \fc\right)}
    \ar[d]_{\al_Q} && F(P) \ar[d]^{\al_P}\\
    G(Q) \ar[rr]_{\tr\left(\fc^{-1} \circ G(f)\circ \fc\right)} && G(P)}
  \]
  \item In particular, for $f\maps M\to M$, we have $\tr(F(f)) = \tr(G(f))$.\label{item:tpt3}
  \end{enumerate}
\end{prop}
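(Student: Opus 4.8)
The plan is to reduce all three parts to a single fact: a monoidal natural transformation automatically respects the duality data that \autoref{thm:funct-pres-dual} installs on $F(M)$ and $G(M)$.  Write $\eta_F\maps I\to F(M)\ten F(\rdual{M})$ and $\ep_F\maps F(\rdual{M})\ten F(M)\to I$ for the coevaluation and evaluation of $F(M)$ constructed there (so $\eta_F=\fc^{-1}F(\eta)\fii$ and $\ep_F=\fii^{-1}F(\ep)\fc$, as drawn in \autoref{fig:presdual}), and $\eta_G,\ep_G$ for those of $G(M)$.  First I would establish the two identities
\begin{equation*}
  \eta_G=(\al_M\ten\al_{\rdual{M}})\circ\eta_F
  \qquad\text{and}\qquad
  \ep_F=\ep_G\circ(\al_{\rdual{M}}\ten\al_M).
\end{equation*}
Each is a short diagram chase: substitute the formulas above, slide $\al$ inward past $F(\eta)$ (respectively $F(\ep)$) by naturality of $\al$, and then invoke the compatibility of $\al$ with $\fc$ and $\fii$ --- i.e.\ the monoidal natural transformation axioms of \autoref{fig:montransax}.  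Graphically these identities just say the $\al$-membrane slides freely past the $\eta$- and $\ep$-bends of \autoref{fig:presdual}.  Since one may take $\rdual{(\rdual{M})}=M$, the same two identities hold with $\rdual{M}$ in place of $M$.

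For part (i), let $\mu\maps G(M)\to F(M)$ be the mate of $\al_{\rdual{M}}\maps F(\rdual{M})\to G(\rdual{M})$, viewing $F(\rdual{M})$ and $G(\rdual{M})$ as dualizable with duals $F(M)$ and $G(M)$ (as in \autoref{thm:funct-pres-dual} applied to $\rdual{M}$); explicitly, $\mu$ tensors in the coevaluation $\eta_{F(\rdual{M})}$, applies $\al_{\rdual{M}}$ to the resulting $F(\rdual{M})$-factor, and then evaluates with $\ep_{G(\rdual{M})}$.  Substituting the second displayed identity, taken with $\rdual{M}$ for $M$, into $\mu\circ\al_M$ collapses it to $(\ep_{F(\rdual{M})}\ten\id)\circ(\id\ten\eta_{F(\rdual{M})})$, which is a triangle identity and hence equals $\id_{F(M)}$; substituting the first displayed identity, again with $\rdual{M}$ for $M$, into $\al_M\circ\mu$ collapses it to $(\ep_{G(\rdual{M})}\ten\id)\circ(\id\ten\eta_{G(\rdual{M})})=\id_{G(M)}$.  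Thus $\al_M$ is an isomorphism with inverse $\mu$, which gives part (i).

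For part (ii), I would write out both legs of the square via \autoref{def:twisted-comm-trace} and push the single occurrence of $\al$ in $\al_P\circ\tr(\fc^{-1}F(f)\fc)$ upward through the composite: across $\ep_F\ten\id$ using the second displayed identity (which replaces $\al_P$ by $\al_{\rdual{M}}\ten\al_M\ten\al_P$), across the symmetry by naturality of $\fs$, across $(\fc^{-1}F(f)\fc)\ten\id$ by naturality of $\al$ together with its compatibility with $\fc$ (which jointly give $(\al_M\ten\al_P)\circ\fc^{-1}F(f)\fc=\fc^{-1}G(f)\fc\circ(\al_Q\ten\al_M)$), and finally across $\id\ten\eta_F$ using the first displayed identity, which reabsorbs $\al_M\ten\al_{\rdual{M}}$ into $\eta_G$ and leaves only $\al_Q$.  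What remains is exactly $\tr(\fc^{-1}G(f)\fc)\circ\al_Q$, so the square commutes.  For part (iii), specialize to $Q=P=I$: normality of $F$ and $G$ makes $\al_I=\fii_G\circ\fii_F^{-1}$ an isomorphism, so the square of part (ii) exhibits $\tr(\fc^{-1}G(f)\fc)$ and $\tr(\fc^{-1}F(f)\fc)$ as $\al_I$-conjugate.  By part (ii) of \autoref{thm:funct-pres-tr} these are $G(\tr(f))$ and $F(\tr(f))$; identifying $\tr(F(f))$ with $\fii_F^{-1}\circ F(\tr(f))\circ\fii_F$ and $\tr(G(f))$ with $\fii_G^{-1}\circ G(\tr(f))\circ\fii_G$ (using naturality of $\fc$ and that $F$ and $G$ preserve $\fs$), and then invoking the naturality square of $\al$ at $\tr(f)\maps I\to I$, one obtains $\tr(F(f))=\tr(G(f))$.

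The only genuinely substantive point is the first step --- verifying that a monoidal natural transformation automatically commutes with the induced dualizations --- together with the bookkeeping in part (ii) that the $\al$-factors created as $\al$ crosses $\ep_F$ are precisely those consumed as it crosses $\eta_F$; this matches up because $\eta_F$ and $\ep_F$ each occur exactly once in the trace.  Everything else is formal.  As the surrounding text cautions, string diagrams for monoidal functors and their transformations have not yet been formally justified, so the pictorial versions of these arguments (sliding the $\al$-membrane through the bends of \autoref{fig:presdual} and \autoref{fig:smctrace}) should be read as shorthand for the commutative-diagram chases sketched above.
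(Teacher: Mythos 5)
Your proposal is correct and matches the paper's (very terse) argument: part (i) is exactly the ``duals invert'' computation from the monoidal-transformation axioms plus a triangle identity, part (ii) is the direct proof by successive applications of those axioms that the paper draws in Figure~\ref{fig:transftr}, and part (iii) uses the invertibility of $\al_I$ coming from normality. The only thing worth adding is that for part (ii) the paper also records a shortcut you only exploit in part (iii): by \autoref{thm:funct-pres-tr} the two horizontal maps are $F(\tr(f))$ and $G(\tr(f))$, so the square is just the naturality square of $\al$ at $\tr(f)\maps Q\to P$ --- though, as the paper notes, it is your direct argument rather than this shortcut that survives to the bicategorical setting.
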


\autoref{thm:trans-pres-tr}\ref{item:tpt1} is known colloquially as ``duals invert.''
Half of it is pictured in Figure~\ref{fig:duals-invert}; it follows from the axioms in \autoref{fig:montransax} and a triangle identity.
The other half is of course similar.
(This is one of the examples of string diagram calculus for monoidal functors given in~\cite{mccurdy:stripes}.)

A graphical picture of \autoref{thm:trans-pres-tr}\ref{item:tpt2} is shown in Figure~\ref{fig:transftr}.
It is easy to give a direct proof of this equality by successive applications of the axioms in \autoref{fig:montransax}.
In fact, there is an even easier proof: application of \autoref{thm:funct-pres-tr} to $F$ and $G$ reduces the desired statement to naturality of $\alpha$.
However, this easy proof will no longer be available in the bicategorical case.

Finally, \autoref{thm:trans-pres-tr}\ref{item:tpt3} follows because when $F$ and $G$ are normal, the component $\alpha_{I}$ at the unit object is always an isomorphism.

\begin{figure}[hbt]
  \centering
  \subfigure[Duals Invert]{\label{fig:duals-invert}
  \begin{tikzpicture}[scale=.6]
    \draw (0,0) coordinate (bot)
    to[out=90,in=100,looseness=2]
    node [ed,near start] {$M$}
    node [ed,very near end] {$\rdual{M}$}
    (2,3) coordinate (mid)
    to[out=-80,in=-90] node[ed,swap,pos=.9] {$M$}
    (4,6) coordinate (midtop) -- ++(0,2) coordinate (top);
    \path[dotsF] (bot) -- ++(-.4,0) to[out=90,in=100,looseness=2.2] ($(mid)+(.4,0)$)
    -- ++(-.8,0) to[out=100,in=90,looseness=1.3] ($(bot)+(.4,0)$) -- cycle;
    \path[dotsG] (mid) -- ++(.4,0) to[out=-80,in=-90,looseness=1.3] ($(midtop)+(-.4,0)$)
    -- ++(.8,0) to[out=-90,in=-80,looseness=2.2] ($(mid)+(-.4,0)$) -- cycle;
    \path[dotsF] (midtop) ++(-.4,0) rectangle ++(.8,2);
    \draw[transf] (mid) ++(-.4,0) -- ++(.8,0);
    \draw[transf] (midtop) ++(-.4,0) -- ++(.8,0);

    \node at (5.2,4) {$=$};

    \draw (6.6,0) -- node[ed,swap,near end] {$M$} ++(0,8);
    \path[dotsF] (6.2,0) rectangle ++(.8,8);
  \end{tikzpicture}
}\hspace{1.5cm}
\subfigure[Preservation of trace]{\label{fig:transftr}
  \begin{tikzpicture}[scale=.7]
    \node[vert] (f') at (0,0) {$f$};
    \draw (f') -- node [ed] {$Q$} +(-1,2.5);
    \draw (f') -- node [ed,near end] {$P$} ++(1,-3.5) -- ++(0,-1);
    \draw (f') to[rrel,out=75,in=-90] node [ed] {$M$} (0.4,1.5) node (a) {}
    arc (180:0:0.4) coordinate (b)
    -- ++(0,-2) coordinate (c)
    to[rrel,out=-90,in=90] (-2.5,-2.3) node (d) {}
    arc (-180:0:0.4) coordinate (e)
    to[out=90,in=-100,looseness=0.5] (f');
      \path[dotsF] (-1.3,2.5) -- (-.4,.5) -- (-.3,-.5) -- ($(e)+(-.2,0)$)
      to[out=-90,in=-90,looseness=1] ($(d)+(.2,0)$)
      -- ($(d)+(-.2,0)$) to[out=-90,in=-90,looseness=1.5]
      ($(e)+(.2,0)$) -- (0,-.7) -- (.8,-3.5) -- (1.2,-3.5) --
      (.3,-.5) -- (.4,.5) -- ($(a)+(.2,0)$)
      to[out=90,in=90,looseness=1] ($(b)+(-.2,0)$)
      -- ($(c)+(-.2,0)$) .. controls +(0,-1.1) and +(0,1.4) .. ($(d)+(-.2,0)$)
      -- ($(d)+(.2,0)$) .. controls +(0,1.1) and +(0,-1.4) .. ($(c)+(.2,0)$)
      -- ($(b)+(.2,0)$) to[out=90,in=90,looseness=1.5] ($(a)+(-.2,0)$)
      -- (0,.8) -- (-.8,2.5) -- (-1.3,2.5);
      \path[dotsG] (.8,-4.5) rectangle (1.2,-3.5);
    \draw[transf] (.8,-3.5) -- +(0.4,0);
  \end{tikzpicture}
  \quad\raisebox{2cm}{$=$}\quad
  \begin{tikzpicture}[scale=.7]
    \node[vert] (f') at (0,0) {$f$};
    \draw (f') -- node [ed] {$Q$} ++(-1,2.5) -- ++(0,1);
    \draw (f') -- node [ed,near end] {$P$} ++(1,-3.5);
    \draw (f') to[rrel,out=75,in=-90] node [ed] {$M$} (0.4,1.5) node (a) {}
    arc (180:0:0.4) coordinate (b)
    -- ++(0,-2) coordinate (c)
    to[rrel,out=-90,in=90] (-2.5,-2.3) node (d) {}
    arc (-180:0:0.4) coordinate (e)
    to[out=90,in=-100,looseness=0.5] (f');
      \path[dotsG] (-1.3,2.5) -- (-.4,.5) -- (-.3,-.5) -- ($(e)+(-.2,0)$)
      to[out=-90,in=-90,looseness=1] ($(d)+(.2,0)$)
      -- ($(d)+(-.2,0)$) to[out=-90,in=-90,looseness=1.5]
      ($(e)+(.2,0)$) -- (0,-.7) -- (.8,-3.5) -- (1.2,-3.5) --
      (.3,-.5) -- (.4,.5) -- ($(a)+(.2,0)$)
      to[out=90,in=90,looseness=1] ($(b)+(-.2,0)$)
      -- ($(c)+(-.2,0)$) .. controls +(0,-1.1) and +(0,1.4) .. ($(d)+(-.2,0)$)
      -- ($(d)+(.2,0)$) .. controls +(0,1.1) and +(0,-1.4) .. ($(c)+(.2,0)$)
      -- ($(b)+(.2,0)$) to[out=90,in=90,looseness=1.5] ($(a)+(-.2,0)$)
      -- (0,.8) -- (-.8,2.5) -- (-1.3,2.5);
      \path[dotsF] (-.8,2.5) rectangle (-1.3,3.5);
    \draw[transf] (-.8,2.5) -- +(-0.5,0);
  \end{tikzpicture}}
  \caption{2-functoriality of traces}
\end{figure}
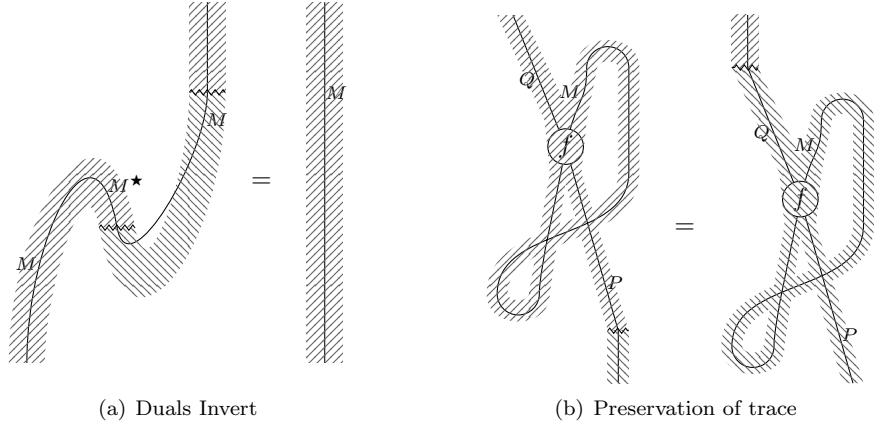

\begin{eg}\label{eg:intrat-lefschetz}
Extension of scalars along the inclusion $\iota\colon \bbZ\rightarrow \bbQ$
defines a natural transformation from the functor
$(H_*(-;\bbZ)/\mathrm{Torsion})\otimes \bbQ$ to the functor $H_*(-;\bbQ)$.
(We quotient $H_*(-;\bbZ)$ by torsion in order to make it strong symmetric monoidal.)
\autoref{thm:trans-pres-tr} then implies the familiar fact that the Lefschetz number computed using 
$H_*(-;\bbZ)/\mathrm{Torsion}$ is the same as that
computed using $H_*(-;\bbQ)$.
An analogous argument works for Lefschetz numbers computed at the level of chain complexes, in which case quotienting by torsion is no longer necessary since the chain complexes consist of free abelian groups.
\end{eg}

\section{Bicategories}
\label{sec:bicategories}

As suggested in the introduction, sometimes we also need a notion of
trace in non-commutative
situations, where we don't even have a monoidal category, let alone a
symmetric one.  For example, if $R$ is a non-commutative ring, then
there is no monoidal structure on the category of $R$-modules.  What
there is, however, is a tensor product of a right $R$-module with a
left $R$-module, which is a special case of the tensor product of
bimodules.
An appropriate categorical context for this is a \emph{bicategory}, originally due to~\cite{benabou:bicats}.

\begin{defn}
  A \textbf{bicategory} \sB\ consists of the following.
  \begin{itemize}
  \item A collection of \emph{objects} or \emph{0-cells} $R,S,T,\dots$.
  \item For each pair of objects, a category $\sB(R,S)$.
  \item For each object, a \emph{unit} $U_R\in \sB(R,R)$.
  \item For each triple of objects, a \emph{composition} functor
    \[\odot\maps \sB(R,S)\times\sB(S,T)\too \sB(R,T).\]
  \item Natural isomorphisms
    \begin{align*}
      \fa\maps M\odot (N\odot P) &\too[\iso] (M\odot N)\odot P\\
      \fl\maps U_R \odot M &\too[\iso] M\\
      \fr\maps M\odot U_S &\too[\iso] M
    \end{align*}
    satisfying the same coherence axioms as for a monoidal category.
  \end{itemize}
\end{defn}

We call the objects of $\sB(R,S)$ \emph{1-cells} and its morphisms \emph{2-cells}.
We regard 1-cells and 2-cells in a bicategory as analogous to the objects and morphisms in a monoidal category, respectively, with the 0-cells playing a ``bookkeeping'' role informing us which 1-cells can be tensored with which others.
We often write a 1-cell $M\in\sB(R,S)$ as $M\maps R\hto S$ to distinguish it from a 2-cell $f\maps M\to N$.

\begin{eg}
  There is a bicategory \calMod\ whose objects are
  not-necessarily-com\-mutative rings, and such that $\calMod(R,S)$ is
  the category of $R$-$S$-bimodules.  The unit $U_R$ is $R$ regarded
  as an $R$-$R$-bimodule, and the composition $\odot$ is the tensor
  product of bimodules, $M\odot N = M\ten_S N$.
\end{eg}

\begin{eg}
  If \bC\ is a monoidal category, we have a bicategory $\bccat{\bC}$ with
  one object $\star$ and $(\bccat{\bC})(\star,\star)=\bC$, $U_\star = I$, and
  $\odot=\ten$.
  In this sense, one can think of a bicategory as ``a monoidal category with many objects''.
\end{eg}

\begin{rmk}
  Our naming convention for bicategories is a little unusual: to
  clarify the discussion of later examples, we have chosen to use
  names which indicate both the objects \emph{and} the 1-cells.  Thus,
  for example, the name \calMod, which we read as ``modules over
  rings,'' indicates that the objects are rings and the 1-cells are
  modules, while the name $\bccat{\bC}$ indicates that there is a unique
  object $\star$ and the 1-cells are the objects of \bC.
\end{rmk}

\begin{rmk}\label{rmk:comp-order}
  We have chosen to write the composite of 1-cells $M\maps R\hto S$
  and $N\maps S\hto T$ in a bicategory as $M\odot N\maps R\hto T$, rather than $N\odot
  M$.  This is called \emph{diagrammatic order} and makes sense for
  bicategories such as \calMod; if we had chosen the other order we
  would have to define $\calMod(R,S)$ to be the category of
  $S$-$R$-bimodules in order to have $M\odot N = M\ten_S N$.

  There are other sorts of bicategories, however, for which the
  opposite choice makes more sense.  For instance, there is a
  bicategory \Cat\ whose objects are categories, whose 1-cells are
  functors, and whose 2-cells are natural transformations; in this
  case it makes more sense to write the composite of functors $F\maps
  A\to B$ and $G\maps B\to C$ as $G \circ F\maps A\to C$.
\end{rmk}

Finally, string diagrams in a bicategory are also obtained by Poincar\'e duality, but one dimension up.
Now 0-cells are represented by 2-dimensional regions (in color or shading, depending on whether the reader is fortunate enough to be reading this paper in color), 1-cells are represented by strings, and 2-cells are represented by vertices; see \autoref{fig:string-bicat}.
Note that since 1-cells and 2-cells in a bicategory are analogous to objects and morphisms in a monoidal category, respectively, we can regard bicategorical string diagrams as obtained from those for monoidal categories by adding 0-cell labels to the 2-dimensional regions.
However, the strings can no longer be crossed over each other, since there is no ``symmetry'' isomorphism in a bicategory.

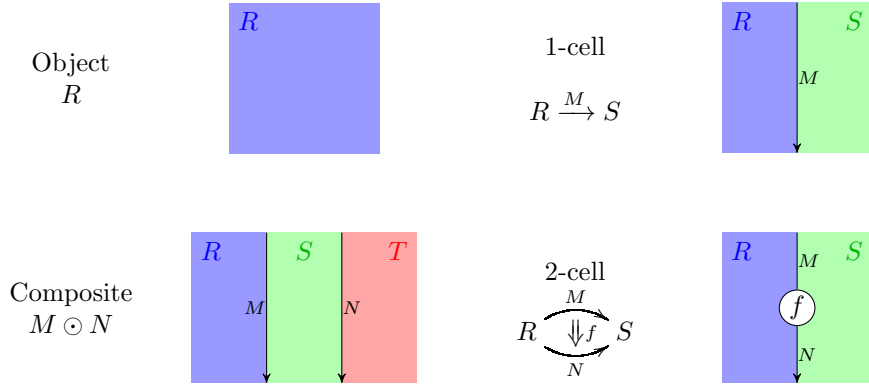
\begin{figure}[tb]
  \centering
  \begin{tabular}{m{17mm}m{38mm}m{27mm}m{25mm}}
    \begin{center}Object\\$R$\end{center} &
    \begin{tikzcenter}
      \fill[bluefill] (0,0) rectangle (2,2);
      \node[anchor=north west,blue] at (0,2) {$R$};
    \end{tikzcenter}
    &
    \begin{center}1-cell\\\ \\$R\xrightarrow{M} S$\end{center} &
    \begin{tikzcenter}
      \fill[bluefill] (0,0) rectangle (1,2);
      \fill[greenfill] (1,0) rectangle (2,2);
      \node[anchor=north west,blue] at (0,2) {$R$};
      \node[anchor=north east,green!70!black] at (2,2) {$S$};
      \draw[->] (1,2) -- node[ed] {$M$} +(0,-2);
    \end{tikzcenter}
    \\\\
    \begin{center}Composite\\$M\odot N$\end{center} &
    \begin{tikzcenter}
      \fill[bluefill] (0,0) rectangle (1,2);
      \fill[greenfill] (1,0) rectangle (2,2);
      \fill[redfill] (2,0) rectangle (3,2);
      \node[anchor=north west,blue] at (0,2) {$R$};
      \node[anchor=north,green!70!black] at (1.5,2) {$S$};
      \node[anchor=north east,red] at (3,2) {$T$};
      \draw[->] (1,2) -- node[ed,swap] {$M$} +(0,-2);
      \draw[->] (2,2) -- node[ed] {$N$} +(0,-2);
    \end{tikzcenter}
    &
    \begin{center}2-cell\\$\xymatrix{R\rtwocell^{M}_N{f} & S}$\end{center} &
    \begin{tikzcenter}
      \fill[bluefill] (0,0) rectangle (1,2);
      \fill[greenfill] (1,0) rectangle (2,2);
      \node[anchor=north west,blue] at (0,2) {$R$};
      \node[anchor=north east,green!70!black] at (2,2) {$S$};
      \node[fill=white,draw,circle,inner sep=1pt] (f) at (1,1) {$f$};
      \draw[->] (1,2) -- node[ed] {$M$} (f) -- node[ed] {$N$} +(0,-1);
    \end{tikzcenter}
  \end{tabular}
  \caption{String diagrams for bicategories}
  \label{fig:string-bicat}
\end{figure}

\section{Shadows}
\label{sec:shadows}

We will give further examples of bicategories shortly, but first we introduce the additional structure that we will need in order to define traces.
Extra structure is necessary because in defining the symmetric monoidal trace, we used the symmetry isomorphism $M\ten \rdual{M} \iso \rdual{M}\ten M$, whereas for 1-cells $M\maps R\hto S$ and $N\maps S\hto R$ in a bicategory, we cannot even ask whether $M\odot N$ and $N\odot M$ are isomorphic, since they are objects of different categories: one is a 1-cell $R\hto R$ and the other is a 1-cell $S\hto S$.
For example, if $R$ is a noncommutative ring, $M$ is a left $R$-module, and $N$ is a right $R$-module, then $M\odot N$ is an $R$-$R$-bimodule, while $N\odot M$ is just an abelian group.

In this latter example, there is a naive way to compare the two: if we quotient out the $R$-$R$-bimodule structure on $M\odot N$, we obtain an abelian group, which is in fact isomorphic to $N\odot M$.
This approach turns out to be surprisingly effective, and moreover many bicategories admit a similar sort of ``quotienting'' operation, as we will see.
Thus, we encapsulate the important properties of such an operation in the following abstract definition.

\begin{defn}[\cite{kate:traces}]\label{def:shadow}
  Let \sB\ be a bicategory.  A \textbf{shadow functor} for \sB\
  consists of functors
  \[\bigsh{-}\maps \sB(R,R) \to \bT\]
  for each object $R$ of \sB\ and some fixed category \bT, equipped
  with a natural isomorphism
  \[\theta\maps \sh{M\odot N}\too[\iso] \sh{N\odot M}\]
  for $M\maps R\hto S$ and $N\maps S\hto R$ such that the following
  diagrams commute whenever they make sense:
  \[\xymatrix{\bigsh{(M\odot N)\odot P} \ar[r]^\theta \ar[d]_{\sh{\fa}} &
    \bigsh{P \odot (M\odot N)} \ar[r]^{\sh{\fa}} &
    \bigsh{(P\odot M) \odot N}\\
    \bigsh{M\odot (N\odot P)} \ar[r]^\theta & \bigsh{(N\odot P)
      \odot M} \ar[r]^{\sh{\fa}} & \bigsh{N\odot (P\odot
      M)}\ar[u]_\theta }\]
  \[\xymatrix{\bigsh{M\odot U_R} \ar[r]^\theta \ar[dr]_{\sh{\fr}} &
    \bigsh{U_R\odot M} \ar[d]^{\sh{\fl}} \ar[r]^\theta &
    \bigsh{M\odot U_R} \ar[dl]^{\sh{\fr}}\\
    &\bigsh{M}}\]
\end{defn}

\begin{rmk}\label{thm:not-braiding}
  The above hexagon is \emph{not} one of the hexagon axioms for a
  braided monoidal category.  In fact, if $\theta$ were viewed as a
  ``braiding'', the two sides of this hexagon would describe unequal
  braids.  The shadow axioms describe a ``cyclic'' operation, rather
  than a ``linear'' one that happens to have transpositions.
\end{rmk}

It may seem that we should require $\theta^2=1$, but in fact this is
automatic:

\begin{prop}\label{shadow-symm}
  If $\sh{-}$ is a shadow functor on \sB, then the composite
  \[\xymatrix{\bigsh{M\odot N} \ar[r]^\theta &
    \bigsh{N\odot M} \ar[r]^\theta &
    \bigsh{M\odot N}}\]
  is the identity.
\end{prop}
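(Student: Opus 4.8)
The plan is to obtain $\theta^2=\id$ as a special case of the hexagon axiom, by feeding a unit 1-cell into one of its three slots and simplifying the result with the unit triangle axiom and the naturality of $\theta$. Concretely, fix $M\maps R\hto S$ and $N\maps S\hto R$, and apply the hexagon to the triple $(M,N,U_R)$. The three instances of $\theta$ occurring in it are then $\theta_{M\odot N,\,U_R}\maps\sh{(M\odot N)\odot U_R}\to\sh{U_R\odot(M\odot N)}$, $\theta_{M,\,N\odot U_R}\maps\sh{M\odot(N\odot U_R)}\to\sh{(N\odot U_R)\odot M}$, and $\theta_{N,\,U_R\odot M}\maps\sh{N\odot(U_R\odot M)}\to\sh{(U_R\odot M)\odot N}$.

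I would then analyze each of these. The first, $\theta_{M\odot N,U_R}$, has a unit in its second argument, so the unit triangle axiom (applied with its ``$M$'' taken to be the endo-1-cell $M\odot N$) gives $\sh{\fl}\circ\theta_{M\odot N,U_R}=\sh{\fr}$; since $\sh{-}$, being a functor, sends the invertible $\fl$ and $\fr$ to invertible maps, this says that $\theta_{M\odot N,U_R}$ is precisely the canonical comparison isomorphism between $\sh{(M\odot N)\odot U_R}$ and $\sh{U_R\odot(M\odot N)}$ built from $\fr$ and $\fl$. The other two are handled by naturality of $\theta$ in its arguments along the unitors $\fr\maps N\odot U_R\iso N$ and $\fl\maps U_R\odot M\iso M$: up to pre- and post-composition with the maps that $\sh{-}$ assigns to associators and unitors, $\theta_{M,N\odot U_R}$ is $\theta_{M,N}\maps\sh{M\odot N}\to\sh{N\odot M}$ and $\theta_{N,U_R\odot M}$ is $\theta_{N,M}\maps\sh{N\odot M}\to\sh{M\odot N}$. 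Substituting these identifications into the hexagon and transporting everything back to $\sh{M\odot N}$ and $\sh{N\odot M}$ along the canonical isomorphisms, the hexagon becomes an equation one side of which is a composite of $\sh{-}$-images of associators and unitors, and the other side of which is $\theta_{N,M}\circ\theta_{M,N}$ pre- and post-composed with such maps. Because $\sh{-}$ is a functor it carries the commuting coherence diagrams of $\sB$ (Mac Lane coherence, in particular the triangle relating $\fa$, $\fl$, $\fr$) to commuting diagrams in $\bT$, and these collapse the structural part to the identity, leaving $\theta_{N,M}\circ\theta_{M,N}=\id_{\sh{M\odot N}}$, which is the assertion.

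I expect the only real obstacle to be this last bookkeeping step: one must record, with correct orientations, exactly which associators and unitors are introduced by each naturality step and by the unit axiom, and then recognize the leftover diagram as one that commutes by coherence -- it is easy to drop an $\fa$ or to invert an $\fl$. A more intuitive (if, at this point in the paper, less self-contained) way to see the statement is via the cylindrical string diagrams introduced below: there $\theta$ is realized by sliding the 1-cell strings once around the cylinder, so $\theta^2$ slides every string all the way around, and that is a deformation of the identity diagram by the rotational symmetry of the cylinder. Either way, note that the conclusion needs no further structure on $\sB$ or on $\bT$.
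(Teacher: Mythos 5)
Your proof is correct and follows essentially the same route as the paper's: set $P=U_R$ in the hexagon, use the unit axiom to identify $\theta_{M\odot N,U_R}$ with the canonical unitor comparison, use naturality of $\theta$ to strip the units off the other two instances, and let bicategory coherence absorb the remaining associators and unitors. The paper gives exactly this argument, only more tersely.
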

\begin{proof}
  Let $M\maps R\hto S$ and $N\maps S\hto R$, and set $P=U_R$ in the
  hexagon axiom for a shadow functor.  Naturality of $\theta$, the
  unit axioms for a shadow functor, and the axioms relating $\fl,\fr$
  to $\fa$ in a bicategory reduce the hexagon to the desired
  statement.
\end{proof}

\begin{rmk}
  In the proof of \autoref{shadow-symm} we didn't use the assumption 
  that $\theta$ is an isomorphism.  Thus, we could just as
  well have dispensed with that assumption in the definition.
  It is also worth noting that if we assume \autoref{shadow-symm}, then we can derive either of the unit axioms from the other.
\end{rmk}

\begin{eg}
  Let \bC\ be a symmetric monoidal category; then its identity functor
  gives $\bccat{\bC}$ a canonical shadow functor with $\bT=\bC$.
  The isomorphism $\theta$ is induced by the symmetry of \bC.
  By \autoref{thm:not-braiding}, it does not suffice for \bC\ to be braided.
\end{eg}

\begin{eg}\label{eg:bimod-shadow}
  We define the shadow of a bimodule $M\maps R\hto R$ in \calMod\ to
  be the coequalizer
  \[R\ten M \toto M \to \sh{M},
  \]
  where the two parallel maps are the left and right actions of $R$.
  Here \bT\ is abelian groups and $\sh{M}$ is the abelian group obtained from $M$ by forcing the left and right actions of $R$ to be equal; it might be called an ``underived version of Hochschild homology''.
  The isomorphism $\theta$ is obvious.
\end{eg}

We will consider more examples in \S\ref{sec:examples} after defining traces.



Finally, given that a shadow is a fundamentally cyclic operation,
it is natural to represent it by closing up planar bicategorical string diagrams into a \emph{cylinder}, thereby allowing strings to migrate cyclically around the back of the cylinder to the other side.
This is illustrated in \autoref{fig:string-shadow}.
In Appendix~\ref{sec:string-diagrams} we will extend the fundamental theorem of Joyal and Street to bicategories with shadows, by defining the ``value'' of such a labeled cylindrical diagram and proving that it is invariant under deformation.

\begin{figure}[tb]
  \centering
  \subfigure[Shadow $\protect\sh{M}$]{
    \begin{tikzpicture}
      \bgcylinder{0,0}{1.7}{1}{.3}{blue}{blue}
      \node[anchor=south west,blue] at (dl) {$R$};
      \draw (top) -- node[ed] {$M$} (bot);
    \end{tikzpicture}}
  \hspace{2cm}
  \subfigure[Cyclicity $\theta_{M,N}$]{
    \begin{tikzpicture}
      \bgcylinder{0,0}{2.3}{1.2}{.3}{blue}{green}
      \begin{pgfonlayer}{foreground}
        \drawtheta{0,1}{.5}{th}
      \end{pgfonlayer}
      \node[anchor=south west,blue] at (dl) {$R$};
      \filldraw[greenfill] (thL) -- ++(-0.1,0) -- (ul') --
      ($(ul)!.4!(ur)$) to[out=-90,in=90] node[ed,near end] {$M$} (thL);
      \filldraw[greenfill] (thR) to[out=-90,in=70]
      ($(dr' |- bot')!.7!(bot')$)  -- ($(dl' |- bot')!.7!(bot')$)
      -- node[ed,swap,near start] {$N$} ($(ul)!.7!(ur)$)
      -- (ur') -- (thR);
      \node[anchor=north east,green!50!black] at ($(ur)!.2!(dr)$) {$S$};
    \end{tikzpicture}}
  \caption{String diagrams for shadows}
  \label{fig:string-shadow}
\end{figure}
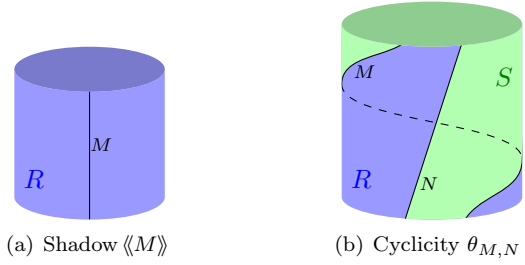

\begin{rmk}\label{rmk:noemb}
  The boundary circles of these cylinders should not be regarded as fixed, but are free to rotate as we deform the diagram.
  For instance, this means that the two pictures shown in \autoref{fig:noemb} should be regarded as the same.
  (Technically, we do have to distinguish a ``basepoint'' or ``cut point'' on the top and bottom circles in order to assign a well-specified value, since $\sh{M\odot N}$ is rarely \emph{equal} to $\sh{N\odot M}$.
  These basepoints then have to rotate along with the corresponding boundary circle.
  This will be made precise in Appendix~\ref{sec:string-diagrams}.
  When drawing pictures in the rest of the paper, we will always assume that these basepoints are in the back.)
\end{rmk}

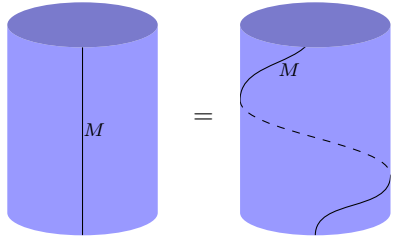
\begin{figure}
  \centering
  \begin{tikzpicture}
    \bgcylinder{0,0}{2.5}{1}{.3}{blue}{blue}
    \draw (top) -- node[ed] {$M$} (bot);
  \end{tikzpicture}
  \quad\raisebox{1.5cm}{$=$}\quad
  \begin{tikzpicture}
    \bgcylinder{0,0}{2.5}{1}{.3}{blue}{blue}
    \drawtheta{1,0}{1.5}{theta}
    \draw (top) to[out=-90,in=90] node[ed] {$M$} (thetaL);
    \draw (thetaR) to[out=-90,in=90] (bot);
  \end{tikzpicture}
  \caption{Cylinder boundaries are not fixed}
  \label{fig:noemb}
\end{figure}

\suppressfloats[t]
\section{Duality and trace}
\label{sec:bicat-traces}

We say that a 1-cell $M\maps R\hto S$ in a bicategory is \textbf{right
  dualizable} if there is a 1-cell $\rdual{M}\maps S\hto R$, called
its \textbf{right dual}, and evaluation and coevaluation 2-cells
$\eta\maps U_R \to M\odot \rdual{M}$ and $\ep\maps \rdual{M}\odot M\to
U_S$ satisfying the triangle identities.
We then also say that $(M,\rdual{M})$ is a
\textbf{dual pair}, that $\rdual{M}$ is \textbf{left dualizable}, and
that $M$ is its \textbf{left dual}.  See \autoref{fig:string-bicat-coeval}
for the string diagrams for coevaluation and evaluation.

\begin{figure}[tb]
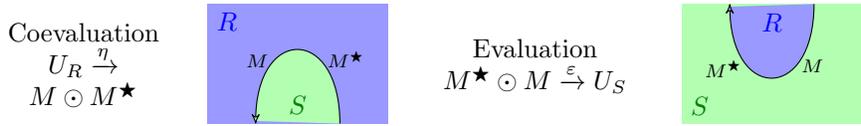

  \centering
  \begin{tabular}{m{22mm}m{28mm}m{28mm}m{28mm}}
    \begin{center}Coevaluation\\$U_R \xrightarrow{\eta} M \odot \rdual{M}$\end{center} &
    \begin{tikzcenter}[scale=.8]
      \fill[bluefill] (0,0) rectangle (3,2);
      \draw[<-,greenfill] (.8,0) to[out=90,in=90,looseness=3]
      node [ed,near start] {$M$} node[ed,near end] {$\rdual{M}$} (2.2,0);
      \node[anchor=north west,blue] at (0,2) {$R$};
      \node[anchor=south,green!50!black] at (1.5,0) {$S$};
    \end{tikzcenter}
    &
    \begin{center}Evaluation\\$\rdual{M}\odot M \xrightarrow{\ep} U_S$\end{center} &
    \begin{tikzcenter}[scale=.8]
      \fill[greenfill] (0,2) rectangle (3,0);
      \draw[<-,bluefill] (.8,2) to[out=-90,in=-90,looseness=3]
      node [ed,near end,swap] {$M$} node[ed,near start,swap] {$\rdual{M}$} (2.2,2);
      \node[anchor=south west,green!50!black] at (0,0) {$S$};
      \node[anchor=north,blue] at (1.5,2) {$R$};
    \end{tikzcenter}
  \end{tabular}
  \caption{Coevaluation and evaluation in a bicategory}
  \label{fig:string-bicat-coeval}
\end{figure}

In the bicategorical context, duals are also
frequently called \emph{adjoints}, since in the bicategory \Cat\ (see
\autoref{rmk:comp-order}) a dual pair is just an adjoint pair of
functors.
As in the monoidal case, there are equivalent characterizations of
duals, especially when the bicategory \sB\ is closed; see, for
instance,~\cite[16.4]{maysig:pht}.  Note that the notion is now asymmetric:
right duals and left duals are different.

When $M$ and $N$ are right dualizable, every 2-cell $f\maps Q\odot M\rightarrow N\odot P$ has a \textbf{mate} $\rdual{f}\colon\rdual{N}\odot Q\rightarrow P\odot\rdual{M}$, defined as the composite
\[
\rdual{N}\odot Q \xto{1\odot\eta} \rdual{N}\odot Q\odot M \odot \rdual{M}
\xto{1\odot f \odot 1} \rdual{N}\odot N \odot P \odot \rdual{M}
\xto{\ep\odot 1} P\odot \rdual{M}.
\]
This operation is pictured graphically in Figure~\ref{fig:mate}.

There is an evident dual construction for left dualizable objects, and the triangle identities show that the two are inverses.
Therefore, giving a 2-cell $Q\odot M\rightarrow N\odot P$ is equivalent to giving a 2-cell $\rdual{N}\odot Q\rightarrow P\odot\rdual{M}$.
For this reason, it is justified to draw \emph{horizontal} strings labeled by dual pairs, as in Figure~\ref{fig:dualhoriz}, since they can be ``tipped'' up or down to represent a 2-cell or its mate, as needed.
(We could also consider such pictures as taking place in a \emph{double category} whose vertical arrows are dual pairs, such as is used in~\cite{ks:r2cats} to describe naturality properties of mates.)

\begin{figure}[bt]
  \centering
  \subfigure[The mate of a 2-cell]{\label{fig:mate}
  \begin{tikzpicture}[scale=0.8]
    \clip (0,0) rectangle (6,4);
    \fill[bluefill] (0,0) rectangle (3,4);
    \fill[greenfill] (3,0) rectangle (6,4);
    \begin{pgfonlayer}{foreground}
      \node[vert] (f) at (3,2) {$f$};
    \end{pgfonlayer}
    \draw[redfill] (f) to[out=60,in=-90,looseness=0.7] node[ed,swap] {$M$} (4,3)
    arc (180:0:0.4) -- node[ed,near end] {$\rdual{M}$}  +(0,-3.1)
    -| (f);
    \draw[yellowfill] (f) to[out=-150,in=90,looseness=0.7] node[ed,swap] {$N$} (2,1)
    arc (0:-180:0.4) -- node[ed,near end] {$\rdual{N}$} +(0,3.1)
    -| (f);
    \draw (3,4) -- node[ed,swap] {$Q$} (f);
    \draw (f) -- node[ed] {$P$} (3,0);
  \end{tikzpicture}}
  \hspace{1.5cm}
  \subfigure[Drawing dual pairs]{\label{fig:dualhoriz}
  \begin{tikzpicture}[scale=1.5]
    \begin{pgfonlayer}{foreground}
      \node (phi) [vert] at (0,0)
      {$f$};
    \end{pgfonlayer}
    \clip (-1.4,-.9) rectangle (1.4,.9);
    \filldraw[redfill] (phi) -- +(0,-1) -- +(-1.5,-1) -- +(-1.5,0) -- (phi);
    \filldraw[greenfill] (phi) -- +(0,1) -- +(1.5,1) -- +(1.5,0) -- (phi);
    \filldraw[bluefill] (phi) -- node [ed,swap] {$Q$} +(0,1) -- +(-1.5,1)
    -- +(-1.5,0) -- node [ed] {$(N,\rdual{N})$} (phi);
    \filldraw[yellowfill] (phi) -- node [ed] {$P$} +(0,-1) --
    +(1.5,-1) -- +(1.5,0) -- node [ed,swap] {$(M,\rdual{M})$} (phi);
  \end{tikzpicture}}
\caption{Mates in a bicategory}
\end{figure}
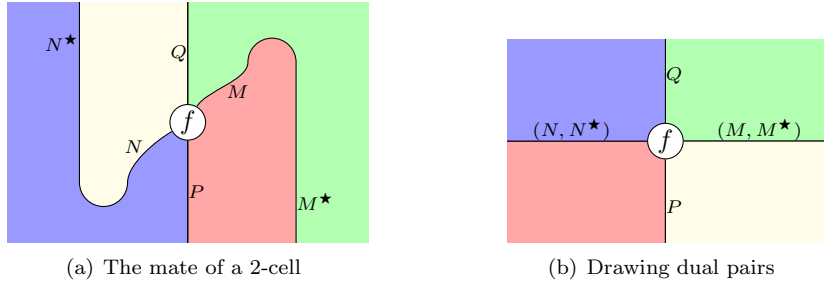


\begin{defn}
  Let \sB\ be a bicategory with a shadow functor and $M$ a dualizable 1-cell of \sB.
  The \textbf{trace} of a 2-cell $f\maps Q\odot M\rightarrow M\odot P$ is the composite:
  \[\sh{Q}\xto{\sh{\id\odot\eta}}
  \sh{Q\odot M\odot \rdual{M}} \xto{\sh{f\odot\id}}
  \sh{M\odot P\odot \rdual{M}} \too[\theta]
  \sh{\rdual{M}\odot M\odot P} \xto{\sh{\ep \odot \id}}
  \sh{P}.
  \]
\end{defn}

The trace is an arrow from $\sh{Q}$ to $\sh{P}$ in the target category \bT\ for the shadow, and is independent of the choice of $\rdual{M}$, \eta, and \ep.
A similar, but not identical, definition works when $M$ is left dualizable instead.
A string diagram picture of this trace is shown in \autoref{fig:bicat-trace}, along with an equivalent, more concise version using the above convention for drawing dual-pair strings horizontally.

Of course, if we take $Q=U_{R}$ and $P=U_{S}$, we obtain the trace of an endomorphism.
The frequency with which this case occurs implies that the shadows of unit 1-cells are particularly important.
We refer to the shadow of $U_R$ as ``the shadow of $R$'' and write $\sh{R}=\sh{U_R}$.

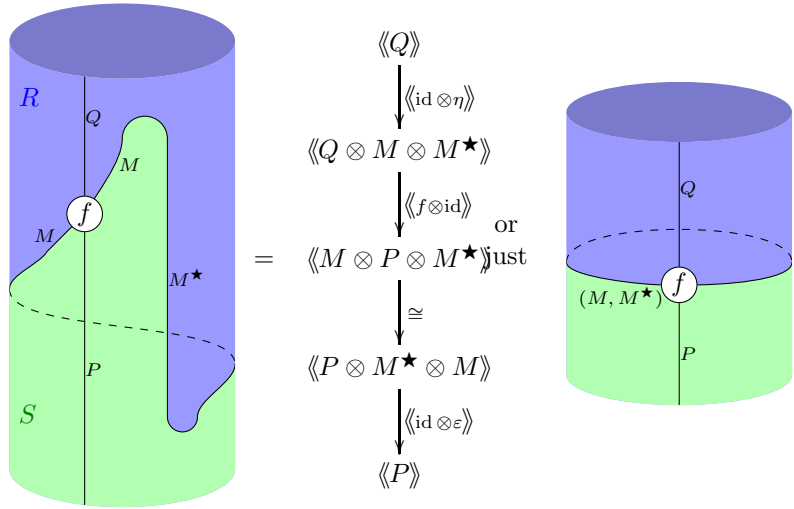
\begin{figure}[bp]
  \centering
  \begin{tabular}{m{29mm}m{3mm}m{19mm}m{9mm}m{25mm}}
    \begin{tikzpicture}
      \bgcylinder{0,-1.9}{5.7}{1.5}{0.5}{blue}{blue}
      \node[vert] (f) at (1,1.5) {$f$};
      \drawtheta{f}{-1}{theta}
      \begin{pgfonlayer}{background}
        \draw[greenfill,looseness=0.7]
        (thetaL) to[rrel,out=90,in=-120] (0.5,0.5)
        -- node [ed,near start] {$M$} (f.center)
        to[rrel,out=60,in=-90] node [ed,swap,near end] {$M$} (.5,1)
        arc (180:0:0.3)
        -- node [ed] {$\rdual{M}$} ($(f)+(1.1,-2.7)$)
        arc (-180:0:0.2)
        to [out=90,in=-90] (thetaR)
        -- ++(0.1,0)
        -- (dr' |- bot') -- (dl' |- bot') -- ($(thetaL)+(-0.1,0)$) -- (thetaL);
      \end{pgfonlayer}
      \draw (f |- top) -- node[ed] {$Q$} (f);
      \draw (f) -- node[ed] {$P$} (f |- bot);
      \path (ul) +(0,-0.5) node[blue,anchor=north west] {$R$};
      \path (dl) +(0,.5) node[green!50!black,anchor=south west] {$S$};
    \end{tikzpicture}
    & $=$ &
    $\xymatrix{\sh{Q} \ar[d]^{\sh{\id\ten\eta}} \\
      \sh{Q\ten M\ten \rdual{M}} \ar[d]^{\sh{f\ten \id}} \\
      \sh{M\ten P \ten \rdual{M}} \ar[d]^{\iso} \\
      \sh{P\ten \rdual{M}\ten M} \ar[d]^{\sh{\id\ten\ep}} \\
      \sh{P}}$
    & \begin{center}or\\just\end{center} &
    \begin{tikzcenter}
      \bgcylinder{0,0}{3.5}{1.5}{0.4}{blue}{blue}
      \node[vert] (f) at (1.5,1.2) {$f$};
      \elltheta{f}{.3}{theta}
      \begin{pgfonlayer}{background}
        \draw[greenfill] (f)
        to[out=180,in=-90,looseness=0.4] node[ed,pos=.4,below=2pt] {$(M,\rdual{M})$} (thetaL)
        -- ++(-0.1,0)
        -- (dl' |- bot') -- (dr' |- bot') -- ($(thetaR)+(0.1,0)$)
        -- (thetaR)
        to[out=-90,in=0,looseness=0.5] (f);
      \end{pgfonlayer}
      \draw (f |- top) -- node[ed] {$Q$} (f);
      \draw (f) -- node[ed] {$P$} (f |- bot);
    \end{tikzcenter}
  \end{tabular}
  \caption{The bicategorical trace}
  \label{fig:bicat-trace}
\end{figure}

\begin{eg}
  Of course, in $\bccat{\bC}$ the bicategorical trace reduces to the canonical symmetric monoidal trace.
\end{eg}

\begin{eg}\label{eg:equivar-trace}
  In \calMod, a bimodule $M\maps \bbZ\hto R$ is right dualizable when
  it is a finitely generated projective (right) $R$-module.  The trace
  of an endomorphism $f\maps M\to M$ is then a map from
  $\sh{\bbZ}=\bbZ$ to $\sh{R}$, which is determined by the image of
  $1$.  This element of $\sh{R}$ is known as the
  \emph{Hattori-Stallings trace} of $f$, see \cite{hattori,stallings}.

  More generally, if $\psi\maps R\rightarrow R$ is a ring homomorphism and
  $f\maps M\rightarrow M$ is a $\psi$-equivariant map (meaning that
  $f(mr) = f(m)\psi(r)$), we can view $f$ as a 2-cell $M\to M\odot
  R_\psi$ in \calMod.  Here $R_\psi$ denotes $R$ regarded as an
  $R$-$R$-bimodule with the right action twisted by \psi.  The trace
  of $f$ is then a map $\bbZ \iso \sh{\mathbb{Z}} \to \sh{R_\psi}$ in
  $\mathbf{Ab}$, or equivalently an element of $\sh{R_\psi}$.
  (Explicitly, $\sh{R_\psi}$ is the abelian group obtained from $R$ by
  quotienting by $r s\sim s \psi(r)$ for all $r$.)

  Note that even when $R$ is commutative, this latter type of trace cannot be
  expressed without bicategorical technology, since $R_\psi$ is
  unavoidably an $R$-$R$-bimodule, and the category of
  $R$-$R$-bimodules is not symmetric (or even braided) monoidal.
\end{eg}

\begin{rmk}\label{thm:sh-cat-trace}
  The categorically inclined reader will observe that a shadow is a ``categorified trace''.
  Just as trace is a cyclic function on endomorphisms in a 1-category, shadow is a cyclic \emph{functor} on endo-1-cells in a \emph{bi}category.
  Thus, to define traces in a bicategory, the bicategory must be equipped with a categorified trace, just as (for example) to define monoids in a category, the category must be equipped with a monoidal structure.
  This is a version of the \emph{microcosm principle} of~\cite{bd:hda3}.

  Since there is a canonical (in fact, unique) trace in any symmetric monoidal category in which all objects are dualizable, it is natural to think of constructing a canonical shadow in an analogous way for any symmetric monoidal bicategory with duals for objects (see~\cite{ds:monbi-hopfagbd}).
  In fact, all of our examples of shadows on bicategories can be constructed in this way, and our cylindrical string diagrams can thereby be identified with a fragment of the expected ``surface diagrams'' that apply to monoidal bicategories (see~\cite{street:ldtop-hocat}).
  However, in applications we prefer to avoid this abstract point of view, since a symmetric monoidal bicategory with duals for objects is quite a complicated object, while the shadows arising in practice have much simpler descriptions.
\end{rmk}

\section{Examples of shadows and traces}
\label{sec:examples}

In this section we will consider several different examples of bicategorical shadows and traces.
We begin with a generalization of \autoref{eg:equivar-trace}.

\begin{eg}
  In \autoref{eg:equivar-trace} we considered right $R$-modules as 1-cells $\bbZ\hto R$ in \calMod.
  More generally, an $S$-$R$-bimodule $M$ is right dualizable in
  \calMod\ if and only if it is finitely generated and projective as a
  right $R$-module.  The Euler characteristic of such an $M$ (that is,
  the trace of its identity map) is the map $\sh{S}\to\sh{R}$ which
  sends each $s\in S$ to the Hattori-Stallings trace of the $R$-module
  homomorphism $(s\cdot -)\maps M\to M$ (which descends to $\sh{S}$
  because the Hattori-Stallings trace is cyclic).  More generally, the
  trace of $f\maps M\to M$ is the map $\sh{S}\to\sh{R}$ determined by
  sending $s\in S$ to the Hattori-Stallings trace of $m\mapsto
  f(sm)=s\cdot f(m)$.

  In particular, let $G$ be a group, $k$ a field, and $V$ a
  finite-dimensional left $k G$-module (that is, a representation of
  $G$).  If we regard $V$ as a 1-cell $k G\hto k$ in \calMod, then it
  is right dualizable, so it has an Euler characteristic, which is a
  map $\sh{k G}\rightarrow k$.  It is easy to check that $\sh{k G}$ is
  the $k$-vector space generated by the conjugacy classes of $G$, and
  this Euler characteristic is essentially the character of the
  representation $V$.
\end{eg}

The following two examples are somewhat degenerate, but can be useful.

\begin{eg}\label{eg:span-traces}
  Let \bS\ be a category with pullbacks (a.k.a.\ fiber products) and define a bicategory
  $\calSpan(S)$ whose objects are those of \bS\ and whose 1-cells
  $R\hto S$ are diagrams $R\ot M\to S$ in \bS; these are often called
  \emph{spans} or \emph{correspondences}.  Composition is by
  pullback.  We define the shadow of a span $R\ot M\to R$ to be the
  object $P$ in the pullback square
  \[\xymatrix{P \ar[r]\ar[d] & M \ar[d]\\
    R\ar[r]^<>(.5){\Delta} & R\times R.}
  \]
  This defines a shadow on $\calSpan(S)$ with values in \bS.  If
  $f\maps R\to R$ is an arrow in \bS, the shadow of the corresponding
  span $R\oot[\id] R \too[f] R$ (the ``graph'' of $f$) is the equalizer of $f$ and $\id_R$;
  that is, the ``object of fixed points'' of $f$.  In particular, for an
  object $R$ of \bS, we have $\sh{R} = R$.

  In $\calSpan(S)$, a 1-cell $R\oot[h] M\too[g] S$ is right
  dualizable if and only if $h$ is an isomorphism.  If
  $R\oot[\id] R\too[g] S$ is such a dualizable 1-cell, an endomorphism
  of it is a map $f\maps R\to R$ such that $gf=g$.  Its trace is precisely
  $gf=g\maps \sh{R} = R \to S = \sh{S}$.
\end{eg}

\begin{eg}\label{eg:ncob-bicat}
  There is a bicategory \calnCob\ whose objects are closed
  $(n-1)$-dimensional manifolds, whose 1-cells are $n$-dimensional
  cobordisms, and whose 2-cells are boundary-preserving
  diffeomorphisms.  To make the associativity and unit isomorphisms
  coherent, we have to include ``thin'' cobordisms, and also give
  ``collars'' to the thick ones.  A cobordism from $R$ to $R$ is an
  $n$-manifold $M$ whose boundary is $R\sqcup R\op$; we define its
  shadow $\sh{M}$ to be the result of gluing together these two copies
  of $R$.

  There are no interesting bicategorical traces in
  \calnCob, since all the 2-cells are isomorphisms.  (In essence, here
  the shadow itself ``is'' the interesting trace.)
\end{eg}

Many bicategories of interest are obtained as ``homotopy bicategories'' of other bicategories, by which we mean that we invert some class of \emph{2-cells} considered as ``weak equivalences.''
The theory of homotopy bicategories is not well-developed, but since our goal is only to give examples, we will gloss over all the technicalities.

\begin{eg}
  Let \calCh\ be the bicategory whose objects are rings (not necessarily commutative) and whose 1-cells are chain complexes of bimodules.
  We define shadows analogously to \calMod, introducing appropriate signs in the definition of $\theta$.
  Here \bT\ is the category $\bCh{\mathbb{Z}}$ of chain complexes of abelian groups.

  We can now construct a bicategory $\Ho(\calCh)$, whose objects are the same as those as
  \calCh, but in which $\Ho(\calCh)(A,B)$ is the
  \emph{derived} category of $A\ten B\op$ (that is, we invert the
  quasi-isomorphisms of chain complexes).  The composition $\odot$ is now
  the derived functor of the tensor product of bimodules, and the
  shadow of a 1-cell $M\maps A\hto A$ now \emph{is} the Hochschild
  homology $\mathrm{HH}(A;M)$ of $A$ with coefficients in $M$.  The target
  category \bT\ of the shadow is now $\Ho(\bCh{\mathbb{Z}})$.

  Similarly, we have a bicategory \calChDGA\ whose objects are DGAs rather than rings, and $\Ho(\calChDGA)$ is defined similarly.
  For technical reasons, we may sometimes want to restrict the objects of
  $\Ho(\calChDGA)$ to be DGAs which are cofibrant, at least as chain
  complexes if not in some model structure for DGAs.  We can also
  consider modules over ring spectra, in which case the shadow is
  topological Hochschild homology.
\end{eg}

\begin{eg}
  There is a bicategory $\bgptop$ whose objects are discrete groups,
  whose 1-cells $G\hto H$ are based topological spaces with a left
  action of $G$ and a right action of $H$, and whose 2-cells are
  equivariant maps.  The unit $U_G$ is $G_+$ regarded as a $G$-$G$
  space, and the composition of $M\maps G\hto H$ and $N\maps H\hto K$
  is the smash product over $H$, i.e.\ the coequalizer
  \[M\wedge H_+\wedge N\toto M\wedge N\to M\odot N.
  \]
  Likewise, the shadow of a $G$-$G$ space $M$ is the coequalizer
  \[G_+\wedge M\toto M\to \sh{M}
  \]
  where the two parallel maps are the left and right actions of $G$ on
  $M$.  The target category $\bT$ of the shadow is the category
  $\Top_*$ of based spaces.

  For a useful homotopy version of this example, we need to
  ``stabilize'' (i.e.\ move from spaces to spectra, as in
  \autoref{eg:stab-duality}) as well as pass to homotopy categories.
  This would result in a bicategory we refer to as $\Ho(\bgpsp)$, whose shadow would take values in the ordinary stable homotopy category $\Ho(\Sp)$.
  A careful foundational analysis leading to such a bicategory has not yet been done, but we can still use a naive sort of stabilization to obtain a notion of duality analogous to Spanier-Whitehead duality.
  That is, we say a 1-cell $M$ in \bgptop\ is \emph{$n$-dualizable} if we have maps
  \[G_+\wedge S^n\rightarrow M\odot N\hspace{1cm}
  N\odot M\rightarrow H_+\wedge S^n\]
  satisfying the usual relations up to equivariant stable homotopy.
  Once $\Ho(\bgpsp)$ is shown to exist, we expect its intrinsic notion of duality to be describable as $n$-duality; thus from now on we will refer informally to ``duality'' and ``trace'' in $\Ho(\bgpsp)$.

  \label{eg:gtop-duality} It turns out that very little is known about this sort of duality, but there are a couple examples that have interesting traces.
  On the one hand, if $G$ is finite and $M$ has a free $G$-action, then duality for $M$, regarded as a 1-cell $1\hto G$ in \hobgpsp, is equivalent to duality for $M$ in $\Ho(\gSp{G})$, the equivariant stable homotopy category for $G$; see \cite[8.6]{adams}.
  The corresponding traces in $\Ho(\gSp{G})$ and in $\hobgpsp$ are not identical, but the first is a direct summand of the second.
  Both have interpretations as equivariant fixed point indices, but the trace in $\hobgpsp$ also detects fixed orbits; see \cite{equiv}.

  On the other hand, if $G$ is discrete (possibly infinite) and $M$ is a finite free right $G$-CW complex, then $M\colon 1\hto G$ is dualizable in $\hobgpsp$.  This  is known as Ranicki duality; see \cite{ranicki}.
\label{eg:reidemeister-1}
  For example, if $X$ is a closed smooth manifold (or a compact ENR), its universal
  cover $\tilde{X}$ has a free action of $\pi_1(X)$
  and this equivariant space is Ranicki dualizable.

  Now suppose that $X$ is a space of this sort, and let $f\colon X\rightarrow X$ be a continuous endomorphism.
  For each choice of base point $\ast$ in $X$ and base path from $\ast$ to 
  $f(\ast)$, there is an induced  group homomorphism
  \[\psi\colon \pi_1(X)\rightarrow \pi_1(X)\] and a $\psi$-equivariant map 
  \[\tilde{f}\colon \tilde{X}\rightarrow \tilde{X}.\]
  As in \autoref{eg:equivar-trace}, we can regard $\tilde{f}$ as a
  2-cell $\tilde{X} \to \tilde{X} \odot \pi_1(X)_\psi$, where
  $\pi_1(X)_\psi$ denotes $\pi_1(X)$ regarded as a
  $\pi_1(X)$-$\pi_1(X)$-space with the right action twisted by $\psi$.
  The trace of $\tilde{f}$ is, by definition, an element of the zeroth stable homotopy group of the discrete set $\sh{\pi_1(X)_\psi}$, which is just the free abelian group $\bbZ\sh{\pi_1(X)_\psi}$.

  This trace is known as the \emph{Reidemeister trace}.  It is a
  refinement of the fixed-point index, which separates out
  contributions coming from different ``fixed-point classes''.  A
  classical description of the Reidemester trace can be found
  in~\cite{b:lefschetz,j:nielsen}; the bicategorical description is
  due to~\cite{kate:traces}.  In \S\ref{sec:funct-bicat}, we will see
  that the abstract proof of the Lefschetz fixed-point theorem from
  \autoref{eg:lefschetz} carries over directly to the Reidemeister
  trace when expressed bicategorically.  In fact, the Reidemester
  trace is refined enough to support a converse to the Lefschetz fixed
  point theorem, but that requires a deeper, more concrete, argument.
\end{eg}

\begin{eg}\label{eg:bicat-ex}\label{thm:fiberwise+cw-duality}
  The authors of~\cite{maysig:pht} define a bicategory whose objects are topological spaces, and in which a 1-cell $R\hto S$ is a parametrized spectrum over $R\times S$.
  We call this bicategory $\Ho(\calEx)$, since it is also the homotopy bicategory of a point-set-level bicategory \calEx.
  If $M\maps R\hto R$ is a spectrum parametrized over $R\times R$, we define its shadow to be $\sh{M} = r_!\Delta^*M$, where $\Delta^*$ denotes pullback along the diagonal $R\to R\times R$ and $r_!$ denotes pushforward along the map $R\to \star$ to the point.
  This defines a shadow on $\Ho(\calEx)$ landing in the ordinary stable homotopy category $\Ho(\Sp)$.

  Using the identifications $ (B\times \star)\cong B\cong (\star\times B)$, a spectrum $M$ over $B$ can also be regarded as a 1-cell $\hat{M}\colon B \hto \star$ or $\check{M}\colon \star\hto B$ in $\Ho(\calEx)$.
  The 1-cell $\hat{M}$ is right dualizable if and only if $M$ is dualizable in the symmetric monoidal category $\Ho(\bEx{B})$ of parametrized spectra over $B$.
  In this case $M$ is said to be \emph{fiberwise dualizable}, since the fiberwise suspension spectrum of a fibration is dualizable in this sense if and only if each \emph{fiber} is dualizable in $\Ho(\Sp)$; see \cite[15.1.1]{maysig:pht}.

  Right dualizability of $\check{M}$, called \emph{Costenoble-Waner duality}, is very different.
  For example, if the fiberwise sphere spectrum $S_B= \Sigma^{\infty}_{B,+}(B)$ is Costenoble-Waner dualizable, then $\Sip(B)$ is necessarily dualizable in $\Ho(\Sp)$; see \cite[18.1.6]{maysig:pht}.
  In other words, fiberwise duality
  contains information only about the \emph{fibers} of a parametrized
  space, while Costenoble-Waner duality also knows about the \emph{base} space.

The traces of endomorphisms of $M$, $\hat{M}$, and $\check{M}$ are also closely related.
We will study these relationships in a more general setting in~\cite{PS3}; here we mention only the simplest case of Euler characteristics.
If $M$ is fiberwise dualizable, then the Euler characteristics of $M$ (in $\Ho(\Sp_B)$) and $\hat{M}$ (in $\Ho(\calEx)$) are defined, and the following triangle commutes:
\[\xymatrix@C=1.5pc{ r_!S_B\ar[rr]^{r_!(\chi(M))} \ar[dr]_{\chi(\hat{M})} && r_!S_B\ar[dl]\\ & S}.\] 
where $S_B$ is the parametrized sphere spectrum over $B$, $S$ is the ordinary sphere spectrum, and the map $r_! S_B \to S$ is adjunct to the defining isomorphism $S_B \cong r^* S$.
On the other hand, if $M$ is Costenoble-Waner dualizable, then $r_!M$ is dualizable, so that the Euler characteristics of $\check{M}$ (in $\Ho(\calEx)$) and $r_!M$ (in $\Ho(\Sp)$) are defined, and the following triangle commutes:
\[\xymatrix{S \ar[rr]^-{\chi(\check{M})} \ar[dr]_{\chi(r_!M)} && r_!S_B \ar[dl]\\
  & S}.
\]
Note that in the first case, the Euler characteristic of $\hat{M}$ contains \emph{less} information than that of $M$, while in the second case, the Euler characteristic of $\check{M}$ contains \emph{more} information than that of $r_!M$.

  Both kinds of dualizable parametrized spectra often arise as fiberwise suspension spectra of parametrized spaces, i.e.\ by applying a functor from $\calSpan(Top)$ to \calEx.
  Thus, this is a bicategorical instance of \autoref{rmk:cartesian}, since $\calSpan(Top)$ is ``cartesian'' and has no interesting dual pairs itself.

Like \autoref{eg:gtop-duality}, traces of some parameterized endomorphisms have familiar 
fixed-point interpretations.  If $M$ is a parameterized space over $B$ whose 
suspension spectrum is fiberwise dualizable and $f\colon 
M\rightarrow M$ is a fiberwise endomorphism, the trace of the suspension of $f$ is the fiberwise fixed 
point index of $f$, \cite{d:index}.  If $B$ is a closed smooth manifold (or compact ENR), 
$S_B^0$ is Costenoble-Waner dualizable.  An endomorphism $f$ of $B$ induces
a ``twisted endomorphism" of $S_B^0$ and the trace of the twisted
endomorphism is the Reidemeister trace of $f$; see~\cite{kate:higher}.
\end{eg}

\begin{eg}\label{eg:mat}
  A simpler bicategorical instance of \autoref{rmk:cartesian} is provided by bicategories of \emph{matrices}.
  Let $\calMat(Ab)$ denote the bicategory whose objects are sets, whose 1-cells $R\hto S$ are $(R\times S)$-matrices $(M_{r,s})_{r\in R,s\in S}$ of abelian groups, and whose composition \odot\ is given by ``matrix multiplication:''
  \[(M\odot N)_{r,t} = \bigoplus_{s\in S} \left(M_{r,s} \ten N_{s,t}\right).
  \]
  The shadow of a square matrix $(M_{r_1,r_2})_{r_1,r_2\in R}$ is given by its ``trace:'' $\sh{M} = \bigoplus_{r\in R} M_{r,r}$.
  The unit $U_R$ is given by
  \[(U_R)_{r_1,r_2} =
  \begin{cases}
    \mathbb{Z} &\text{if } r_1=r_2\\
    0 & \text{otherwise}
  \end{cases}
  \]
  and so the shadow of a set $R$ is $\sh{R} = \bigoplus_R \mathbb{Z} = \mathbb{Z}[R]$.

  It is easy to construct a functor of bicategories $\mathbb{Z}[-]\colon\calSpan(Set)\to\calMat(Ab)$ which is the identity on 0-cells, and which turns a span $R\leftarrow M \to S$ into a matrix whose $(r,s)$-entry is the free abelian group on the fiber of $M$ over $(r,s)$.
  Moreover, if $S$ is finite, and so is each such fiber, then $\bbZ[M]\maps R\hto S$ is dualizable in \calMat(Ab).
  (The case $S=\star$ is an analogue of fiberwise duality, while the case $R=\star$ is an analogue of Costenoble-Waner duality; in~\cite{PS3} we will unify these examples in a general context.)
  Finally, if $f\maps M\to M$ is an endo-2-cell in \calSpan(Set), then
  \[\tr(\bbZ[f])\maps \bbZ[R]\to\bbZ[S]\]
  is the function which maps each generator $r\in R$ to the sum $\sum_{s\in S} \mathrm{ind}(f_{r,s})\cdot s$, where $\mathrm{ind}(f_{r,s})$ is the number of fixed points of $f_{r,s}\maps M_{r,s}\to M_{r,s}$.
\end{eg}

There are a number of other examples, some of which can be found in~\cite{kate:traces, kate:rel,equiv}, but most can be considered extensions of those we have mentioned above.
One other large class of examples consists of ``monoids and bimodules'' in some other bicategory; see for instance~\cite[9.4]{kate:traces} and~\cite[\S11]{shulman:frbi}.
And as mentioned above, in~\cite{PS3} we will study a general class of examples including both \calEx\ and $\calMat(Ab)$.

\section{Properties of trace in bicategories with shadows}
\label{sec:prop-bicat}

We collect here some basic properties of the bicategorical trace, most of them analogous to the well-known properties of the symmetric monoidal trace.
Like the latter, they are easiest to prove using string diagram calculus.
In this section, we assume that \sB\ is a bicategory equipped with a shadow functor.

\begin{prop}[``Tightening'']\label{thm:tr-vfunc-glob}
  Let $M$ be right dualizable, let $f\maps Q\ten M \to M\ten P$ be a
  2-cell, and let $g\maps Q'\to Q$ and $h\maps P\to P'$ be 2-cells.
  Then
  \[\sh{h}\circ \tr(f)\circ \sh{g} = \tr\big((\id_M\ten h)\circ f \circ (g\ten \id_M)\big).\]
\end{prop}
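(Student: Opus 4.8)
The plan is to prove this by a direct diagram chase that pushes the 2-cells $g$ and $h$ through the trace construction, using only the interchange law for $\odot$, functoriality of the shadow functor $\sh{-}$, and naturality of $\theta$ (together with naturality of the unitors $\fl$ and $\fr$). The geometric picture --- which is the quickest way to be convinced --- is read off from \autoref{fig:bicat-trace}: precomposing $f$ with $g$ puts a vertex on the incoming $Q$-string and postcomposing with $h$ puts one on the outgoing $P$-string, and since neither string is involved in the traced $M$/$\rdual{M}$ loop, both vertices slide freely past all of the $\eta$, $\theta$, and $\ep$ data until they sit next to $f$. This is made rigorous by the formalism of \autoref{sec:string-diagrams}, but I will also indicate the underlying algebra.

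Write $\tilde{f} = (\id_M\odot h)\circ f\circ(g\odot\id_M)$. First I would apply bifunctoriality of $\odot$ to factor $\tilde{f}\odot\id_{\rdual{M}}$ as $(\id_M\odot h\odot\id_{\rdual{M}})\circ(f\odot\id_{\rdual{M}})\circ(g\odot\id_M\odot\id_{\rdual{M}})$, suppressing associators throughout (they are controlled by coherence), and then apply the functor $\sh{-}$. At the coevaluation end, the interchange law gives $(g\odot\id_{M\odot\rdual{M}})\circ(\id_{Q'}\odot\eta) = g\odot\eta = (\id_Q\odot\eta)\circ(g\odot\id_{U_R})$; combined with naturality of $\fr$, this shows that $\sh{g\odot\id_M\odot\id_{\rdual{M}}}$ applied after the coevaluation step of $\tr(\tilde f)$ equals the coevaluation step of $\tr(f)$ precomposed with $\sh{g}$. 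Symmetrically, at the evaluation end, naturality of $\theta$ moves $\sh{\id_M\odot h\odot\id_{\rdual{M}}}$ across $\theta$, turning it into $\sh{\id_{\rdual{M}}\odot\id_M\odot h}$; then the interchange law, via $(\ep\odot\id_{P'})\circ(\id_{\rdual{M}\odot M}\odot h) = \ep\odot h = (\id_{U_S}\odot h)\circ(\ep\odot\id_P)$, together with naturality of $\fl$, rewrites $\sh{\ep\odot\id}\circ\sh{\id_{\rdual{M}}\odot\id_M\odot h}$ as $\sh{h}\circ\sh{\ep\odot\id}$. Assembling these identities gives $\tr(\tilde f) = \sh{h}\circ\tr(f)\circ\sh{g}$.

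The computation is entirely routine, so there is no serious obstacle; the only place needing a moment's care is the bookkeeping of the unit 1-cells. Since the definition of $\tr$ abbreviates the coevaluation step as $\sh{Q}\xrightarrow{\sh{\id\odot\eta}}\sh{Q\odot M\odot\rdual{M}}$, tacitly identifying $\sh{Q}$ with $\sh{Q\odot U_R}$ along $\sh{\fr}$ (and dually identifying $\sh{P\odot U_S}$ with $\sh{P}$ along $\sh{\fl}$ at the evaluation step), one must check that the factors $\sh{g\odot\id_{U_R}}$ and $\sh{\id_{U_S}\odot h}$ produced above really do become $\sh{g}$ and $\sh{h}$ under these identifications --- which is precisely the naturality of $\fr$ and of $\fl$. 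The remaining associativity coherences are suppressed in the usual way, and in the string-diagram presentation they, along with the whole argument, disappear into the single observation that $\sh{g}$ and $\sh{h}$ lie on strings disjoint from the traced loop.
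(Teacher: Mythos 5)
Your proposal is correct and takes essentially the same route as the paper, which proves Tightening by exhibiting the two sides as cylindrical string diagrams related by a deformation (``pulling on the string'' through $g$, $f$, and $h$) and appealing to the deformation-invariance theorem of the appendix; your additional algebraic unpacking via interchange, naturality of $\theta$, and naturality of the unitors is a sound elaboration of the same argument.
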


The above equality is shown graphically in \autoref{fig:tightening}.  It
should be easy to visualize a deformation relating the two pictures:
we simply ``pull on the string'' through $g$, $f$, and $h$ (hence the name, which we have taken from~\cite{jsv:traced-moncat}).

\begin{figure}[tbp]
  \centering
  \def\tightening#1#2#3#4{
  \begin{tikzpicture}
    \bgcylinder{0,-2.5}{7}{1.5}{0.5}{blue}{blue}
    \begin{scope}[every node/.style={fill=white}]
      \path (1,1.5) node[vert] (f) {$f$}
      +(-0.5,#1) node[vert2] (g) {$g$}
      +(0.5,-#2) node[vert2] (h) {$h$};
    \end{scope}
    \draw (g |- top) -- node [ed,near end] {$Q'$} (g)
    -- node [ed,swap] {$Q$} (f) -- node [ed,pos=0.35] {$P$} (h)
    -- node [ed] {$P'$} (h |- bot);
    \drawtheta{f}{-#4}{theta}
    \begin{pgfonlayer}{background}
      \draw[greenfill,looseness=0.7]
      (thetaL) to[rrel,out=90,in=-120] (0.5,0.5)
      -- node [ed,near start] {$M$} (f.center)
      -- node [ed,swap,near end] {$M$} ++(.5,#3)
      arc (180:0:0.3)
      -- node [ed] {$\rdual{M}$} ($(f)+(1.1,-1.5)+(0,-#4)$)
      arc (-180:0:0.2)
      to [out=90,in=-90] (thetaR)
      -- ++(0.1,0)
      -- (dr') |- (bot') -| (dl') -- ($(thetaL)+(-0.1,0)$) -- (thetaL);
    \end{pgfonlayer}
  \end{tikzpicture}
  }
  \tightening{1.3}{1.3}{2}{2.1}
  \qquad \raisebox{4cm}{=} \qquad
  \tightening{1.75}{3.3}{1}{1.1}
  \caption{String diagram picture of \autoref{thm:tr-vfunc-glob} (Tightening)}\label{fig:tightening}
\end{figure}

\begin{prop}[``Sliding'' or ``Cyclicity'']\label{thm:tr-cyclnat}
  Let $M$ and $N$ be right dualizable 1-cells in \sB\ and $g\maps
  K\odot N \rightarrow M\odot L$ and $f\maps Q\odot M\rightarrow
  N\odot P$ be 2-cells.  Then the following square commutes:
  \[\xymatrix@C=1.5in{\sh{Q\odot K}\ar[d]_\iso
    \ar[r]^{\tr((f\odot \id_L)(\id_Q\odot g))} &
    \sh{P\odot L}\ar[d]^\iso\\
    \sh{K\odot Q}\ar[r]_{\tr((g\odot \id_P)(\id_K\odot f))}& \sh{L\odot P}}\]
\end{prop}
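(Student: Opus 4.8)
The plan is to prove this the same way as its symmetric monoidal precursor, \autoref{thm:twisted-comm-cyclic}: draw both composites around the square as cylindrical string diagrams, exhibit a deformation carrying one into the other, and invoke the fundamental theorem for such diagrams established in Appendix~\ref{sec:string-diagrams}. So first I would unwind the definitions. The 2-cell $(f\odot\id_L)(\id_Q\odot g)$ has the form $(Q\odot K)\odot N\to N\odot(P\odot L)$, and its trace over $N$ is the cylindrical picture in which the $N$-strand wraps around the back of the cylinder while an internal $M$-strand joins the vertex for $g$ to the vertex for $f$; the top boundary circle is cut so as to read $Q\odot K$ and the bottom circle so as to read $P\odot L$. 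Dually, $(g\odot\id_P)(\id_K\odot f)$ has the form $(K\odot Q)\odot M\to M\odot(L\odot P)$, and its trace over $M$ is the cylindrical picture in which now the $M$-strand wraps around the back and an internal $N$-strand joins $f$ to $g$, with the top circle reading $K\odot Q$ and the bottom circle reading $L\odot P$. The two vertical isomorphisms in the square are exactly the instances of $\theta$ that rotate the cut point on each boundary circle past the adjacent strands, converting $\sh{Q\odot K}$ into $\sh{K\odot Q}$ and $\sh{P\odot L}$ into $\sh{L\odot P}$.

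The key step is then the geometric observation that, on the cylinder, the first diagram deforms into the second by sliding the vertex for $g$ (together with the segment of $M$-strand attached to it) around the back of the cylinder: as $g$ crosses the ``cut,'' the strand that was wrapping around becomes internal and the internal strand becomes the wrapping one, and the cut points on the top and bottom circles are dragged along with it, which is precisely the cyclic rotation recorded by the vertical copies of $\theta$. It is easiest to verify this first in the case $Q=K=P=L=U$, where each boundary circle carries a single strand and there is nothing to cut; there the deformation is visibly the cylindrical analogue of the planar move relating the two halves of \autoref{fig:smc-cyclicity}, and the vertical maps reduce to identities, recovering the statement $\tr(gf)=\tr(fg)$. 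The general case differs only by carrying the extra parallel strands $Q,K,P,L$ along unchanged. Once the deformation is seen, the fundamental theorem of Appendix~\ref{sec:string-diagrams} guarantees the two labeled diagrams have the same value, which is the asserted commutativity; comparison with the conventions of \autoref{fig:bicat-trace} confirms that the values are the two legs of the square.

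I expect the main obstacle to be bookkeeping rather than conceptual content: one must track carefully where the basepoints on the top and bottom boundary circles sit before and after the slide, because $\sh{Q\odot K}$ and $\sh{K\odot Q}$ are genuinely distinct objects of $\bT$, and the entire force of the proposition is that the cyclic rotation $\theta$ intertwines the $N$-trace with the $M$-trace. For a reader who prefers to avoid diagrams, the same identity can in principle be extracted directly from the definition of trace by inserting $\eta_M,\eta_N,\ep_M,\ep_N$, using the triangle identities for both dual pairs to pass between $f,g$ and their mates, and then using naturality of $\theta$ together with the hexagon axiom of \autoref{def:shadow} to reconcile the two cyclic orderings; but that argument is a long unenlightening chain of equalities, which is exactly the situation the cylindrical calculus is designed to avoid.
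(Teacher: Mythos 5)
Your proposal is correct and takes essentially the same approach as the paper: the paper likewise proves the square by drawing the two legs as cylindrical string diagrams (its Figure~\ref{fig:sliding}, with the vertical $\theta$'s absorbed into the diagrams as rotations of the boundary circles) and deforming one into the other by sliding a vertex around the back of the cylinder while the endpoints of $Q$, $K$, $L$, $P$ stay fixed, with Appendix~\ref{sec:string-diagrams} supplying the rigor. The only cosmetic difference is that the paper slides $f$ rather than $g$, which is the same move run in the opposite direction.
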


This equality is shown graphically in \autoref{fig:sliding}.  Here
the deformation requires a little more imagination: the idea is to
slide $f$ to the right and around the back of the cylinder, keeping
the strings labeled $Q$, $K$, $L$, and $P$ fixed where they hit the
top and bottom boundaries of the cylinder.

\begin{figure}[tbp]
  \centering
  \def\slidingcenter#1#2#3#4#5#6#7#8{
    \bgcylinder{0,#8}{7.5}{1.7}{0.5}{#5}{#6}
    \begin{scope}[every node/.style={fill=white}]
      \path (1.6,1.5) node[vert2] (f) {$#1$}
      +(-0.7,-1.2) node[vert2] (g) {$#2$};
    \end{scope}
    \drawtheta{g}{-0.8}{theta}
    \begin{pgfonlayer}{background}
      \draw[#7fill]
      (thetaL) to [out=90,in=-120] node [ed,pos=0.6] {$#3$} (g.center)
      -- (f.center)
      to[rrel,out=75,in=-90,looseness=0.5] node [ed,swap,near end] {$#3$} (.5,1)
      arc (180:0:0.2)
      -- node [ed] {$\rdual{#3}$} +(0,-4.5)
      arc (-180:0:0.2)
      to[out=90,in=-90] (thetaR)
      -- ++(0.1,0)
      -- (dr') |- (bot') -| (dl') -- ($(thetaL)+(-0.1,0)$) -- (thetaL);
    \end{pgfonlayer}
    \begin{pgfonlayer}{foreground}
      \draw (g) -- node [ed] {$#4$} (f);
    \end{pgfonlayer}
  }
  \begin{tikzpicture}
    \slidingcenter{f}{g}{M}{N}{yellow}{blue}{green}{-2}
    \drawtheta{f}{3}{thetatop}
    \begin{pgfonlayer}{background}
      \draw[bluefill]
      (g.center) -- (f.center) -- ++(0,1.2)
      to[out=90,in=-90,looseness=0.5] node [ed] {$Q$} (thetatopR)
      -- ++(0.1,0)
      -- (ur') -- (f |- top)
      to[out=-90,in=90] node [ed,near start] {$K$} ($(g) + (0,2.7)$)
      -- node [ed,swap] {$K$} (g.center);
      \draw[bluefill]
      (thetatopL) to[out=90,in=-90] node [ed,swap] {$Q$} (g |- top) -- (ul')
      -- ($(thetatopL)+(-0.1,0)$) -- (thetatopL);
      \draw[redfill]
      (g.center) -- (f.center)
      -- node [ed,near end] {$P$} (f |- bot) -- (g |- bot)
      -- node [ed,near start] {$L$} (g.center);
    \end{pgfonlayer}
  \end{tikzpicture}
  \qquad \raisebox{4cm}{=} \qquad
  \begin{tikzpicture}
    \slidingcenter{g}{f}{N}{M}{blue}{blue}{red}{-4}
    \begin{pgfonlayer}{background}
      \draw[yellowfill]
      (g.center) -- (f.center)
      -- node [ed] {$K$} (f |- top) -- (g |- top)
      -- node [ed,near start,swap] {$Q$} (g.center);
    \end{pgfonlayer}
    \drawtheta{g}{-2.7}{thetabot}
    \begin{pgfonlayer}{background}
      \draw[greenfill]
      (g.center) -- (f.center) -- node [ed] {$L$} ++(0,-3.5)
      to[out=-90,in=90] node [ed,near end,swap] {$L$} (g |- bot)
      -| (dl') -- ($(thetabotL)+(-0.1,0)$) -- (thetabotL)
      to[out=90,in=-90] node [ed,swap] {$P$} ($(g) + (0,-1.7)$) -- (g.center);
      \draw[greenfill]
      (thetabotR) to [out=-90,in=90] node [ed,swap] {$P$} (f |- bot')
      -| (dr') -- ($(thetabotR)+(0.1,0)$) -- (thetabotR);
    \end{pgfonlayer}
  \end{tikzpicture}
  \caption{String diagram picture of \autoref{thm:tr-cyclnat} (Sliding)}\label{fig:sliding}
\end{figure}


\begin{cor}
  If $M$ and $N$ are right dualizable and $f\colon M\to N$ and
  $g\colon N\to M$ are 2-cells, then
  $\tr(f g) = \tr(g f)$.
\end{cor}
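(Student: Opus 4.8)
The plan is to obtain this as the special case of \autoref{thm:tr-cyclnat} in which all of $Q$, $K$, $P$, $L$ are unit 1-cells. First observe that the statement makes sense: since $f\maps M\to N$ and $g\maps N\to M$ are 2-cells, $M$ and $N$ lie in a common hom-category $\sB(R,S)$, so $gf\maps M\to M$ and $fg\maps N\to N$ are endo-2-cells, and $\tr(gf)$ and $\tr(fg)$ are both maps $\sh{R}\to\sh{S}$ in $\bT$.

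Now apply \autoref{thm:tr-cyclnat} with $Q=K=U_R$ and $P=L=U_S$, regarding $f$ as a 2-cell $U_R\odot M\to N\odot U_S$ and $g$ as a 2-cell $U_R\odot N\to M\odot U_S$ by composing with the unit isomorphisms $\fl$ and $\fr$. With these choices the lower composite $(g\odot\id_{U_S})(\id_{U_R}\odot f)$ is, up to the canonical unit and associativity constraints, just $gf$, and the upper composite $(f\odot\id_{U_S})(\id_{U_R}\odot g)$ is just $fg$. Using \autoref{thm:tr-vfunc-glob} to absorb the outer unitors, together with the coherence theorem for bicategories for the rest, the traces of these padded composites become $\tr(gf)$ and $\tr(fg)$ respectively, once their domains $\sh{U_R\odot U_R}$ and codomains $\sh{U_S\odot U_S}$ are identified with $\sh{R}$ and $\sh{S}$ via $\sh{\fl}=\sh{\fr}$.

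It then remains only to check that, under these same identifications, the vertical maps of the square in \autoref{thm:tr-cyclnat} --- the components $\theta_{U_R,U_R}$ and $\theta_{U_S,U_S}$ --- become identities. This is immediate from the unit axiom in \autoref{def:shadow}: taking $M=U_R$ there gives $\sh{\fl}\circ\theta_{U_R,U_R}=\sh{\fr}$, and since $\fl_{U_R}=\fr_{U_R}$ in any bicategory, $\theta_{U_R,U_R}$ is the identity of $\sh{U_R\odot U_R}$ (and likewise for $S$). Hence the square of \autoref{thm:tr-cyclnat} collapses to the equation $\tr(fg)=\tr(gf)$.

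The only mildly fiddly step is the second paragraph --- matching the trace of a padded composite such as $(g\odot\id_{U_S})(\id_{U_R}\odot f)$ with the trace of the bare endomorphism $gf$. This is pure coherence bookkeeping, and it is perhaps most transparent at the level of string diagrams: erasing the strings labelled $Q$, $K$, $L$, $P$ in \autoref{fig:sliding} (that is, taking those 1-cells to be units) leaves a cylinder carrying only the loop that threads through $f$ and $g$, and the deformation that slides $f$ around the back of the cylinder is exactly the asserted equality.
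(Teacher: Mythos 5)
Your proof is correct and takes exactly the route the paper intends: the corollary is stated as an immediate specialization of \autoref{thm:tr-cyclnat} to $Q=K=U_R$, $P=L=U_S$, and you correctly supply the two routine checks the paper elides, namely that $\theta_{U_R,U_R}$ is the identity (from the unit axiom of \autoref{def:shadow} together with $\fl_{U_R}=\fr_{U_R}$) and that the unitor padding is absorbed by \autoref{thm:tr-vfunc-glob} and coherence.
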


Of course, the unit $U_R$ is always its own dual.

\begin{prop}\label{thm:tr-unit}
  If $f\maps Q\odot U_R \to U_R\odot P$ is any 2-cell, we have
  $\tr(f) = \sh{f}$.
\end{prop}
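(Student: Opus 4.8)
The plan is to unwind the definition of the trace of $f\maps Q\odot U_R \to U_R\odot P$ and show that all the coherence data collapses. Recall that the trace is the composite
\[\sh{Q}\xto{\sh{\id\odot\eta}}
\sh{Q\odot U_R\odot \rdual{U_R}} \xto{\sh{f\odot\id}}
\sh{U_R\odot P\odot \rdual{U_R}} \too[\theta]
\sh{\rdual{U_R}\odot U_R\odot P} \xto{\sh{\ep \odot \id}}
\sh{P}.
\]
The key observation is that $U_R$ is self-dual with evaluation and coevaluation given (up to the unit isomorphisms) by $\fl^{-1} = \fr^{-1}\maps U_R \to U_R\odot U_R$; more precisely one takes $\eta = \fl\inv\maps U_R \to U_R \odot U_R$ and $\ep = \fl\maps U_R \odot U_R \to U_R$ (identifying $\rdual{U_R}$ with $U_R$), and the triangle identities for this dual pair are exactly the standard bicategory coherences relating $\fa$, $\fl$, $\fr$. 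Since the trace is independent of the choice of dual data, we may compute with this choice.

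First I would reduce the whole composite using this concrete dual data, so that every occurrence of $\rdual{U_R}$ becomes $U_R$, every occurrence of $\eta$ becomes $\sh{\fl\inv}$ (or $\sh{\fr\inv}$, whichever is convenient), and every occurrence of $\ep$ becomes $\sh{\fl}$. Next I would use naturality of $\theta$ together with the unit axiom for a shadow functor — the triangle in \autoref{def:shadow} asserting $\sh{\fl}\circ\theta = \sh{\fr}$ on $\sh{M\odot U_R}$ and its variants — to slide the $\theta$ past the unit isomorphisms. Concretely, the composite $\sh{\ep\odot\id}\circ\theta\circ\sh{\id\odot\eta}$ applied to appropriate unit 1-cells should, after repeated application of naturality of $\theta$ and the unit/triangle coherences, reduce to an identity on the ``$U_R$-factor,'' leaving exactly $\sh{f}$ (after absorbing the left and right unit isomorphisms $\fl,\fr$ into the identification $Q\odot U_R\iso Q$ and $U_R\odot P\iso P$). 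The cleanest way to present this is via the cylindrical string diagram of \autoref{fig:bicat-trace}: when $M = U_R$ the $M$-string is a unit string, which can be erased, and the loop around the cylinder carrying $\rdual{M} = U_R$ likewise disappears, so the picture degenerates to just $\sh{f}$ — a single box with a $Q$-string in and a $P$-string out, drawn on the cylinder, which is the string diagram for $\sh{f}$.

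The main obstacle is bookkeeping rather than conceptual: one must check that the various unit isomorphisms $\fl$, $\fr$, $\fa$ that appear when expanding $\eta$, $\ep$, and the associators implicit in writing $Q\odot U_R\odot \rdual{U_R}$ all cancel correctly against the shadow's unit axiom and against $\theta$, using \autoref{shadow-symm} where needed to handle the two-fold application of $\theta$ that arises. I would handle this either by a direct diagram chase — writing out the pentagon/triangle coherences of the bicategory alongside the hexagon and two triangles of the shadow — or, more economically, by invoking the deformation-invariance of cylindrical string diagrams established in Appendix~\ref{sec:string-diagrams}: the diagram for $\tr(f)$ with $M = U_R$ visibly deforms (erase the unit string and its loop) to the diagram for $\sh{f}$, which is the desired identity. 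I would present the string-diagram argument as the main proof and remark that it can be made fully rigorous by the results of the appendix.
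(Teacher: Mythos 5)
Your proposal is correct and matches the paper's (very terse) treatment: the paper simply observes that the string diagram is tautologous because unit 1-cells are drawn as empty space, which is exactly the degenerate-picture argument you give as your main proof; your explicit coherence chase with $\eta=\fl^{-1}$, $\ep=\fl$, naturality of $\theta$, and the shadow unit axiom is a valid way to make that remark rigorous. One small quibble: only a single instance of $\theta$ occurs in the definition of the trace, so \autoref{shadow-symm} ($\theta^2=\id$) is not actually needed --- one application of the unit triangle $\sh{\fl}\circ\theta=\sh{\fr}$ from \autoref{def:shadow} suffices.
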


The string diagram picture of \autoref{thm:tr-unit} is so tautologous
as to not be worth drawing, since unit 1-cells are represented by
empty space.

If $M$ and $N$ are right dualizable with right duals $\rdual{M}$ and
$\rdual{N}$, then $M\odot N$ is right dualizable with right dual
$\rdual{N}\odot\rdual{M}$.  (This can be a source of many dual pairs
that would otherwise be nontrivial to construct, as observed
in~\cite{maysig:pht}.)  In this case, if $f\maps Q\odot M\to M\odot P$
and $g\maps P\odot N \to N\odot L$ are two 2-cells, we have the
composite
\[(\id_M\odot g)(f\odot \id_N)\maps Q\odot M\odot N \too M\odot N\odot L.\]

\begin{prop}\label{thm:tr-vfunc}
  In the above situation, we have
  \[\tr\big((\id_M\odot g)(f\odot \id_N)\big) = \tr(g)\circ \tr(f).\]
\end{prop}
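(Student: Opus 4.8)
The plan is to prove this ``composition'' or ``multiplicativity'' property entirely by a string diagram deformation on the cylinder, exactly as one proves the analogous fact for symmetric monoidal traces. First I would set up the relevant duality data: since $M$ and $N$ are right dualizable with right duals $\rdual M$ and $\rdual N$, the 1-cell $M\odot N$ is right dualizable with right dual $\rdual N\odot\rdual M$, and its coevaluation and evaluation are the standard composites
\[
U_R \too[\eta_M] M\odot\rdual M \too[\id\odot\eta_N\odot\id] M\odot N\odot\rdual N\odot\rdual M
\]
and
\[
\rdual N\odot\rdual M\odot M\odot N \too[\id\odot\ep_M\odot\id] \rdual N\odot N \too[\ep_N] U_L.
\]
Then $\tr\big((\id_M\odot g)(f\odot\id_N)\big)$ unwinds, by definition, into a morphism $\sh{Q}\to\sh{L}$ in $\bT$ built from $\sh{\eta_M}$, $\sh{\eta_N}$, $\sh{f}$, $\sh{g}$, $\sh{\ep_M}$, $\sh{\ep_N}$, the associators, and a single instance of $\theta$ (the cyclic-rotation isomorphism appearing in the trace).

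The heart of the argument is to draw this composite as a cylindrical string diagram and observe that it can be deformed into the diagram for $\tr(g)\circ\tr(f)$. Concretely, in the diagram for $\tr\big((\id_M\odot g)(f\odot\id_N)\big)$ the string labeled $\rdual N\odot\rdual M$ wraps once around the back of the cylinder; the subdiagram involving $\eta_M$, $f$, $\ep_M$ and the $\rdual M$-strand forms, up to deformation, a smaller ``trace loop'' that can be pulled apart from the subdiagram involving $\eta_N$, $g$, $\ep_N$ and the $\rdual N$-strand. The key move is to slide the $\ep_M$-cap and $\eta_M$-cup together so that the $\rdual M$ loop is contracted, exhibiting the $M$-part of the trace, after which what remains visibly is the $N$-part of the trace applied to the output of the $M$-part. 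Since the two subdiagrams share only the $P$-strand (which is exactly where $\tr(f)$ feeds into $\tr(g)$), the composite diagram deformation-equals the vertical stacking of the diagram for $\tr(f):\sh Q\to\sh P$ on top of the diagram for $\tr(g):\sh P\to\sh L$. By the cylindrical analogue of the Joyal–Street coherence theorem (proved in the appendix), deformation-equal diagrams have equal values, which gives the stated identity.

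If one prefers a diagram-free proof, the same decomposition can be carried out algebraically: expand both sides as composites in $\bT$ of shadows of 2-cells, and use naturality of $\theta$ together with the hexagon and unit axioms from Definition~\ref{def:shadow} (and \autoref{shadow-symm}) to rewrite the single $\theta$ in the left-hand side as a composite of two $\theta$'s — one ``rotating $\rdual M$ past $M\odot P$'' and one ``rotating $\rdual N$ past $N\odot L$'' — interleaved appropriately. Naturality of $\theta$ then lets one push $\sh f$ and $\sh g$ into position so that the left-hand composite literally factors as $\sh{\ep_N\odot\id}\circ\sh{f_N\text{-part}}\circ\cdots$ followed by the corresponding $M$-part; the triangle identities for the dual pair $(M,\rdual M)$ collapse the $\rdual M$ loop, and one reads off $\tr(g)\circ\tr(f)$. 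This is the strategy actually used for the analogous monoidal statement, and it transfers verbatim once $\theta$ replaces the symmetry.

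The main obstacle I expect is bookkeeping rather than conceptual: tracking the several associativity isomorphisms $\fa$ and the precise placement of the basepoint (cut point) on the boundary circles of the cylinder, so that the single $\theta$ in the definition of the trace is correctly split into the two $\theta$'s governing the $M$- and $N$-factors. The hexagon axiom for the shadow is exactly the coherence needed to justify this splitting, and verifying that it applies in the configuration at hand — i.e.\ that the ``cyclic'' rotation of $\rdual N\odot\rdual M$ past $M\odot N\odot P\odot L$ decomposes as claimed — is the one place where care is required; everything else is routine naturality and triangle-identity manipulation, which the string-diagram calculus makes visually transparent.
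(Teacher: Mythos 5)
Your proposal is correct and matches the paper's argument: the paper proves this proposition precisely by exhibiting the two cylindrical string diagrams (its Figure for \autoref{thm:tr-vfunc}) and observing that one deforms into the other, with the appendix's coherence theorem supplying the rigor. Your description of the deformation — separating the nested $\rdual{M}$- and $\rdual{N}$-loops so the composite diagram becomes the vertical stacking of the diagrams for $\tr(f)$ and $\tr(g)$ — is exactly the intended move, and your remarks on the hexagon axiom and basepoint bookkeeping correctly identify where the formal justification lives.
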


This equality is shown graphically in \autoref{fig:vanishing};
again the deformation should be fairly easy to visualize.

In \cite{PS:mult}, this proposition was used to extend classical multiplicativity 
results for the Lefschetz number and Nielsen number to the Reidemeister trace. 

\begin{figure}[tbp]
  \centering
  \begin{tikzpicture}
    \bgcylinder{0,0}{7}{2}{0.5}{blue}{blue}
    \begin{scope}[every node/.style={fill=white}]
      \path (1,4.5) node[vert] (f) {$f$}
      +(0.5,-1) node[vert2] (g) {$g$};
    \end{scope}
    \drawtheta{f}{-1.7}{thetaM}
    \drawtheta{g}{-1.2}{thetaN}
    \draw (f |- top) -- node [ed] {$Q$} (f)
    -- node [ed] {$P$} (g)
    -- node [ed,near end] {$L$} (g |- bot);
    \begin{pgfonlayer}{background}
      \draw[redfill]
      (f) to[rrel,out=45,in=-90,looseness=0.5] node [ed] {$M$} (0.7,1)
      arc (180:0:0.5)
      -- node [ed] {$\rdual{M}$} +(0,-5)
      arc (-180:0:0.2) to [out=90,in=-90] (thetaMR)
      -- (dr') |- (bot') -| (dl') -- (thetaML)
      to [out=90,in=-110] node [ed] {$M$} (f);
      \draw[greenfill]
      (g) to[rrel,out=75,in=-90,looseness=0.5] node [ed,swap] {$N$} (0.5,2)
      arc (180:0:0.2)
      -- node [ed,swap] {$\rdual{N}$} +(0,-5)
      arc (-180:0:0.5) to [out=90,in=-90] (thetaNR)
      -- (dr') |- (bot') -| (dl') -- (thetaNL)
      to [out=90,in=-110] node [ed] {$N$} (g);
    \end{pgfonlayer}
  \end{tikzpicture}
  \qquad \raisebox{4cm}{=} \qquad
  \begin{tikzpicture}[yscale=.8]
    \bgcylinder{0,-2}{9}{1.5}{0.5}{blue}{blue}
    \begin{scope}[every node/.style={fill=white}]
      \path (1,5.3) node[vert] (f) {$f$}
      +(0,-4.2) node[vert2] (g) {$g$};
    \end{scope}
    \drawtheta{f}{-1}{thetaM}
    \drawtheta{g}{-1}{thetaN}
    \draw (f |- top) -- node [ed,swap] {$Q$} (f)
    -- node [ed,near start] {$P$} (g)
    -- node [ed,near end] {$L$} (g |- bot);
    \begin{pgfonlayer}{background}
      \draw[redfill]
      (f) to[rrel,out=45,in=-90,looseness=0.5] node [ed,swap] {$M$} (0.5,0.7)
      arc (180:0:0.2)
      -- node [ed] {$\rdual{M}$} +(0,-3.5)
      arc (-180:0:0.2) to [out=90,in=-90] (thetaMR)
      -- (dr') |- (bot') -| (dl') -- (thetaML)
      to [out=90,in=-110] node [ed,near end] {$M$} (f);
      \draw[greenfill]
      (g) to[rrel,out=45,in=-90,looseness=0.5] node [ed,swap] {$N$} (0.5,0.7)
      arc (180:0:0.2)
      -- node [ed] {$\rdual{N}$} +(0,-3.5)
      arc (-180:0:0.2) to [out=90,in=-90] (thetaNR)
      -- (dr') |- (bot') -| (dl') -- (thetaNL)
      to [out=90,in=-110] node [ed,near end] {$N$} (g);
    \end{pgfonlayer}
  \end{tikzpicture}
  \caption{String diagram picture of \autoref{thm:tr-vfunc}}
  \label{fig:vanishing}
\end{figure}

Finally, recall that any 2-cell $f\colon Q\otimes M \to M\otimes P$ has a mate $\rdual{f}\colon \rdual{M}\otimes Q \to P\otimes \rdual{M}$.
Thus, in addition to calculating the trace of $f$,  we can use the analogous notion of trace for the left dualizable object $\rdual{M}$ to calculate the trace of $\rdual{f}$.

\begin{prop}\label{thm:tr-dual}
  If $M$ is right dualizable and $f\maps Q\odot M\rightarrow M\odot P$ is any 2-cell, then
  $\tr(f)=\tr(\rdual{f})$.
\end{prop}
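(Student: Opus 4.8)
The plan is to treat this exactly as the other properties of this section: draw $\tr(f)$ and $\tr(\rdual{f})$ as labeled cylindrical string diagrams and exhibit a deformation carrying one into the other. The first thing to note is that, since $(M,\rdual{M})$ is a dual pair, $\rdual{M}$ is left dualizable with left dual $M$, and the 2-cells witnessing this are the \emph{same} $\eta$ and $\ep$. Hence the mate $\rdual{f}\maps\rdual{M}\odot Q\to P\odot\rdual{M}$ and its (left-dualizable) trace $\tr(\rdual{f})$ are built from $f$, $\eta$, $\ep$, $\theta$ and the coherence isomorphisms of \sB---the very same data out of which $\tr(f)$ is built---so we are really comparing two assemblies of one collection of cells. (Since the two mate operations are mutually inverse by the triangle identities, the assertion is symmetric in $f$ and $\rdual{f}$, which is a good sanity check.)

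Next I would draw the two composites on the cylinder. For $\tr(f)$ this is \autoref{fig:bicat-trace}: $f$ with $Q$ running up to the top boundary circle, $P$ running down to the bottom one, and the $M$-output of $f$ bent up into a coevaluation, sent around the back of the cylinder as $\rdual{M}$, and returned to the $M$-input of $f$ through an evaluation (with $\theta$ providing the cyclic rotation around the cylinder). By the convention of \autoref{fig:dualhoriz}, all of this collapses to a single \emph{horizontal} string labeled by the dual pair $(M,\rdual{M})$ that loops once around the cylinder through the vertex $f$---this is the concise picture on the right of \autoref{fig:bicat-trace}. Tipping this horizontal loop \emph{up} at $f$ recovers $\tr(f)$; tipping it \emph{down} replaces $f$ by its mate $\rdual{f}$ (cf.\ \autoref{fig:mate}) and turns the loop into precisely the feedback loop that defines $\tr(\rdual{f})$. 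Since tipping a dual-pair string is a deformation of labeled diagrams (it amounts to straightening bent strings, i.e.\ to the triangle identities), the two labeled cylindrical diagrams are deformation-equivalent, and the fundamental theorem of Appendix~\ref{sec:string-diagrams} yields $\tr(f)=\tr(\rdual{f})$.

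A purely algebraic proof is also available: unfold $\rdual{f}$ inside the definition of $\tr(\rdual{f})$, use naturality of $\theta$ (and the hexagon axiom when regrouping threefold composites) to slide the single occurrence of $\theta$ into the spot it occupies in the definition of $\tr(f)$, combine adjacent $\sh{-}$-maps by functoriality, and cancel the superfluous $\eta$--$\ep$ pair coming from the mate via a triangle identity; what remains is exactly $\tr(f)$. In either route the only real work is bookkeeping, not ideas. For the diagram argument the delicate point is the one flagged in \autoref{rmk:noemb}: tipping the dual-pair string rotates the boundary circles, so one must confirm that the basepoints (cut points) end up exactly where the definition of $\tr(\rdual{f})$ places them. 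For the algebraic argument the delicate point is keeping the associativity and unit constraints straight and applying the naturality squares for $\theta$ in the correct order; no step goes beyond what is already used in the proofs of \autoref{thm:tr-cyclnat} and \autoref{thm:tr-vfunc}.
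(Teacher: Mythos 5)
Your proposal is correct and follows essentially the same route as the paper, which proves this proposition purely by exhibiting the two labeled cylindrical diagrams (with $\rdual{f}$ unfolded into $\eta$, $f$, $\ep$) in \autoref{fig:trmate} and observing they are related by a deformation together with the triangle identities used to straighten the extra bent strings. Your "tipping the horizontal dual-pair string" gloss and the optional algebraic unwinding are just alternative presentations of that same argument.
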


This equality is pictured graphically in \autoref{fig:trmate}.

\begin{figure}[tbp]
  \centering
  \begin{tikzpicture}
    \bgcylinder{0,-1.9}{5.7}{1.5}{0.5}{blue}{blue}
    \node[vert] (f) at (1,1.5) {$f$};
    \drawtheta{f}{-1}{theta}
    \begin{pgfonlayer}{background}
      \draw[greenfill,looseness=0.7]
      (thetaL) to[rrel,out=90,in=-120] (0.5,0.5)
      -- node [ed,near start] {$M$} (f.center)
      to[rrel,out=60,in=-90] node [ed,swap,near end] {$M$} (.5,1)
      arc (180:0:0.3)
      -- node [ed] {$\rdual{M}$} ($(f)+(1.1,-2.7)$)
      arc (-180:0:0.2)
      to [out=90,in=-90] (thetaR)
      -- ++(0.1,0)
      -- (dr' |- bot') -- (dl' |- bot') -- ($(thetaL)+(-0.1,0)$) -- (thetaL);
    \end{pgfonlayer}
    \draw (f |- top) -- node[ed] {$Q$} (f);
    \draw (f) -- node[ed] {$P$} (f |- bot);
  \end{tikzpicture}
  \qquad\raisebox{3cm}{$=$}\qquad
  \begin{tikzpicture}[x={(-1,0)}]
    \bgcylinder{0,-1.9}{5.7}{2.2}{0.5}{blue}{blue}
    \node[vert] (f) at (1.5,1.5) {$f$};
    \drawtheta{f}{-1.5}{theta}
    \begin{pgfonlayer}{background}
      \draw[greenfill,looseness=0.7]
      (thetaL) to[rrel,out=90,in=-90] (0.5,0.5)
      -- node[ed,swap] {$\rdual{M}$} ++(0,1.7) arc (180:0:0.2)
      to [out=-90,in=60] node [ed,swap,near start] {$M$} (f.center)
      to[rrel,out=-120,in=90] (.5,-0.7)
      arc (-180:0:0.2)
      to[rrel,out=90,in=-90] node [ed,swap,near end] {$\rdual{M}$} (.5,1.7)
      arc (180:0:0.3)
      -- node [ed] {$M$} ++(0,-4)
      arc (-180:0:0.2)
      to [out=90,in=-90] (thetaR)
      -- ++(0.1,0)
      -- (dr' |- bot') -- (dl' |- bot') -- ($(thetaL)+(-0.1,0)$) -- (thetaL);
    \end{pgfonlayer}
    \draw (f |- top) -- node[ed] {$Q$} (f);
    \draw (f) -- node[ed,near end] {$P$} (f |- bot);
  \end{tikzpicture}
  \caption{The trace of a mate}
  \label{fig:trmate}
\end{figure}
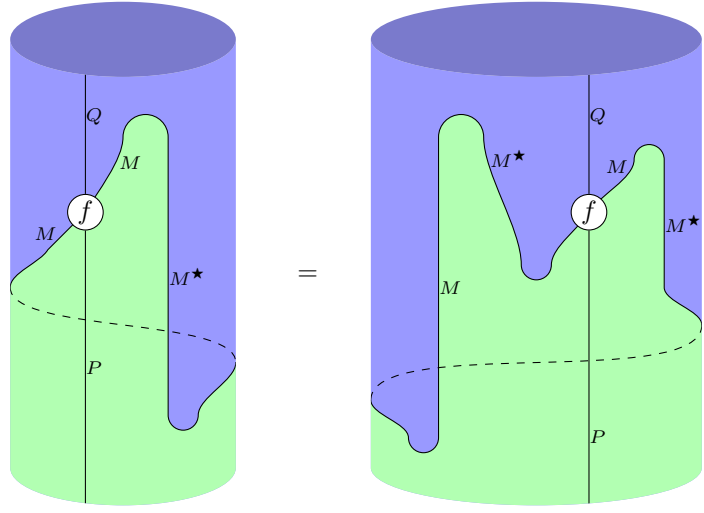

In~\cite{jsv:traced-moncat}, a list of similar properties of the canonical symmetric monoidal trace was used to define the notion of \emph{traced symmetric monoidal category}.
We could use the above properties to define an abstract notion of ``traced bicategory'', but we have no use for such a definition at present.

\section{Functoriality of trace in bicategories with shadows}
\label{sec:funct-bicat}

We now move on to the crucial property of bicategorical trace for
topological applications: its functoriality.  As mentioned in the
introduction, this is what implies refinements of the Lefschetz fixed
point theorem; see \autoref{thm:reidemeister-2}.

First of all, we recall that
a {\bf lax functor} $F\maps \sB\rightarrow \sC$ between bicategories
consists of a function $F_0$ from the objects of \sB\ to the objects
of \sC, functors \[F_{R,S}\maps \sB(R,S)\rightarrow
\sC(F_0(R),F_0(S))\] and natural transformations
\begin{align*}
  \fc\maps F_{R,S}(M)\odot F_{S,T}(N)&\too F_{R,T}(M\odot N)\\
  \fii\maps U_{F(R)}&\too F(U_R)
\end{align*}
satisfying appropriate coherence axioms.  If all maps \fii\ are
isomorphisms, we call $F$ \textbf{normal}, and if all maps \fc\ and
\fii\ are isomorphisms, we call it a \textbf{strong functor}.
We often abuse notation by writing simply $F$ instead of $F_0$ and $F_{R,S}$.

We draw string diagram pictures for functors between bicategories similar to the way we draw them for monoidal functors, by superimposing a pattern on the strings and vertices of the diagram to which the functor is applied.
However, we do \emph{not} superimpose this pattern on a colored 0-cell region unless we wish to indicate application of the functor to a 1- or 2-cell bounded by the 0-cell in question.
That is, if the 0-cell $R$ in \sB\ is denoted by the color blue, then in the context of a functor $F\colon \sB\to\sC$ where we are drawing string diagrams in \sC, a blue region
\begin{tikzpicture}[scale=0.5]
  \fill[bluefill] (0,0) rectangle (1,1);
\end{tikzpicture}
will denote the 0-cell $F(R)$---there being nothing else it could mean, since $R$ itself is not a 0-cell in \sC.
This potentially confusing convention is absolutely necessary, because otherwise we could not distinguish between the 1-cells $F(M\odot N)$ and $F M \odot F N$.
With our convention, however, we can draw them as
\begin{center}
\begin{tikzpicture}[scale=.5]
  \fill[bluefill] (0,-1) rectangle ++(1,2);
  \fill[greenfill] (1,-1) rectangle ++(2,2);
  \fill[redfill] (3,-1) rectangle ++(1,2);
  \path[dotsF] (0,-1) rectangle ++(4,2);
  \draw (1,1) -- node[ed,near end,swap] {$M$} ++(0,-2);
  \draw (3,1) -- node[ed,near end] {$N$} ++(0,-2);
\end{tikzpicture}
\qquad \raisebox{4mm}{and}\qquad
\begin{tikzpicture}[scale=.5]
  \fill[bluefill] (0,-1) rectangle ++(1,2);
  \fill[greenfill] (1,-1) rectangle ++(3,2);
  \fill[redfill] (4,-1) rectangle ++(1,2);
  \path[dotsF] (0,-1) rectangle ++(2,2);
  \path[dotsF] (3,-1) rectangle ++(2,2);
  \draw (1,1) -- node[ed,near end,swap] {$M$} ++(0,-2);
  \draw (4,1) -- node[ed,near end] {$N$} ++(0,-2);
\end{tikzpicture}
\end{center}
respectively.
The data and axioms of a lax functor can then be drawn as in Figures~\ref{fig:laxfrdata} and~\ref{fig:laxfrax}.
Note that as before, the final axiom is just naturality of \fc.

\begin{figure}[tbp]
  \centering
    \begin{tikzpicture}
      \fill[bluefill] (0,0) rectangle ++(1,2);
      \fill[greenfill] (1,0) rectangle ++(2,2);
      \fill[redfill] (3,0) rectangle ++(1,2);
      \draw (1,0) -- node[ed] {$M$} (1,2);
      \draw (3,0) -- node[ed,swap] {$N$} (3,2);
      \path[dotsF] (0,0) -- ++(4,0) -- ++(0,2) -- ++(-1.5,0)
      to[rrel,out=-90,in=-90,looseness=5] (-1,0) -- ++(-1.5,0) -- cycle;
    \end{tikzpicture}
    \hspace{2cm}
    \begin{tikzpicture}
      \fill[greenfill] (-.5,0) rectangle (1.5,2);
      \path[dotsF] (0,0) -- (1,0) -- ++(0,.7) to[rrel,out=90,in=90,looseness=3] (-1,0) -- cycle;
    \end{tikzpicture}
  \caption{The data for a lax functor}
  \label{fig:laxfrdata}
\end{figure}
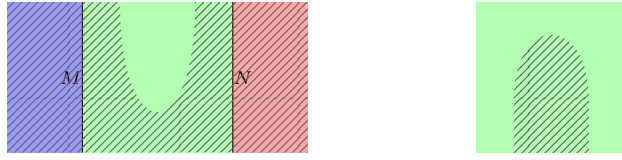

\begin{figure}[tbp]
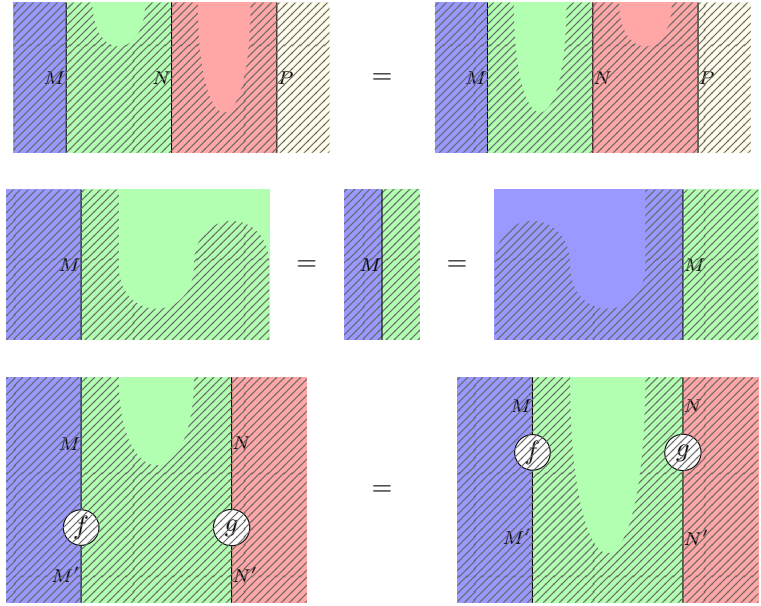

  \centering
    \begin{tikzcenter}[xscale=.7]
      \fill[bluefill] (0,0) rectangle (1,2);
      \fill[greenfill] (1,0) rectangle (3,2);
      \fill[redfill] (3,0) rectangle (5,2);
      \fill[yellowfill] (5,0) rectangle (6,2);
    \draw (1,0) -- node[ed] {$M$} (1,2);
    \draw (3,0) -- node[ed] {$N$} (3,2);
    \draw (5,0) -- node[ed,swap] {$P$} (5,2);
    \path[dotsF] (0,0) -- (6,0) -- (6,2) -- (4.5,2)
    to[out=-90,in=-90,looseness=5] (3.5,2) -- (2.5,2)
    to[out=-90,in=-90,looseness=2] (1.5,2) -- (0,2)
    -- cycle;
    \node at (7,1) {$=$};
    \begin{scope}[xshift=8cm]
      \fill[bluefill] (0,0) rectangle (1,2);
      \fill[greenfill] (1,0) rectangle (3,2);
      \fill[redfill] (3,0) rectangle (5,2);
      \fill[yellowfill] (5,0) rectangle (6,2);
      \draw (1,0) -- node[ed] {$M$} (1,2);
      \draw (3,0) -- node[ed,swap] {$N$} (3,2);
      \draw (5,0) -- node[ed,swap] {$P$} (5,2);
      \path[dotsF] (0,0) -- (6,0) -- (6,2) -- (4.5,2)
      to[out=-90,in=-90,looseness=2] (3.5,2) -- (2.5,2)
      to[out=-90,in=-90,looseness=5] (1.5,2) -- (0,2)
      -- cycle;
    \end{scope}
  \end{tikzcenter}
  \bigskip
  \begin{tikzcenter}
    \fill[bluefill] (0,0) rectangle (1,2);
    \fill[greenfill] (1,0) rectangle (3.5,2);
    \draw (1,0) -- node[ed] {$M$} +(0,2);
    \path[dotsF] (0,0) -- (3.5,0) -- (3.5,1) to[out=90,in=90,looseness=2]
    (2.5,1) to[out=-90,in=-90,looseness=2] (1.5,1) -- (1.5,2) -- (0,2) -- cycle;
    \node at (4,1) {$=$};
    \fill[bluefill] (4.5,0) rectangle +(.5,2);
    \fill[greenfill] (5,0) rectangle +(.5,2);
    \path[dotsF] (4.5,0) rectangle +(1,2);
    \draw (5,0) -- node[ed] {$M$} +(0,2);
    \node at (6,1) {$=$};
    \begin{scope}[xshift=10cm,x={(-1cm,0)}]
      \fill[greenfill] (0,0) rectangle (1,2);
      \fill[bluefill] (1,0) rectangle (3.5,2);
      \draw (1,0) -- node[ed,swap] {$M$} +(0,2);
      \path[dotsF] (0,0) -- (3.5,0) -- (3.5,1) to[out=90,in=90,looseness=2]
      (2.5,1) to[out=-90,in=-90,looseness=2] (1.5,1) -- (1.5,2) -- (0,2) -- cycle;
    \end{scope}
  \end{tikzcenter}
  \bigskip
  \begin{tikzcenter}
    \fill[bluefill] (0,0) rectangle ++(1,3);
    \fill[greenfill] (1,0) rectangle ++(2,3);
    \fill[redfill] (3,0) rectangle ++(1,3);
    \node[vert] (f) at (1,1) {$f$};
    \node[vert2] (g) at (3,1) {$g$};
    \draw (1,0) -- node[ed] {$M'$} (f) -- node[ed] {$M$} (1,3);
    \draw (3,0) -- node[ed,swap] {$N'$} (g) -- node[ed,swap] {$N$} (3,3);
    \path[dotsF] (0,0) -- ++(4,0) -- ++(0,3) -- ++(-1.5,0)
    to[rrel,out=-90,in=-90,looseness=4] (-1,0) -- ++(-1.5,0) -- cycle;
    \node at (5,1.5) {$=$};
    \begin{scope}[xshift=6cm]
    \fill[bluefill] (0,0) rectangle ++(1,3);
    \fill[greenfill] (1,0) rectangle ++(2,3);
    \fill[redfill] (3,0) rectangle ++(1,3);
    \node[vert] (f) at (1,2) {$f$};
    \node[vert2] (g) at (3,2) {$g$};
    \draw (1,0) -- node[ed] {$M'$} (f) -- node[ed] {$M$} (1,3);
    \draw (3,0) -- node[ed,swap] {$N'$} (g) -- node[ed,swap] {$N$} (3,3);
    \path[dotsF] (0,0) -- ++(4,0) -- ++(0,3) -- ++(-1.5,0)
    to[rrel,out=-90,in=-90,looseness=8] (-1,0) -- ++(-1.5,0) -- cycle;
    \end{scope}
  \end{tikzcenter}
  \caption{The axioms for a lax functor}
  \label{fig:laxfrax}
\end{figure}

\begin{defn}[\cite{kate:traces}]\label{def:shadow-functor}
  Let \sB\ and $\sC$ be bicategories with shadow functors, whose
  target categories are \bT\ and $\bZ$, respectively.  A
  \textbf{lax shadow functor} is a lax functor $F\maps \sB\to\sC$
  together with a functor $\Ft\maps \bT\to\bZ$ and a natural
  transformation
  \begin{equation}
    \phi\maps \sh{-}_{\sC} \circ F \too \Ft \circ \sh{-}_\sB
    \label{eq:shadow-functor}
  \end{equation}
  such that the following diagram commutes whenever it makes sense.
  \[\xymatrix{\sh{F(M)\odot F(N)} \ar[r]^\theta\ar[d]_\fc & \sh{F(N)\odot F(M)} \ar[d]^\fc\\
    \sh{F(M\odot N)}\ar[d]_\phi & \sh{F(N\odot M)} \ar[d]^\phi\\
    \Ft\,\sh{M\odot N} \ar[r]_{\Ft(\theta)} & \Ft\,\sh{N\odot M}.}
  \]
  If $F$ is a strong functor and \phi\ is an isomorphism, we
  call $F$ a \textbf{strong shadow functor}.
\end{defn}

If $F$ is a lax shadow functor, we depict the functor \Ft\ by covering an entire cylinder in the pattern that denotes $F$, and we depict the transformation $\phi$ and its axioms as shown in \autoref{fig:shfr}.

\begin{figure}[tbp]
  \centering
  \subfigure[Data]{\label{fig:shfrdata}
  \begin{tikzpicture}[scale=.8]
    \bgcylinder{0,0}{3}{1.2}{.3}{blue}{blue}
    \draw (top) -- node [ed,near start] {$M$} (bot);
    \path[dotsF] ($(ur)!.5!(top)$) coordinate (start) --
    ($(ul)!.5!(top)$) -- ++(0,-1)
    to[out=-90,in=90] ($(dl)!.2!(ul)$) -- (dl)
    |- (bot) -| (dr) -- ($(dr)!.2!(ur)$)
    to[out=90,in=-90] ($(start)+(0,-1)$)
    -- cycle;
  \end{tikzpicture}}
  \hspace{2cm}
  \subfigure[Naturality]{
  \begin{tikzpicture}[scale=.8]
    \bgcylinder{0,0}{4}{1.2}{.3}{blue}{blue}
    \node[vert] (f) at (1.2,1) {$f$};
    \draw (top) -- node [ed,near start] {$M$} (f) -- (bot);
    \path[dotsF] ($(ur)!.5!(top)$) coordinate (start) --
    ($(ul)!.5!(top)$) -- ++(0,-1)
    to[out=-90,in=90] ($(dl)!.4!(ul)$) -- (dl)
    |- (bot) -| (dr) -- ($(dr)!.4!(ur)$)
    to[out=90,in=-90] ($(start)+(0,-1)$)
    -- cycle;
  \end{tikzpicture}
  \quad\raisebox{2cm}{$=$}\quad
  \begin{tikzpicture}[scale=.8]
    \bgcylinder{0,0}{4}{1.2}{.3}{blue}{blue}
    \node[vert] (f) at (1.2,3) {$f$};
    \draw (top) -- (f) -- node [ed,near end] {$M$} (bot);
    \path[dotsF] ($(ur)!.5!(top)$) coordinate (start) --
    ($(ul)!.5!(top)$) -- ++(0,-2)
    to[out=-90,in=90] ($(dl)!.15!(ul)$) -- (dl)
    |- (bot) -| (dr) -- ($(dr)!.15!(ur)$)
    to[out=90,in=-90] ($(start)+(0,-2)$)
    -- cycle;
  \end{tikzpicture}}
  \\
  \subfigure[Coherence axiom]{\label{fig:shfrax}
  \begin{tikzpicture}[scale=.8]
    \bgcylinder{0,-.5}{5}{1.6}{.3}{blue}{blue}
    \begin{pgfonlayer}{foreground}
      \drawtheta{0,0}{1.5}{theta}
    \end{pgfonlayer}
    \draw[greenfill] ($(ur)!.6!(top)$) coordinate (a)
    -- ($(ul)!.6!(top)$) -- node [ed,swap] {$M$} ++(0,-2)
    to[out=-90,in=90] (thetaL) -- ++(-.1,0) -- (dl')
    |- ($(dl')!.7!(bot')$) to[out=90,in=-90] ($(a)+(0,-2)$)
    --  node [ed,swap] {$N$} (a);
    \draw[greenfill] (thetaR) .. controls +(0,-.5) and +(0,1) ..
    ($(dr')!.7!(bot')$) -| (dr') -- cycle;
    \path[dotsF] ($(ur)!.3!(top)$) coordinate (start) --
    ($(ur)!.8!(top)$) to[out=-90,in=-90,looseness=5]
    ($(ul)!.8!(top)$) -- ($(ul)!.3!(top)$) -- ++(0,-1.2)
    to[out=-90,in=90,looseness=1] ($(dl)+(0,2.7)$) -- (dl)
    |- (bot) -| (dr) -- ($(dr)+(0,2.7)$)
    to[out=90,in=-90] ($(start)+(0,-1.2)$)
    -- cycle;
  \end{tikzpicture}
  \quad\raisebox{2.3cm}{$=$}\quad
  \begin{tikzpicture}[scale=.8]
    \bgcylinder{0,-.5}{5}{1.6}{.3}{green}{blue}
    \begin{pgfonlayer}{foreground}
      \drawtheta{0,0}{3.2}{theta}
    \end{pgfonlayer}
    \draw[bluefill] ($(ul)!.6!(top)$) to[out=-90,in=90] node[ed] {$M$} (thetaL)
    -- ++(-.1,0) -- (ul') -- cycle;
    \draw[bluefill] ($(ur)!.6!(top)$) to[out=-90,in=90] node[ed,near start] {$N$}
    ($(dl)!.6!(bot)+(0,2)$) coordinate (Nbend)
    -- ++(0,-5) -| ($(dr')!.6!(bot')$) -- ++(0,2.2) coordinate (Mbend)
    to[out=90,in=-90] (thetaR) -| (ur') -- cycle;
    \path[dotsF] ($(ul)!.3!(top)$) to[out=-90,in=90] ($(thetaL)+(-.7,0)$) -- ++(.7,-.6)
    to[out=90,in=-90] ($(ul)!.9!(top)$) -- cycle;
    \path[dotsF] ($(ur)!.8!(top)$) to[out=-90,in=90] ($(dl)!.6!(bot)+(-.4,2)$)
    -- ++(0,-.7) to[out=-90,in=90] ($(dl)+(0,.3)$) -- (dl)
    |- (bot) -| (dr) -- ++(0,.3) to[out=90,in=-90] ($(dr)!.6!(bot)+(.4,1.3)$)
    -- ++(0,.5) to[out=90,in=-90] ($(thetaR)+(.7,0)$) -- ++(-.7,.5)
    to[out=-90,in=90] ($(Mbend)+(-.3,0)$)
    to[rrel,out=-90,in=-90,looseness=3] (-.7,0)
    to[out=90,in=-90,looseness=1] ($(ur)!.6!(top)+(.3,0)$) -- cycle;
  \end{tikzpicture}}
  \caption{A lax shadow functor}
  \label{fig:shfr}
\end{figure}
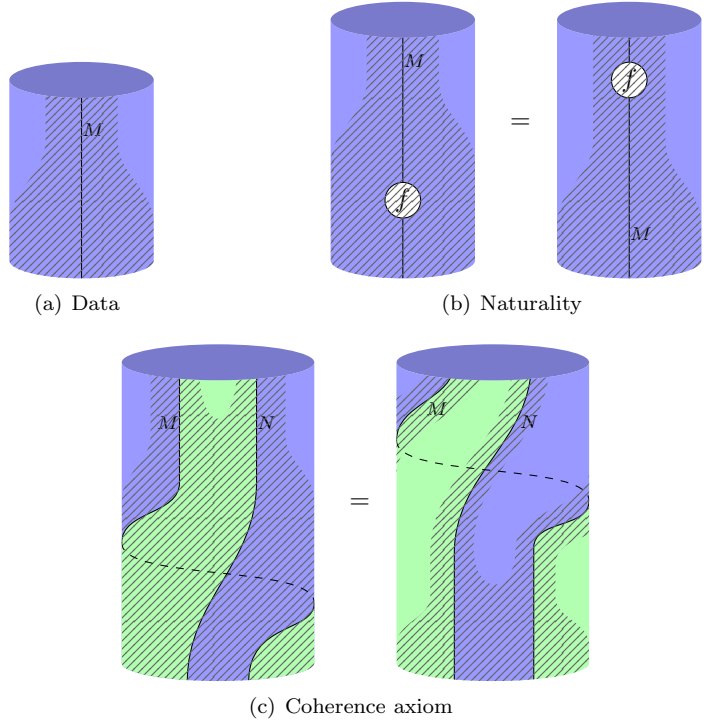

\begin{prop}\label{thm:bicat-funct-pres-tr}
  Let $F\maps \sB\rightarrow \sC$ be a lax shadow functor and $M\maps
  R\hto S$ a right dualizable 1-cell in $\sB$.
  \begin{enumerate}
  \item If $\fc\maps F(M)\odot F(\rdual{M})\rightarrow F(M\odot \rdual{M})$ and $\fii\maps U_F(R)\rightarrow F(U_R)$ are isomorphisms, then $F(M)$ is dualizable with dual $F(\rdual{M})$.\label{item:bfpt1}
  \item If, furthermore, $\fc_{M,P}\maps F(M)\odot F(P)\rightarrow F(M\odot P)$ is an isomorphism, then for any $f\maps Q\odot M\rightarrow M\odot P$, the following square commutes:\label{item:bfpt2}
    \[\xymatrix@C=1.5in{\sh{F(Q)}\ar[r]^{\tr(\fc_{M,P}^{-1}\circ F(f)\circ \fc_{Q,M})}
      \ar[d]_\phi &\sh{F(P)}\ar[d]^\phi\\
      \Ft\,\sh{Q}\ar[r]^{\Ft(\tr(f))}&\Ft\,\sh{P}.}\]
  \end{enumerate}
\end{prop}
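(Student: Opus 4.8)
The plan is to follow the proof of the monoidal case, \autoref{thm:funct-pres-dual}, inserting the shadow coherence axiom of \autoref{def:shadow-functor} at the one place where it is needed. For part~\ref{item:bfpt1}, I would equip $F(M)$ with the dual $F(\rdual{M})$ by taking as coevaluation and evaluation the composites
\[
U_{F(R)} \xto{\fii} F(U_R) \xto{F(\eta)} F(M\odot\rdual{M}) \xto{\fc^{-1}} F(M)\odot F(\rdual{M})
\]
\[
F(\rdual{M})\odot F(M) \xto{\fc} F(\rdual{M}\odot M) \xto{F(\ep)} F(U_S) \xto{\fii^{-1}} U_{F(S)},
\]
which are defined because the relevant components of \fc\ and \fii\ are invertible. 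Checking the two triangle identities for this pair amounts, after cancelling each \fc\ and \fii\ against its inverse, to applying $F$ to the triangle identities for $(\eta,\ep)$ in \sB; the rearrangement needed to perform those cancellations is exactly the coherence of the lax functor (the associativity and unit axioms of \autoref{fig:laxfrax}). This is routine and entirely parallel to the monoidal argument; alternatively one may simply cite the corresponding statement for lax functors, e.g.~\cite[16.4]{maysig:pht}.

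For part~\ref{item:bfpt2}, by part~\ref{item:bfpt1} together with the fact that the bicategorical trace does not depend on the chosen dual data, I may compute $\tr(\fc_{M,P}^{-1}\circ F(f)\circ\fc_{Q,M})$ using the coevaluation and evaluation just constructed, and then chase the resulting composite around the square in three moves. First, substitute those formulas and repeatedly use naturality of \fc\ (the last axiom of \autoref{fig:laxfrax}) and the functor coherence to collect all instances of \fc, $\fc^{-1}$, \fii, $\fii^{-1}$ and cancel them in pairs --- including against the \fc's occurring in the conjugated 2-cell itself --- so that what survives is $\sh{-}_\sC$ applied to $F$ of each of the four constituent 2-cells of $\tr(f)$, together with a single copy of $\theta_\sC$ acting on $\sh{F(M\odot P)\odot F(\rdual{M})}$; grouping $F(M)\odot F(P)$ into $F(M\odot P)$ at this point is where the invertibility of $\fc_{M,P}$ is used. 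Second, apply the coherence axiom of the lax shadow functor (the square in \autoref{def:shadow-functor}, drawn in \autoref{fig:shfrax}) to this $\theta_\sC$, replacing it by $\Ft(\theta_\sB)$ at the cost of one further \fc\ and one $\phi$. Third, use naturality of $\phi\maps \sh{-}_\sC\circ F \to \Ft\circ\sh{-}_\sB$ to move $\phi$ past each remaining factor $\sh{F(g)}$, turning it into $\Ft\sh{g}$; the composite that results is manifestly $\Ft(\tr(f))\circ\phi$, which is the other side of the square.

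The essential point --- and the only place the shadow structure genuinely intervenes --- is the second move, so the main obstacle is arranging the first move so that the tensor factors sit exactly as the coherence axiom of \autoref{def:shadow-functor} demands, with every stray \fc\ and \fii\ already eliminated; the associators and unitors suppressed in the notation have to be tracked carefully here. This is most cleanly carried out with the cylindrical string diagrams, in the spirit of \autoref{fig:prestr}: the left-hand side of the square is the trace cylinder of $\fc_{M,P}^{-1}\circ F(f)\circ\fc_{Q,M}$ with the $F$-pattern painted only on its strings and vertices and a $\phi$-band along the output boundary, while the right-hand side is the whole trace cylinder of $f$ drawn in the $F$-pattern, again with a $\phi$-band; the deformation grows the $F$-region until it covers the entire cylinder (legitimate precisely because the relevant \fc's and \fii's are isomorphisms) and slides the $\phi$-band, the shadow coherence axiom being exactly what licenses the $\phi$-band to cross the region where a string wraps around the back of the cylinder. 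As usual, this graphical proof stands as shorthand for the diagram chase above.
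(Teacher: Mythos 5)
Your proposal is correct and takes essentially the same approach as the paper: part~(i) is proven exactly as in the symmetric monoidal case, and for part~(ii) the paper gives precisely the three-move graphical argument you describe (cancel the $\fc$'s and $\fii$'s using naturality and the lax-functor coherence, apply the shadow-functor coherence axiom of \autoref{def:shadow-functor} at the point where the string wraps around the cylinder, then slide $\phi$ to the top by naturality). The only cosmetic difference is which instance of $\theta$ the coherence axiom is applied to --- the paper uses the splitting into $M$ and $P\odot\rdual{M}$ --- which is immaterial by the hexagon axiom.
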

\begin{proof}
  Statement~\ref{item:bfpt1} is proven exactly as in the symmetric monoidal case, while a graphical proof of statement~\ref{item:bfpt2} is shown in \autoref{fig:bicatfunc}.
  Since this is one of the centrally important facts about bicategorical traces, and since string diagrams for functors have not yet been formalized, we have chosen to show two intermediate steps of this proof.
  To get from the first diagram to the second, we apply naturality to slide an instance of \fc\ and of its inverse up to the top and bottom, respectively, and then apply the axioms from \autoref{fig:laxfrax} to cancel them with an instance of \fii\ and its inverse, respectively.
  The step from the second to the third diagram is precisely the axiom in Figure~\ref{fig:shfrax}, applied to the 1-cells $P\odot \rdual{M}$ and $M$.
  Finally, to get to the final diagram we cancel an instance of \fc\ with its inverse (removing the ``hole'' in the middle), and apply naturality to slide \phi\ all the way up to the top.
\end{proof}

\begin{figure}[tbp]
  \centering
  \begin{tikzpicture}[scale=.7]
    \bgcylinder{0,-3.5}{7.3}{2}{0.4}{blue}{blue}
    \node[vert] (f) at (1,1.5) {$f$};
    \drawtheta{f}{-1}{theta}
    \begin{pgfonlayer}{background}
      \draw[greenfill,looseness=0.7]
      (thetaL) to[rrel,out=90,in=-120] (0.5,0.5)
      -- node [ed,near start] {$M$} (f.center)
      to[rrel,out=60,in=-90] node [ed,swap,near end] {$M$} (.8,1)
      arc (180:0:0.4)
      -- node [ed] {$\rdual{M}$} ++(0,-3.7)
      arc (-180:0:0.4) coordinate (c)
      to [out=90,in=-90] (thetaR)
      -- ++(0.1,0)
      -- (dr' |- bot') -- (dl' |- bot') -- ($(thetaL)+(-0.1,0)$) -- (thetaL);
    \end{pgfonlayer}
    \draw (f |- top) -- node[ed] {$Q$} (f);
    \draw (f) -- node[ed] {$P$} (f |- bot);
    \path[dotsF] (f |- top) -- ++(-.3,0)
    to[out=-90,in=70] ($(f)+(-.5,0)$)
    to[out=-110,in=90] ($(thetaL)+(0,.3)$)
    -- ++(0,-.6) -- ($(f)+(-.3,-.6)$)
    -- ($(f |- bot)+(-.3,2)$)
    to[out=-90,in=90] ($(dl)+(0,.3)$)
    -- (dl') |- (bot') -| (dr') -- ++(0,.3)
    to[out=90,in=-90] ($(f |- bot)+(.3,2)$)
    -- ($(f)+(.3,-.6)$) to[out=70,in=-90] ($(f)+(1,1)$)
    arc (180:0:.2) -- ++(0,-3.7)
    arc (-180:0:.6) -- ($(thetaR)+(0,-.3)$) -- ++(0,.6)
    to[out=-110,in=90,looseness=1.3] ($(c)+(-.2,0)$) arc (0:-180:.2) -- ++(0,3.7) arc (0:180:.6)
    -- ($(f)+(.3,.6)$) -- ($(f |- top)+(.3,0)$) -- cycle;
  \end{tikzpicture}
  \;\raisebox{3cm}{$=$}\;
  \begin{tikzpicture}[scale=.7]
    \bgcylinder{0.4,-3.5}{7.3}{1.8}{0.4}{blue}{blue}
    \node[vert] (f) at (1.3,1.5) {$f$};
    \drawtheta{f}{-1}{theta}
    \begin{pgfonlayer}{background}
      \draw[greenfill,looseness=0.7]
      (thetaL) to[rrel,out=90,in=-120] (0.5,0.5)
      -- (f.center)
      to[rrel,out=60,in=-90] node [ed,swap,near end] {$M$} (.6,1)
      arc (180:0:0.3)
      -- node [ed] {$\rdual{M}$} ++(0,-4) node[coordinate] (b) {}
      -- ++(0,-1.5)
      arc (-180:0:0.4) -- node[ed,swap,near start] {$M$} ++(0,1.5) coordinate (c)
      to [out=90,in=-90] (thetaR)
      -- ++(0.1,0)
      -- (dr' |- bot') -- (dl' |- bot') -- ($(thetaL)+(-0.1,0)$) -- (thetaL);
    \end{pgfonlayer}
    \draw (f |- top) -- node[ed] {$Q$} (f);
    \draw (f) -- node[ed] {$P$} (f |- bot);
    \path[dotsF] (f |- top) -| ($(b)+(.2,0)$)
    arc (-180:0:.2) -- ($(thetaR)+(0,.3)$) -- ++(0,-.6)
    -- ($(c)+(.2,0)$) -- ++(0,-.3) to[out=-90,in=90]
    ($(dr)+(0,.8)$) |- (bot) -| ($(dl)+(0,.8)$)
    to[out=90,in=-90] ($(f)+(-.3,-3.1)$) -- ++(0,2.5) -- ($(thetaL)+(0,-.3)$) -- ++(0,.6)
    to[out=40,in=-90,looseness=.7] ($(f |- top)+(-.3,0)$)
    -- cycle;
  \end{tikzpicture}
  \;\raisebox{3cm}{$=$}\;
  \begin{tikzpicture}[scale=.7]
    \bgcylinder{0,-3.5}{7.3}{2}{0.4}{blue}{blue}
    \node[vert] (f) at (1.7,1.8) {$f$};
    \drawtheta{f}{-3.3}{theta}
    \begin{pgfonlayer}{background}
      \draw[greenfill,looseness=0.7]
      (thetaL) to[rrel,out=90,in=-90] (.7,.5)
      -- ++(0,1.4) to[out=90,in=-150] node [ed,near start] {$M$} (f.center)
      to[rrel,out=60,in=-90] node [ed,swap,near end] {$M$} (.6,1)
      arc (180:0:0.3)
      -- node [ed] {$\rdual{M}$} ++(0,-3) node[coordinate] (b) {}
      -- ++(0,-2.7)
      arc (-180:0:0.3) coordinate (c)
      to [out=90,in=-90] (thetaR)
      -- ++(0.1,0)
      -- (dr' |- bot') -- (dl' |- bot') -- ($(thetaL)+(-0.1,0)$) -- (thetaL);
    \end{pgfonlayer}
    \draw (f |- top) -- node[ed] {$Q$} (f);
    \draw (f) -- node[ed] {$P$} (f |- bot);
    \path[dotsF] (f |- top) -| ($(b)+(.2,0)$)
    to[out=-90,in=90] ($(dr)+(0,2.5)$) |- (bot) -| ($(dl)+(0,2.5)$)
    to[rrel,out=90,in=-90] (.5,.4)
    to[out=90,in=-90,looseness=2] ($(f |- top)+(-.4,0)$) -- cycle;
    \path[greenfill] (f) ++(-.2,-.4) node[coordinate] (fl) {}
    -- ++(0,-1) arc (0:-180:.3)
    to[out=90,in=-90] (fl);
  \end{tikzpicture}
  \;\raisebox{3cm}{$=$}\;
  \begin{tikzpicture}[scale=.7]
    \bgcylinder{0,-1.9}{7.3}{1.5}{0.4}{blue}{blue}
    \node[vert] (f) at (1,1.5) {$f$};
    \drawtheta{f}{-1}{theta}
    \begin{pgfonlayer}{background}
      \draw[greenfill,looseness=0.7]
      (thetaL) to[rrel,out=90,in=-120] (0.5,0.5)
      -- node [ed,near start] {$M$} (f.center)
      to[rrel,out=60,in=-90] node [ed,swap,near end] {$M$} (.5,1)
      arc (180:0:0.3)
      -- node [ed] {$\rdual{M}$} ($(f)+(1.1,-2.7)$)
      arc (-180:0:0.2)
      to [out=90,in=-90] (thetaR)
      -- ++(0.1,0)
      -- (dr' |- bot') -- (dl' |- bot') -- ($(thetaL)+(-0.1,0)$) -- (thetaL);
    \end{pgfonlayer}
    \draw (f |- top) -- node[ed] {$Q$} (f);
    \draw (f) -- node[ed] {$P$} (f |- bot);
    \path[dotsF] (f |- top) -- ++(-.3,0) -- ++(0,-.5)
    to[out=-90,in=90] ($(ul)+(0,-2)$)
    -- (dl) |- (bot) -| (dr) -- ($(ur)+(0,-2)$)
    to[out=90,in=-90] ($(f |- top)+(.3,-.5)$) -- ++(0,.5) -- cycle;
  \end{tikzpicture}
  \caption{Functoriality of the bicategorical trace}
  \label{fig:bicatfunc}
\end{figure}
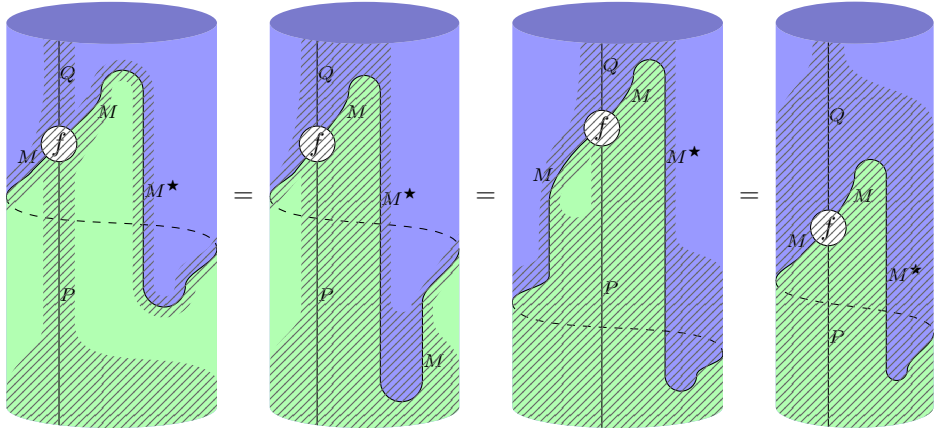

As in the symmetric monoidal case, in the situation of~\ref{item:bfpt1} above, we say that $F$ \textbf{preserves} the dual $\rdual{M}$ of $M$.

\begin{eg}\label{sym_shadow_functor}
  If \bC\ and \bD\ are symmetric monoidal categories, and we equip
  $\bccat{\bC}$ and $\bccat{\bD}$ with their canonical shadows, then any lax
  monoidal functor $F\maps \bC\to\bD$ gives a lax shadow functor
  $\bccat{\bC}\to\bccat{\bD}$ such that \phi\ is an isomorphism.  In this
  case \autoref{thm:bicat-funct-pres-tr} reduces to
  \autoref{thm:funct-pres-tr}.
\end{eg}

\begin{eg}
  Homology is a lax shadow functor
  \[\calCh \to \calGrMod,\]
  where $\calGrMod$ is like \calMod\ except that its 1-cells are
  \emph{graded} bimodules.
  The K\"unneth theorem holds for rings that are not necessarily
  commutative, so \autoref{thm:bicat-funct-pres-tr} implies that
  $\tr(H_*(f))=H_*(\tr(f))$ for any map $f\colon Q_*\odot C_*\rightarrow 
  C_*\odot P_*$
  of a chain complex of $S$-$R$-modules, as long as $C_p$ and $H_p(C_*)$
  are projective for each $p$ and $C_*$ is finitely generated.

\end{eg}

\begin{eg}\label{thm:reidemeister-2}
  The rational chain complex functor induces a lax shadow functor
  \[C_*\colon \Ho(\bgpsp)\rightarrow \Ho(\calCh).\]
  On 0-cells we have $C_*(G)=\bbQ G$, the group ring, while on 1-cells and 2-cells it is the usual rational chain complex functor, equipped with the induced actions by group rings.
  The shadow action $(C_*)_{\tr} \colon \Ho(\Sp)\to \Ho(\bCh{\mathbb{Z}})$ is simply the ordinary rational chain complex.

  If $f\colon X\rightarrow X$ is an endomorphism of a closed smooth manifold
  or a compact ENR, as in \autoref{eg:reidemeister-1}, then
  \[C_*(\tilde{f})\colon C_*(\tilde{X})\rightarrow C_*(\tilde{X})\]
  is $\psi$-equivariant for the induced map $\psi\colon \bbQ\pi_1(X)\rightarrow \bbQ\pi_1(X)$.
  Therefore, as in \autoref{eg:equivar-trace}, we can regard it as a 2-cell (also denoted $C_*(\tilde{f})$)
  \[ C_*(\tilde{X}) \to C_*(\tilde{X}) \odot \bcd{\psi}{\bbQ\pi_1(X)}. \]
  Its trace in $\Ho(\calCh)$ is then a map $\bbQ\rightarrow \bbQ\sh{\pi_1(X)_\psi}$.
  This is another way to define the Reidemeister trace;
  \autoref{thm:bicat-funct-pres-tr} shows that
  \[\tr(C_*(\tilde{f})) = C_*(\tr(\tilde{f})).\]
  (Recall that $\tr(\tilde{f})$ was what we called the Reidemeister
  trace in \autoref{eg:reidemeister-1}.)  This is a more refined
  version of the Lefschetz fixed point theorem; see \cite{kate:traces}.

  We might like to be able to combine this example with the previous
  one and calculate the Reidemeister trace at the level of homology,
  but unfortunately the resulting modules over the group ring are
  rarely projective, so the K\"unneth theorem generally fails.
\end{eg}

\section{2-functoriality of trace in bicategories with shadows}
\label{sec:transf}

We observed in \S\ref{sec:traces} that in the symmetric monoidal case, traces are respected not only by monoidal functors, but by monoidal transformations (\autoref{thm:trans-pres-tr}).
To conclude the main portion of the paper, we would like to prove a version of this for bicategories, but we have to be careful regarding what sort of transformation to consider.
It turns out that the appropriate type is the following.

\begin{defn}
  Let \sB\ and \sC\ be bicategories and $F,G\colon \sB \to \sC$ be lax functors.  A \textbf{conjunctional transformation} $\al\colon F\to G$ consists of the following.
  \begin{enumerate}
  \item For each 0-cell $R\in \sB$, a right dualizable 1-cell $\al_{R}\colon F R \hto G R$ in \sC, with right dual $\ral R$.
  \item For each 1-cell $M\colon R\hto S$ in \sB, a 2-cell $\al_M\colon F(M) \odot \al_S \to  \al_R \odot G(M)$ in \sC, with consequent mate $\ral{M}\colon \ral{R}\odot F(M) \to G(M)\odot \ral{S}$.
  \item Some coherence axioms are satisfied.  (See below for the axioms.)
  \end{enumerate}
\end{defn}

\begin{rmk}\label{rmk:why-conjunctional}
  To motivate this definition, we consider the main example we want to apply it to.
  Recall from \autoref{thm:reidemeister-2} that we have a lax shadow functor
  \[C_*(-;\bbQ)\colon \Ho(\bgpsp)\rightarrow \Ho(\calCh)\]
  which enables us to compute the Reidemeister trace as a bicategorical trace in $\Ho(\calCh)$.
  In \autoref{eg:intrat-lefschetz} we used \autoref{thm:trans-pres-tr} to conclude that the ordinary Lefschetz number is the same whether computed with integral or rational coefficients, so we would like a similar statement in the bicategorical situation.

  We certainly have another lax shadow functor
  \[C_*(-;\bbZ)\colon \Ho(\bgpsp)\rightarrow \Ho(\calCh)\] 
  defined using integral chain complexes.
  (Since we are working only at the level of chain complexes, as in the remark at the end of \autoref{eg:intrat-lefschetz}, we don't need to quotient by torsion to make this a strong functor.)
  Thus, when looking for a definition of transformation, we should ask what sort of transformation the inclusion $\bbZ\rightarrow \bbQ$ induces from $C_*(-;\bbZ)$ to $C_*(-;\bbQ)$.

  The first obvious thing that is induced is a collection of ring homomorphisms $\alpha_G\colon \bbZ G \to \bbQ G$, for any 0-cell $G$ in \bgpsp.
  A ring homomorphism $\psi\colon R\to S$ is not itself any sort of cell in \calCh, but it does induce a dual pair of bimodules $(\bc\psi S,\bcd\psi S)$.
  (We have already met these bimodules in \autoref{eg:equivar-trace}, and some analogous objects in \autoref{eg:reidemeister-1}, where they supplied the 1-cells $P$ and $Q$ by which bicategorical traces were ``twisted.'')
  This motivates the choice to take dual pairs as the 1-cell components of a conjunctional transformation.

  Next, for any 1-cell $M\colon G\hto H$ in \bgpsp, we have a map
  \[\alpha_M\colon C_*(M;\bbZ)\to C_*(M;\bbQ)\]
  of chain complexes.
  This is not a 2-cell in any hom-category of \calCh, but instead is an ``$\alpha_G$-$\alpha_H$-equivariant map'' (i.e.\ it satisfies $\alpha_M(x\cdot m \cdot y) = \alpha_G(g)\cdot \alpha_M(m) \cdot \alpha_H(y)$).
  Such an equivariant map can be identified with a map
  \[C_*(M;\bbZ) \;\too \;\bc{i_G}{\bbQ G} \;\odot\; C_*(M;\bbQ) \;\odot\; \bcd{i_H}{\bbQ H}.\]
  Finally, the mate of such a map is a morphism
  \[C_*(M;\bbZ) \;\odot\; \bc{i_H}{\bbQ H} \;\too\; \bc{i_G}{\bbQ G} \;\odot\; C_*(M;\bbQ)\]
  of $\bbZ G$-$\bbQ H$-bimodules, and this provides the 2-cell components of a conjunctional transformation.

  As we will see later, this seemingly \emph{ad hoc} definition also provides exactly the right structure necessary to prove 2-functoriality of traces.
  It can also be shown to arise naturally from a natural sort of transformation for a class of ``fibrant'' double categories; see~\cite{shulman:frbi,csmb}.
\end{rmk}

As usual, the coherence axioms of a conjunctional transformation are most naturally visualized in string diagram notation.
Just like in the symmetric monoidal case, we picture such a transformation as a `membrane' dividing the $F$-region from the $G$-region.
However, now the membrane itself is actually a string: a horizontally drawn string representing the dual pair $(\al_{R},\ral{R})$, as described in \S\ref{sec:bicat-traces}.
Similarly, the locations where other strings cross over the membrane represent the 2-cell components of $\al$, although we usually do not draw them as nodes.
Note that according to the convention for drawing functors established in \S\ref{sec:funct-bicat}, a colored region with no pattern can equally denote a 0-cell in the image of $F$ or in the image of $G$, although in practice there should never be any ambiguity about which is intended.

With these conventions in place, \autoref{fig:ctdata} displays the data of a conjunctional transformation, and \autoref{fig:ctax} shows the coherence axioms.

\begin{figure}[hbt]
  \centering
  \begin{tikzpicture}[scale=1.5]
    \coordinate (lctr) at (0,0);
    \clip (.1,-.9) rectangle (1.9,.9);
    \filldraw[bluefill]
    (lctr) -- +(0,1) -- +(2,1) -- +(2,0) -- node [ed,swap,bluefill,circle] {$\al_R$} (lctr);
    \node [ed,anchor=north west,color=blue,bluefill] at (0.1,.9) {$F(R)$};
    \filldraw[bluefill]
    (lctr) -- +(0,-1) -- +(2,-1) -- +(2,0) -- (lctr);
    \node [ed,anchor=south west,color=blue,bluefill] at (0.1,-.9) {$G(R)$};
  \end{tikzpicture}
  \qquad
  \begin{tikzpicture}[x=1.5cm]
    \clip (-1.4,-1.4) rectangle (1.4,1.4);
    \coordinate (ctr) at (0,0);
    \filldraw[greenfill]
    (ctr) -- node [ed,swap,greenfill] {$M$}
    +(0,1.5) -- +(1.5,1.5) -- +(1.5,0) -- node [ed,near start,above=2pt] {$\al_S$} (ctr);
    \node [ed,anchor=north east,color=green!50!black,greenfill] at (1.4,1.4) {$F(S)$};
    \filldraw[greenfill]
    (ctr) -- node [ed,greenfill] {$M$}
    +(0,-1.5) -- +(1.5,-1.5) -- +(1.5,0) -- (ctr);
    \node [ed,anchor=south east,color=green!50!black,greenfill] at (1.4,-1.4) {$G(S)$};
    \filldraw[bluefill]
    (ctr) -- +(0,1.5) -- +(-1.5,1.5) -- +(-1.5,0) -- node [ed,near start,above=2pt] {$\al_R$} (ctr);
    \node [ed,anchor=north west,color=blue,bluefill] at (-1.4,1.4) {$F(R)$};
    \filldraw[bluefill]
    (ctr) -- +(0,-1.5) -- +(-1.5,-1.5) -- +(-1.5,0) -- (ctr);
    \node [ed,anchor=south west,color=blue,bluefill] at (-1.4,-1.4) {$G(R)$};
    \path[dotsF] (ctr) -- +(-.7,0) rectangle +(.7,1.5);
    \path[dotsG] (ctr) -- +(-.7,-1.5) rectangle +(.7,0);
  \end{tikzpicture}
  \caption{The data of a conjunctional transformation}
  \label{fig:ctdata}
\end{figure}
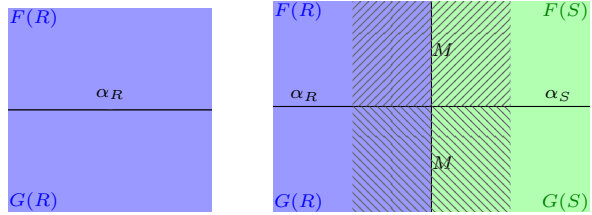

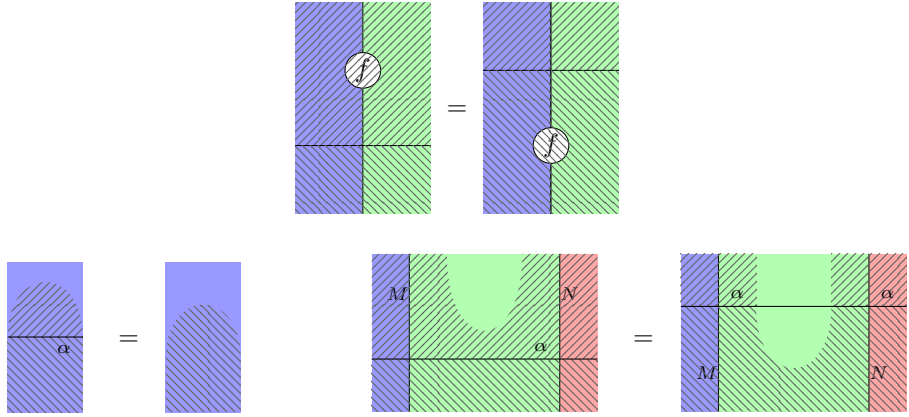
\begin{figure}[hbt]
  \centering
  \begin{tikzpicture}
    \clip (0.1,-.9) rectangle (1.9,1.9);
    \filldraw[bluefill] (0,-1) rectangle (1,2);
    \filldraw[greenfill] (1,-1) rectangle (2,2);
    \node[vert] at (1,1) {$f$};
    \filldraw[dotsF] (0,0) rectangle (2,2);
    \fill[dotsG] (0,0) rectangle (2,-1);
  \end{tikzpicture}
  \;\raisebox{1.3cm}{=}\;
  \begin{tikzpicture}
    \clip (0.1,-.9) rectangle (1.9,1.9);
    \filldraw[bluefill] (0,-1) rectangle (1,2);
    \filldraw[greenfill] (1,-1) rectangle (2,2);
    \node[vert] at (1,0) {$f$};
    \filldraw[dotsF] (0,1) rectangle (2,2);
    \fill[dotsG] (0,1) rectangle (2,-1);
  \end{tikzpicture}
  \\\vspace{.5cm}
  \begin{tikzpicture}
    \fill[bluefill] (0,0) rectangle (1,2);
    \path[dotsG] (0,0) rectangle (1,1);
    \path[dotsF] (0,1) to[out=90,in=90,looseness=2.5] (1,1) -- (0,1);
    \draw (0,1) -- node[ed,near end,below=3pt] {$\alpha$} (1,1);
  \end{tikzpicture}
  \quad\raisebox{.9cm}{$=$}\quad
  \begin{tikzpicture}
    \fill[bluefill] (0,0) rectangle (1,2);
    \path[dotsG] (0,0) -- (0,.7) to[rrel,out=90,in=90,looseness=2.5] (1,0) -- (1,0) -- cycle;
  \end{tikzpicture}
  \hspace{1.5cm}
  \begin{tikzpicture}[yscale=.7]
    \fill[bluefill] (.5,-1) rectangle (1,2);
    \fill[greenfill] (1,-1) rectangle (3,2);
    \fill[redfill] (3,-1) rectangle (3.5,2);
    \draw (1,-1) -- node[ed,near end] {$M$} (1,2);
    \draw (3,-1) -- node[ed,near end,swap] {$N$} (3,2);
    \path[dotsF] (.5,0) -- (3.5,0) -- (3.5,2) -- (2.5,2)
    to[out=-90,in=-90,looseness=5] (1.5,2) -- (0.5,2) -- cycle;
    \path[dotsG] (.5,-1) rectangle (3.5,0);
    \draw (.5,0) -- node[ed,above=3pt,near end] {$\alpha$} (3.5,0);
  \end{tikzpicture}
  \quad\raisebox{.9cm}{$=$}\quad
  \begin{tikzpicture}[yscale=.7]
    \fill[bluefill] (.5,-1) rectangle (1,2);
    \fill[greenfill] (1,-1) rectangle (3,2);
    \fill[redfill] (3,-1) rectangle (3.5,2);
    \draw (1,-1) -- node[ed,near start] {$M$} (1,2);
    \draw (3,-1) -- node[ed,near start,swap] {$N$} (3,2);
    \path[dotsG] (.5,-1) -- (3.5,-1) -- (3.5,1) -- (2.5,1)
    to[out=-90,in=-90,looseness=4] (1.5,1) -- (0.5,1) -- cycle;
    \path[dotsF] (.5,1) rectangle (1.5,2);
    \path[dotsF] (2.5,1) rectangle (3.5,2);
    \draw (.5,1) -- node[ed,above=3pt,near end] {$\alpha$} (1.5,1)
    -- node[ed,above=3pt,very near end] {$\alpha$} (3.5,1);
  \end{tikzpicture}
  \caption{The axioms of a conjunctional transformation}
  \label{fig:ctax}
\end{figure}

When interpreting the final equation in \autoref{fig:ctax}, note that according to the conventions for functors established in \S\ref{sec:funct-bicat},
\raisebox{-3mm}{\begin{tikzpicture}[scale=.5]
  \fill[bluefill] (0,-1) rectangle ++(1,2);
  \fill[greenfill] (1,-1) rectangle ++(2,2);
  \fill[redfill] (3,-1) rectangle ++(1,2);
  \path[dotsF] (0,0) rectangle ++(4,1);
  \path[dotsG] (0,-1) rectangle ++(4,1);
  \draw (0,0) -- node[ed,above=2pt] {$\alpha$} ++(4,0);
  \draw (1,1) -- node[ed,near end,swap] {$M$} ++(0,-2);
  \draw (3,1) -- node[ed,near end] {$N$} ++(0,-2);
\end{tikzpicture}}
denotes $\al_{M\odot N}$
, while
\raisebox{-3mm}{\begin{tikzpicture}[scale=.5]
  \fill[bluefill] (0,-1) rectangle ++(1,2);
  \fill[greenfill] (1,-1) rectangle ++(3,2);
  \fill[redfill] (4,-1) rectangle ++(1,2);
  \path[dotsF] (0,0) rectangle ++(2,1);
  \path[dotsG] (0,-1) rectangle ++(2,1);
  \path[dotsF] (3,0) rectangle ++(2,1);
  \path[dotsG] (3,-1) rectangle ++(2,1);
  \draw (0,0) -- node[ed,above=2pt] {$\alpha$} ++(5,0);
  \draw (1,1) -- node[ed,near end,swap] {$M$} ++(0,-2);
  \draw (4,1) -- node[ed,near end] {$N$} ++(0,-2);
\end{tikzpicture}}
denotes $\left(\al_M \odot \id_{G(N)}\right)\circ\left(\id_{F(M)}\odot \al_N\right)$.

\begin{rmk}
  The axioms of a conjunctional transformation say precisely that the 1- and 2-cell components $\al_R$ and $\al_M$ form an ``oplax natural transformation,'' or equivalently that the dual components $\ral R$ and $\ral M$ form a ``lax natural transformation.''
  We call this a \emph{conjunctional transformation} because it is a ``conjoint pair'' in the double category of lax and oplax natural transformations; see for instance~\cite{csmb}.
\end{rmk}

Before we can state and prove an analogue of \autoref{thm:trans-pres-tr}, we need one further observation.
Namely, if $Q\colon R\hto R$ is an endo-1-cell, then since $\al_R$ is right dualizable, we can take the trace of the 2-cell $\al_Q\colon F(Q) \odot \al_R \to \al_R \odot G(Q)$ to obtain a morphism $\tr(\al_Q)\colon \sh{F(Q)} \to \sh{G(Q)}$.
We naturally depict this as in \autoref{fig:trconj}.

\begin{figure}[hbt]
  \centering
  \begin{tikzpicture}[scale=.7]
    \bgcylinder{0,0}{3}{1.2}{.3}{blue}{blue}
    \draw (top) -- node[ed,near start] {$Q$} (bot);
    \coordinate (mid) at ($(dl)!.4!(ul)$);
    \elltheta{mid}{0}{theta}
    \begin{scope}
      \draw[clip] ($(dl)!.4!(ul)$) to[out=-90,in=-90,looseness=.5]
      node [ed,near end,above=2pt] {$\alpha$} ($(dr)!.4!(ur)$)
      -- ++(.1,0) -- (ur') -- (ul') -- cycle;
      \path[dotsF] (top) -- ++(-.5,0) -- ++(0,-3) -- ++(1,0) -- ++(0,3) -- cycle;
    \end{scope}
    \begin{scope}
      \clip ($(dl)!.4!(ul)$) to[out=-90,in=-90,looseness=.5] ($(dr)!.4!(ur)$)
      -- ++(.1,0) -- (dr') |- (bot') -| (dl') -- cycle;
      \path[dotsG] ($(bot)+(-.5,-1)$) -- ++(0,3) -- ++(1,0) -- ++(0,-3) -- cycle;
    \end{scope}
  \end{tikzpicture}
  \caption{The trace of a conjunctional transformation}
  \label{fig:trconj}
\end{figure}
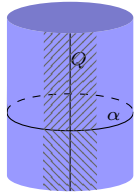

\begin{thm}\label{thm:trans-pres-tr-bicat}
  Let \sB\ and \sC\ be bicategories with shadows, $F, G\maps \sB\to\sC$ lax functors, and $\alpha\maps F\to G$ a conjunctional transformation, let $M\maps R\hto S$ be right dualizable in \sB, and suppose that $F$ and $G$ preserve its dual $\rdual{M}$.
  \begin{enumerate}
  \item Then $\al_M\colon F(M) \odot \al_S \to \al_R \odot G(M)$ is an isomorphism, whose inverse is the mate of $\al_{\rdual{M}}$ under the dual pairs $(F(M),F(\rdual{M}))$ and $(G(M),G(\rdual{M}))$.\label{item:tptb1}
  \end{enumerate}
  Assume furthermore that $\fc\colon F(M)\odot F(P)\to F(M\odot P)$ and $\fc\colon G(M)\odot G(P)\to G(M\odot P)$ are isomorphisms.
  \begin{enumerate}[resume]
  \item Then for any 2-cell $f\maps Q\odot M\to M\odot P$, the following square commutes.\label{item:tptb2}
    \[\xymatrix@C=5pc{\sh{F(Q)} \ar[r]^{\tr(\fc^{-1}\circ Ff\circ\fc)}\ar[d]_{\tr(\al_Q)} &
      \sh{F(P)} \ar[d]^{\tr(\al_P)}\\
      \sh{G(Q)}\ar[r]_{\tr(\fc^{-1}\circ Gf\circ\fc)} & \sh{G(P)}}\]
  \end{enumerate}
\end{thm}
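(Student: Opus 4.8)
The plan is to obtain part~\ref{item:tptb1} as a bicategorical version of ``duals invert'' (cf.~\autoref{fig:duals-invert}), and then to deduce part~\ref{item:tptb2} from it by rewriting each of the two legs of the square as the trace of a single 2-cell, using \autoref{thm:tr-vfunc}.

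For part~\ref{item:tptb1}: because $F$ and $G$ preserve $\rdual{M}$ we have dual pairs $(F(M),F(\rdual{M}))$ and $(G(M),G(\rdual{M}))$ in \sC, alongside the pairs $(\al_R,\ral{R})$ and $(\al_S,\ral{S})$ carried by the conjunctional transformation.  The mate of $\al_{\rdual{M}}\maps F(\rdual{M})\odot\al_R\to\al_S\odot G(\rdual{M})$, formed using the coevaluation of $(F(M),F(\rdual{M}))$ and the evaluation of $(G(M),G(\rdual{M}))$, is a 2-cell $\al_R\odot G(M)\to F(M)\odot\al_S$; I would check that it is a two-sided inverse of $\al_M$ by expanding the two composites and reducing them with the triangle identities of these dual pairs together with the naturality and composition axioms of \autoref{fig:ctax}.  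This is the manipulation that underlies \autoref{fig:duals-invert}, the only new point being that one must track which of the four dual pairs is used at each cap and cup, since the symmetry that hides this in the monoidal case is no longer available.

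For part~\ref{item:tptb2}: applying \autoref{thm:tr-vfunc} to the right dualizable 1-cells $F(M)$ and $\al_S$ shows that the top-then-right leg of the square is the trace, taken over the dualizable 1-cell $F(M)\odot\al_S$, of
\[ h_1 \;=\; (\id_{F(M)}\odot\al_P)\circ\bigl((\fc^{-1}\circ F(f)\circ\fc)\odot\id_{\al_S}\bigr)\maps F(Q)\odot F(M)\odot\al_S \to F(M)\odot\al_S\odot G(P), \]
while applying it to $\al_R$ and $G(M)$ shows the left-then-bottom leg is the trace, taken over $\al_R\odot G(M)$, of
\[ h_2 \;=\; \bigl(\id_{\al_R}\odot(\fc^{-1}\circ G(f)\circ\fc)\bigr)\circ(\al_Q\odot\id_{G(M)})\maps F(Q)\odot\al_R\odot G(M) \to \al_R\odot G(M)\odot G(P). \]
By part~\ref{item:tptb1}, $\al_M\maps F(M)\odot\al_S\to\al_R\odot G(M)$ is an isomorphism, and a short chase identifies $h_2$ with $(\al_M\odot\id_{G(P)})\circ h_1\circ(\id_{F(Q)}\odot\al_M^{-1})$: one applies the composition axiom of \autoref{fig:ctax} to the 1-cells $Q\odot M$ and $M\odot P$, the naturality axiom to the 2-cell $f$, and cancels the constraints \fc\ (invertible by hypothesis).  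Finally, a direct check from the definition of trace -- using that the trace is independent of the choice of $\rdual{M},\eta,\ep$ -- shows that conjugating the dualizable object of a trace by an isomorphism, with the evaluation and coevaluation transported accordingly, leaves the trace unchanged; hence $\tr(h_1)=\tr(h_2)$, which is the asserted commutativity.

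The main obstacle will be the mate bookkeeping in part~\ref{item:tptb1}; part~\ref{item:tptb2} is then essentially formal, the satisfying point being that, just as anticipated in \autoref{rmk:why-conjunctional}, the composition and naturality axioms of a conjunctional transformation are precisely what make the chase close up.  I would also include a figure exhibiting the cylindrical string-diagram deformation that realizes part~\ref{item:tptb2} directly (generalizing \autoref{fig:transftr}), but the argument above shows that this deformation is nothing other than \autoref{thm:tr-vfunc} combined with those axioms.
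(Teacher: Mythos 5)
Your proposal is correct and follows essentially the same route as the paper: part~\ref{item:tptb1} by checking that the mate of $\al_{\rdual{M}}$ is a two-sided inverse via the conjunctional axioms and triangle identities, and part~\ref{item:tptb2} by using \autoref{thm:tr-vfunc} to collapse each leg to a single trace, sliding $\al$ across $f$ with the axioms of \autoref{fig:ctax}, and then cancelling $\al_M$ against its inverse. The only cosmetic difference is that you justify the final cancellation by conjugation-invariance of the trace (via independence of the choice of duality data), where the paper instead invokes the sliding property (\autoref{thm:tr-cyclnat}) to carry $\al_M$ around the cylinder; the two are interchangeable.
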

\begin{proof}
  For~\ref{item:tptb1}, \autoref{fig:dualsinvert-bicat} displays the equality asserting that one composite of $\al_M$ with its putative inverse is the identity.
  To prove it we simply ``pull the lower loop through the $\alpha$ membrane,'' using the axioms from \autoref{fig:ctax} (and the consequent relations for the inverses of \fc\ and \fii, when these transformations are invertible) and then straighten the $F(M)$ string using a triangle identity.
  The other equality is similar.
  (This part does not require $\alpha$ to be conjunctional, only oplax.
  A corresponding result is true when $\alpha$ is lax and $M$ is left dualizable.)

  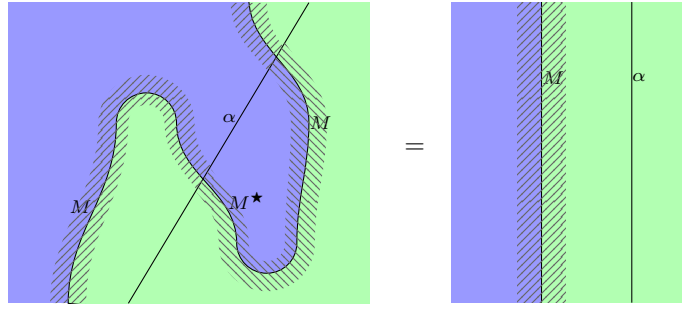
\begin{figure}[tbp]
    \centering
    \begin{tikzpicture}[scale=.8]
      \clip (-2,0) rectangle (4,5);
      \fill[bluefill] (-2,0) rectangle (4,5);
      \draw[greenfill] (-1,0) to[out=90,in=-90] node[ed] {$M$} (-.2,3) arc (180:0:.5)
      to[out=-90,in=90] node[ed,near end] {$\rdual{M}$} (1.8,1) arc (-180:0:.5)
      to[out=90,in=-90] (3,3) node[ed,right] {$M$} to[out=90,in=-90] (2,5.1)
      -- ++(0,.1) -- (5.1,5.1) -- (5.1,-.1) -- (0,-.1) -- cycle;
      \draw (0,0) -- node[ed,pos=.6] {$\alpha$} ++(3,5);
      \begin{scope}
        \clip (-2,0) -- (0,0) -- (3,5) -- (-2,5) -- cycle;
        \path[dotsF] (-1.3,0) to[out=90,in=-90] (-.5,3) arc (180:0:.8)
        to[out=-90,in=90] (2.1,1) -- (1.5,1) to[out=90,in=-90] (.6,3) arc (0:180:.3)
        to[out=-90,in=90] (-.7,0) -- cycle;
        \path[dotsF] (1.7,5) to[out=-90,in=90] (2.7,3) -- (3.3,3) to[out=90,in=-90] (2.3,5) -- cycle;
      \end{scope}
      \begin{scope}
        \clip (0,0) -- (3,5) -- (4,5) -- (4,0) -- cycle;
        \path[dotsG] (1.7,5) to[out=-90,in=90] (2.7,3)
        to[out=-90,in=90] (2.6,1) arc (0:-180:.3)
        to[out=90,in=-90] (1.1,3) -- (.5,3) to[out=-90,in=90] (1.5,1)
        arc (-180:0:.8) to[out=90,in=-90]
        (3.3,3) to[out=90,in=-90] (2.4,5) -- cycle;
      \end{scope}
    \end{tikzpicture}
    \quad\raisebox{2cm}{$=$}\quad
    \begin{tikzpicture}[scale=.8]
      \fill[bluefill] (0,0) rectangle (1.5,5);
      \fill[greenfill] (1.5,0) rectangle (4,5);
      \draw (3,0) -- node[ed,near end,swap] {$\alpha$} (3,5);
      \draw (1.5,0) -- node[ed,near end,swap] {$M$} (1.5,5);
      \path[dotsF] (1.1,0) -- ++(0,5) -- ++(.8,0) -- ++(0,-5) -- cycle;
    \end{tikzpicture}
    \caption{Duals invert a bicategorical transformation}
    \label{fig:dualsinvert-bicat}
  \end{figure}

  The proof of~\ref{item:tptb2} is the first (and only) time in the paper that we do not need to invoke the actual definition of the bicategorical trace: all we need are the properties proven in \S\ref{sec:prop-bicat} and the axioms of a conjunctional transformation.
  Thus, we can now simplify the pictures by drawing the bicategorical trace with a plain loop around the back of the cylinder, rather than breaking it down into coevaluation, shadow, and evaluation.
  With this convention, the proof is shown in \autoref{fig:2func-bicat}.
  In the first step, we apply \autoref{thm:tr-vfunc} to replace $\tr(\alpha_P)\circ\tr(\fc^{-1}\circ Ff\circ\fc)$ by the trace of a composite.
  Secondly, we use part~\ref{item:tptb1} to introduce an instance of $\alpha_M$ and its inverse.
  In the third step, we use the axioms of \autoref{fig:ctax} to slide the $\alpha$ string across $\fc^{-1}\circ Ff\circ\fc$.
  We then use \autoref{thm:tr-cyclnat} to bring $\alpha_M$ around the back of the cylinder, so that we can cancel it with its inverse.
  Finally, we apply \autoref{thm:tr-vfunc} again to obtain $\tr(\fc^{-1}\circ Gf\circ\fc)\circ\tr(\alpha_Q)$, as desired.
\end{proof}

  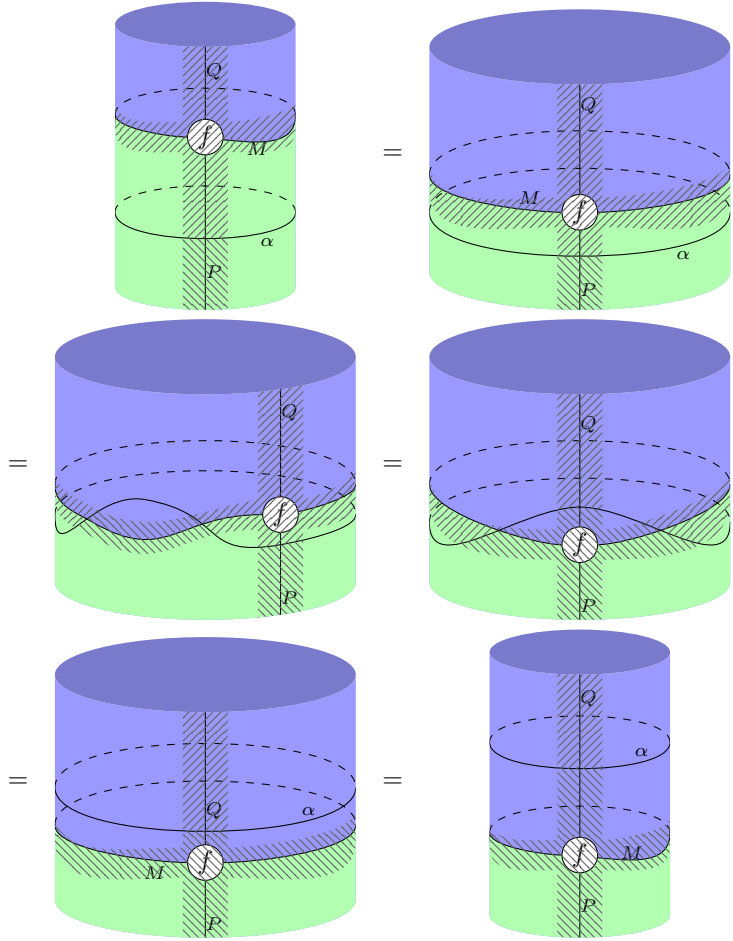
\begin{figure}[tbp]
    \centering
    \begin{tabular}{cccc}
      &
    \begin{tikzpicture}
      \bgcylinder{-1.2,0}{3.5}{1.2}{.3}{blue}{blue}
      \begin{pgfonlayer}{foreground}
        \node[vert] (f) at (0,2) {$f$};
        \elltheta{f}{.3}{theta}
        \elltheta{thetaL}{-1.3}{theta2}
      \end{pgfonlayer}
      \draw[greenfill] (thetaL) to[out=-90,in=180,looseness=.5] (f) to[out=0,in=-90] node[ed,swap] {$M$} (thetaR)
      -- ++(.1,0) -- (dr') |- (bot') -| (dl') -- (thetaL);
      \draw (f |- top) -- node[ed] {$Q$} (f) -- node[ed,near end] {$P$} (f |- bot);
      \draw (theta2L) to[out=-90,in=-90,looseness=.5] node[ed,near end,below=2pt] {$\alpha$} (theta2R);
      \begin{pgfonlayer}{foreground}
      \begin{scope}
        \clip (theta2L) to[out=-90,in=-90,looseness=.5] (theta2R) -- (ur) -- (ul) -- cycle;
        \path[dotsF] (top) ++(.3,0) -- ($(f)+(.3,.2)$) to[out=0,in=-90,looseness=.5] ($(thetaR)+(0,.2)$)
        -- ++(0,-.4) to[out=-90,in=0] ($(f)+(.3,-.2)$) -- ($(bot)+(.3,0)$) -- ++(-.6,0)
        -- ($(f)+(-.3,-.2)$) to[out=180,in=-90] ($(thetaL)+(0,-.2)$) -- ++(0,.4)
        to[out=-90,in=180] ($(f)+(-.3,.2)$) -- ($(top)+(-.3,0)$) -- cycle;
      \end{scope}
      \begin{scope}
        \clip (theta2L) to[out=-90,in=-90,looseness=.5] (theta2R) -- (dr) |- (bot) -| (dl) -- cycle;
        \path[dotsG] (bot) -- ++(.3,0) -- ++(0,2) -- ++(-.6,0) -- ++(0,-2) -- cycle;
      \end{scope}
      \end{pgfonlayer}
    \end{tikzpicture}
    & \raisebox{2cm}{$=$} &
    \begin{tikzpicture}
      \bgcylinder{-2,.2}{3}{2}{.5}{blue}{blue}
      \begin{pgfonlayer}{foreground}
        \node[vert] (f) at (0,1) {$f$};
        \elltheta{f}{.5}{theta}
        \elltheta{f}{0}{theta2}
      \end{pgfonlayer}
      \draw[greenfill] (f) to[out=180,in=-90,looseness=.5] node[ed,near start,above=2pt] {$M$} (thetaL)
      -- ++(-.1,0) -- (dl') |- (bot') -| (dr') -- ($(thetaR)+(.1,0)$)
      -- (thetaR) to[out=-90,in=0,looseness=.5] (f);
      \draw (theta2L) to[out=-90,in=-90,looseness=.5] node[ed,near end,below=2pt] {$\alpha$} (theta2R);
      \draw (f |- top) -- node[ed,pos=.4] {$Q$} (f) -- node[ed,near end] {$P$} (f |- bot);
      \begin{pgfonlayer}{foreground}
      \begin{scope}
        \clip (theta2L) to[out=-90,in=-90,looseness=.5] (theta2R) -- (ur) -- (ul) -- cycle;
        \path[dotsF] (top) ++(.3,0) -- ($(f)+(.3,.2)$) to[out=0,in=-90,looseness=.5] ($(thetaR)+(0,.2)$)
        -- ++(0,-.4) to[out=-90,in=0] ($(f)+(.3,-.2)$) -- ($(bot)+(.3,0)$) -- ++(-.6,0)
        -- ($(f)+(-.3,-.2)$) to[out=180,in=-90] ($(thetaL)+(0,-.2)$) -- ++(0,.4)
        to[out=-90,in=180] ($(f)+(-.3,.2)$) -- ($(top)+(-.3,0)$) -- cycle;
      \end{scope}
      \begin{scope}
        \clip (theta2L) to[out=-90,in=-90,looseness=.5] (theta2R) -- (dr) |- (bot) -| (dl) -- cycle;
        \path[dotsG] (bot) -- ++(.3,0) -- ++(0,2) -- ++(-.6,0) -- ++(0,-2) -- cycle;
      \end{scope}
      \end{pgfonlayer}
    \end{tikzpicture}
    \\
    \raisebox{2cm}{$=$} &
    \begin{tikzpicture}
      \bgcylinder{-2,.2}{3}{2}{.5}{blue}{blue}
      \begin{pgfonlayer}{foreground}
        \node[vert] (f) at (1,1.1) {$f$};
        \elltheta{f}{.4}{theta}
        \elltheta{f}{0}{theta2}
      \end{pgfonlayer}
      \draw[greenfill] (f)
      to[out=10,in=-90,looseness=.5] (thetaR) 
      -- ++(.1,0) -- (dr') |- (bot') -| (dl') -- ($(thetaL)+(-.1,0)$)
      -- (thetaL)
      to[rrel,out=-90,in=150,looseness=1] (.5,-.5) node[coordinate] (Xabove) {}
      to[out=-30,in=180,looseness=1.3] (f);
      \draw (f |- top) -- node[ed,pos=.4] {$Q$} (f) -- node[ed,near end] {$P$} (f |- bot);
      \def\alphaline{($(theta2L)+(-.1,0)$) -- (theta2L)
        to[rrel,out=-90,in=170,looseness=1.5] (1.2,.2) node[coordinate] (Xbelow) {}
        to[out=-10,in=190,looseness=1.5] ($(f)+(0,-.4)$)
        to[out=10,in=-90,looseness=.5] (theta2R)
        -- ++(.1,0)}
      \def\loweredpath{($(f)+(-.3,-.2)$)
        to[out=180,in=-30,looseness=1.3] ($(Xabove)+(0,-.2)$)
        to[out=150,in=-90,looseness=1] ($(thetaL)+(0,-.2)$) -- ++(0,.4)
        to[out=-90,in=150,looseness=1] ($(Xabove)+(0,.2)$)
        to[out=-30,in=190] ($(f)+(-.3,.2)$)}
      \begin{pgfonlayer}{foreground}
      \begin{scope}
        \draw[clip] \alphaline -- (ur') -- (top') -- (ul') -- cycle;
        \path[dotsF] (f |- top) ++(.3,0) -- ($(f)+(.3,.2)$) to[out=10,in=-90,looseness=.5] ($(thetaR)+(0,.2)$)
        -- ++(0,-.4) to[out=-90,in=10] ($(f)+(.3,-.2)$) -- ($(f |- bot)+(.3,0)$) -- ++(-.6,0)
        -- \loweredpath -- ($(f |- top)+(-.3,0)$) -- cycle;
      \end{scope}
      \begin{scope}
        \clip \alphaline -- (dr) |- (bot) -| (dl) -- cycle;
        \path[dotsG] (f |- bot) -- ++(.3,0) -- ++(0,2) -- ++(-.6,0) -- ++(0,-2) -- cycle;
        \path[dotsG] \loweredpath -- cycle;
      \end{scope}
      \end{pgfonlayer}
    \end{tikzpicture}
    &\raisebox{2cm}{$=$}&
    \begin{tikzpicture}
      \bgcylinder{-2,.2}{3}{2}{.5}{blue}{blue}
      \begin{pgfonlayer}{foreground}
        \node[vert] (f) at (0,.7) {$f$};
        \elltheta{f}{.8}{theta}
        \elltheta{f}{.3}{theta2}
      \end{pgfonlayer}
      \draw[greenfill] (f) to[out=180,in=-90,looseness=.5] (thetaL)
      -- ++(-.1,0) -- (dl') |- (bot') -| (dr') -- ($(thetaR)+(.1,0)$)
      -- (thetaR) to[out=-90,in=0,looseness=.5] (f);
      \draw (f |- top) -- node[ed,pos=.4] {$Q$} (f) -- node[ed,near end] {$P$} (f |- bot);
      \def\alphaline{($(thetaL)+(-.1,0)$) --
        (theta2L) to[out=-90,in=180,looseness=1] ($(f)+(0,.5)$)
        to[out=0,in=-90,looseness=1] (theta2R) -- ++(.1,0)}
      \def\shading{
        (top) ++(.3,0) -- ($(f)+(.3,.2)$) to[out=0,in=-90,looseness=.5] ($(thetaR)+(0,.2)$)
        -- ++(0,-.4) to[out=-90,in=0] ($(f)+(.3,-.2)$) -- ($(bot)+(.3,0)$) -- ++(-.6,0)
        -- ($(f)+(-.3,-.2)$) to[out=180,in=-90] ($(thetaL)+(0,-.2)$) -- ++(0,.4)
        to[out=-90,in=180] ($(f)+(-.3,.2)$) -- ($(top)+(-.3,0)$) -- cycle}
      \begin{pgfonlayer}{foreground}
      \begin{scope}
        \draw[clip] \alphaline -- (ur') -- (ul') -- cycle;
        \path[dotsF] \shading ;
      \end{scope}
      \begin{scope}
        \clip \alphaline -- (dr) |- (bot) -| (dl) -- cycle;
        \path[dotsG] \shading ;
      \end{scope}
      \end{pgfonlayer}
    \end{tikzpicture}
    \\
    \raisebox{2cm}{$=$}&
    \begin{tikzpicture}
      \bgcylinder{-2,.2}{3}{2}{.5}{blue}{blue}
      \begin{pgfonlayer}{foreground}
        \node[vert] (f) at (0,.7) {$f$};
        \elltheta{f}{.5}{theta}
        \elltheta{f}{1}{theta2}
      \end{pgfonlayer}
      \draw[greenfill] (f) to[out=180,in=-90,looseness=.5] node[ed,near start,below=2pt] {$M$} (thetaL)
      -- ++(-.1,0) -- (dl') |- (bot') -| (dr') -- ($(thetaR)+(.1,0)$)
      -- (thetaR) to[out=-90,in=0,looseness=.5] (f);
      \draw (theta2L) to[out=-90,in=-90,looseness=.5] node[ed,near end,above=2pt] {$\alpha$} (theta2R);
      \draw (f |- top) -- node[ed,pos=.8] {$Q$} (f) -- node[ed,near end] {$P$} (f |- bot);
      \begin{pgfonlayer}{foreground}
      \begin{scope}
        \clip (theta2L) to[out=-90,in=-90,looseness=.5] (theta2R) -- (ur) -- (ul) -- cycle;
        \path[dotsF] (top) -- ++(.3,0) -- ++(0,-4) -- ++(-.6,0) -- ++(0,4) -- cycle;
      \end{scope}
      \begin{scope}
        \clip (theta2L) to[out=-90,in=-90,looseness=.5] (theta2R) -- (dr) |- (bot) -| (dl) -- cycle;
        \path[dotsG] (top) ++(.3,0) -- ($(f)+(.3,.2)$) to[out=0,in=-90,looseness=.5] ($(thetaR)+(0,.2)$)
        -- ++(0,-.4) to[out=-90,in=0] ($(f)+(.3,-.2)$) -- ($(bot)+(.3,0)$) -- ++(-.6,0)
        -- ($(f)+(-.3,-.2)$) to[out=180,in=-90] ($(thetaL)+(0,-.2)$) -- ++(0,.4)
        to[out=-90,in=180] ($(f)+(-.3,.2)$) -- ($(top)+(-.3,0)$) -- cycle;
      \end{scope}
      \end{pgfonlayer}
    \end{tikzpicture}
    &\raisebox{2cm}{$=$}&
    \begin{tikzpicture}
      \bgcylinder{-1.2,0}{3.5}{1.2}{.3}{blue}{blue}
      \begin{pgfonlayer}{foreground}
        \node[vert] (f) at (0,.8) {$f$};
        \elltheta{f}{.3}{theta}
        \elltheta{thetaL}{1.2}{theta2}
      \end{pgfonlayer}
      \draw[greenfill] (thetaL) to[out=-90,in=180,looseness=.5] (f) to[out=0,in=-90] node[ed] {$M$} (thetaR)
      -- ++(.1,0) -- (dr') |- (bot') -| (dl') -- (thetaL);
      \draw (f |- top) -- node[ed,near start] {$Q$} (f) -- node[ed] {$P$} (f |- bot);
      \draw (theta2L) to[out=-90,in=-90,looseness=.5] node[ed,near end,above=2pt] {$\alpha$} (theta2R);
      \begin{pgfonlayer}{foreground}
      \begin{scope}
        \clip (theta2L) to[out=-90,in=-90,looseness=.5] (theta2R) -- (ur) -- (ul) -- cycle;
        \path[dotsF] (top) -- ++(.3,0) -- ++(0,-4) -- ++(-.6,0) -- ++(0,4) -- cycle;
      \end{scope}
      \begin{scope}
        \clip (theta2L) to[out=-90,in=-90,looseness=.5] (theta2R) -- (dr) |- (bot) -| (dl) -- cycle;
        \path[dotsG] (top) ++(.3,0) -- ($(f)+(.3,.2)$) to[out=0,in=-90,looseness=.5] ($(thetaR)+(0,.2)$)
        -- ++(0,-.4) to[out=-90,in=0] ($(f)+(.3,-.2)$) -- ($(bot)+(.3,0)$) -- ++(-.6,0)
        -- ($(f)+(-.3,-.2)$) to[out=180,in=-90] ($(thetaL)+(0,-.2)$) -- ++(0,.4)
        to[out=-90,in=180] ($(f)+(-.3,.2)$) -- ($(top)+(-.3,0)$) -- cycle;
      \end{scope}
      \end{pgfonlayer}
    \end{tikzpicture}
    \end{tabular}
    \caption{2-functoriality of the bicategorical trace}
    \label{fig:2func-bicat}
  \end{figure}

When $F$ and $G$ are additionally shadow functors, we can also compare $\Ft(\tr(f))$ and $\Gt(\tr(f))$, but we need some extra structure on \al.

\begin{defn}
  Let $F,G\colon\sB\to\sC$ be lax shadow functors.
  A \textbf{shadow conjunctional transformation} $\al\colon F\to G$ is a conjunctional transformation together with a natural transformation
  \[ \alt \colon \Ft \to \Gt\]
  such that a coherence axiom is satisfied.
\end{defn}

We draw the component
\[(\alt)_{\scriptsh{M}}\colon \Ft\big(\sh{M}\big) \to \Gt\big(\sh{M}\big)\]
as in \autoref{fig:shtrans}.
Of course, despite what the picture may suggest, $\alt$ is not actually the trace of anything.

\begin{figure}[htb]
  \centering
  \subfigure[Data]{
  \begin{tikzpicture}
    \bgcylinder{0,0}{3}{1.3}{.3}{blue}{blue}
    \elltheta{1.3,1}{.3}{theta}
    \draw (top) -- node[ed,near start] {$M$} (bot);
    \draw[dotsF] (thetaL) to[out=-90,in=-90,looseness=.5] node[ed,near end,above=2pt] {$\alpha$} (thetaR)
    -- ++(.1,0) -- (ur') -- (top') -- (ul') -- ($(thetaL)+(-.1,0)$) -- cycle;
    \path[dotsG] (thetaL) to[out=-90,in=-90,looseness=.5] (thetaR)
    -- ++(.1,0) -- (dr') |- (bot') -| (dl') -- ($(thetaL)+(-.1,0)$) -- cycle;
  \end{tikzpicture}}
  \hspace{2cm}
  \subfigure[Axiom]{\label{fig:shtransax}
  \begin{tikzpicture}[scale=.8]
    \bgcylinder{0,0}{4}{1.3}{.3}{blue}{blue}
    \elltheta{1.3,1}{.3}{theta}
    \draw[dotsG] (thetaL) to[out=-90,in=-90,looseness=.5] node[ed,near end,above=2pt] {$\alpha$} (thetaR)
    -- ++(.1,0) -- (dr') |- (bot') -| (dl') -- ($(thetaL)+(-.1,0)$) -- cycle;
    \draw (top) -- node[ed,near start] {$M$} (bot);
    \begin{scope}
      \clip (thetaL) to[out=-90,in=-90,looseness=.5] (thetaR) -- (ur) -- (ul) -- cycle;
      \path[dotsF] (top) ++ (-.5,0) to[out=-90,in=90,looseness=1.5] (0,2) -- (dl) -- (dr) -- ++(0,2)
      to[out=90,in=-90,looseness=1.5] ($(top)+(.5,0)$) -- cycle;
    \end{scope}
  \end{tikzpicture}
  \quad\raisebox{1.8cm}{$=$}\quad
  \begin{tikzpicture}[scale=.8]
    \bgcylinder{0,0}{4}{1.3}{.3}{blue}{blue}
    \elltheta{1.3,2.5}{.3}{theta}
    \draw (top) -- node[ed,near end] {$M$} (bot);
    \begin{scope}
      \draw[clip] (thetaL) to[out=-90,in=-90,looseness=.5] node[ed,near end,above=2pt] {$\alpha$} (thetaR)
      -- ++(.1,0) -- (ur') -- (ul') -- ($(thetaL)+(-.1,0)$) -- cycle;
      \path[dotsF] (top) -- ++(-.5,0) -- ++(0,-3) -- ++(1,0) -- ++(0,3) -- cycle;
    \end{scope}
    \begin{scope}
      \path[clip] (thetaL) to[out=-90,in=-90,looseness=.5] node[ed,near end,above=2pt] {$\alpha$} (thetaR)
      -- (dr') |- (bot') -| (dl') -- cycle;
      \path[dotsG] (top) -- ++(-.5,0) -- ++(0,-2) coordinate (x)
      to[out=-90,in=90,looseness=1.5] (0,.5) -- (dl) |- (bot) -| (dr) -- ++(0,.5)
      to[out=90,in=-90] ($(x)+(1,0)$) -- ++(0,2) -- cycle;
    \end{scope}
  \end{tikzpicture}}
  \caption{A shadow conjunctional transformation}
  \label{fig:shtrans}
\end{figure}
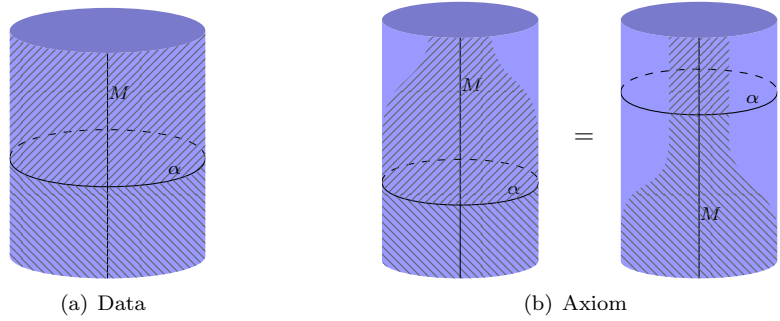

We can now observe that if $\alt$ is a shadow conjunctional transformation, $M$ is right dualizable, $f\colon Q\odot M \to M \odot P$ is a 2-cell, and \autoref{thm:bicat-funct-pres-tr} applies to $F$ and to $G$, then all six faces of the cube shown in \autoref{fig:cube} commute.
The back face is \autoref{thm:trans-pres-tr-bicat}, the front is just naturality of \alt, the top and bottom faces are \autoref{thm:bicat-funct-pres-tr} for $F$ and $G$ respectively, and the left and right faces are the axiom of \alt\ from Figure~\ref{fig:shtransax}.

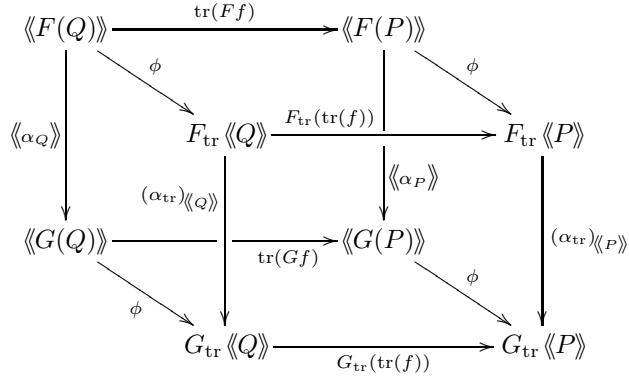
\begin{figure}
  \centering
  \begin{equation*}
  \xymatrix{
  \sh{F(Q)} \ar[rr]^{\tr(Ff)}\ar[dd]_{\sh{\al_Q}} \ar[dr]^\phi &&
  \sh{F(P)} \ar'[d][dd]^(.4){\sh{\al_P}} \ar[dr]^\phi\\
  &\Ft\;\sh{Q} \ar[rr]^(.33){\Ft(\tr(f))} \ar[dd]_(.3){(\alt)_{\scriptsh{Q}}} &&
  \Ft\;\sh{P} \ar[dd]^{(\alt)_{\scriptsh{P}}}\\
  \sh{G(Q)}\ar'[r][rr]_(.4){\tr(Gf)} \ar[dr]_\phi && \sh{G(P)} \ar[dr]^\phi\\
  &\Gt\;\sh{Q}\ar[rr]_{\Gt(\tr(f))} && \Gt\;\sh{P}}\label{eq:trace-cube}
\end{equation*}
\caption{The cube of traces}
\label{fig:cube}
\end{figure}

\begin{eg}
  As observed in \autoref{rmk:why-conjunctional}, we have two lax shadow functors $C_*(-;\bbZ)$ and $C_*(-;\bbQ)$ from $\Ho(\bgpsp)$ to $\Ho(\calCh)$, and the inclusion $\bbZ\rightarrow \bbQ$ then defines a conjunctional transformation from the first to the second.
  For any $P\colon R\hto R$ in $\Ho(\bgpsp)$, the shadow $\tr(\al_P)$ is just the induced map $\sh{C_*(P;\bbZ)} \to \sh{C_*(P;\bbQ)}$.
  Therefore, \autoref{thm:trans-pres-tr-bicat} implies that this map identifies the Reidemeister trace, computed as in \autoref{thm:reidemeister-2} using the rational chain complex, with the analogous version computed using the integral chain complex.

  This transformation \al\ is also a shadow conjunctional transformation; the map $\alt\colon C_*(-;\bbZ)_{\tr} \to C_*(-;\bbQ)_{\tr}$ simply includes the ordinary integral chain complex of a spectrum into its rational one.
  Thus we have four different ways to compute the Reidemeister trace, and the cube in \autoref{fig:cube} shows that they are all equivalent.
\end{eg}

\appendix
\section{String Diagrams}
\label{sec:string-diagrams}

Our goal in this appendix is to prove the validity of reasoning with cylinder diagrams for bicategories with shadows, such as we have been using throughout the paper.
The proof is a fairly straightforward adaptation of that given by Joyal and Street~\cite{js:geom-tenscalc-i} for monoidal categories.
(The version for plain bicategories, as in \S\ref{sec:bicategories}, seems not to be treated formally anywhere in the literature, but it is ``folklore'' among category theorists and the proof is an even more straightforward adaptation, so we leave that case to the reader.)
We will not attempt to formalize the string diagrams for shadow functors and transformations used in \S\S\ref{sec:funct-bicat}--\ref{sec:transf}, but we expect them to be a similarly straightforward adaptation once string diagrams for \emph{monoidal} functors and transformations have been formalized.

\subsection{Progressive cylinder graphs}

\begin{defn}
A \textbf{(topological) graph} $\Gamma = (\Gamma, \Gamma_0)$ consists of:
\begin{enumerate}
\item a compact Hausdorff space $\Gamma$, and
\item a discrete closed subset $\Gamma_0\subset \Gamma$, whose complement $\Gamma \setminus \Gamma_0$ is a 1-dimensional manifold without boundary (not in general connected).
\end{enumerate}
The points in $\Gamma_0$ are called the \textbf{vertices} of $\Gamma$, and the connected components of $\Gamma \setminus \Gamma_0$ are called \textbf{edges}.
The \textbf{degree} of a vertex $x$ is the number of connected components of $V\setminus \{x\}$, where $V$ is a sufficiently small connected neighbourhood of $x$.
A \textbf{graph with boundary} is a graph as above together with
\begin{enumerate}[resume]
\item a distinguished subset $\partial\Gamma\subseteq \Gamma_0$ of vertices of $\Gamma$ of degree one.
\end{enumerate}
The elements of $\partial \Gamma$ are called \textbf{outer vertices} and those of $\Gamma_0 \setminus\partial \Gamma$ are called \textbf{inner vertices}.
\end{defn}

In what follows, we will consider $S^1$ as an oriented manifold, but not containing any distinguished points.
(In all of our pictures, we draw the orientation as clockwise.)
However, as noted in \autoref{rmk:noemb}, we do need basepoints on the top and bottom cylinders in order to assign well-defined values.

\begin{defn}
  Let $a < b$ be real numbers.
  A \textbf{progressive cylinder graph} (between the levels $a$ and $b$) is a graph $\Gamma$ with boundary embedded in $S^1\times [a, b]$, together with two points $\delta_a,\delta_b\in S^1$ called the \textbf{domain basepoint} and \textbf{codomain basepoint},  such that
\begin{enumerate}
\item $\partial \Gamma =\Gamma \cap (S^1\times \{a, b\})$,
\item $(\delta_a,a)\notin\Gamma\cap (S^1\times\{a\})$ and $(\delta_b,b)\notin\Gamma\cap (S^1\times\{b\})$, and
\item the second coordinate projection $\proj{2}\colon S^1\times
 [a, b] \rightarrow  [a, b]$ is injective on each edge.\label{item:cylgr3}
\end{enumerate}
\end{defn}

Condition~\ref{item:cylgr3} implies that each edge of $\Gamma$ is homeomorphic to an open interval and acquires a linear ordering from $\proj2$.
Using this ordering we can define the \textbf{source} of an edge as the unique vertex in the closure of its first half, and its \textbf{target} as the vertex in the closure of its second half.
This notion of source and target should not be confused with the domain and codomain of any sort of morphism or cell in a category or bicategory, which is its ``Poincar\'e dual.''

Specifically, the \textbf{domain} $\mathrm{dom}(\Gamma)$ of $\Gamma$ consists of the edges which have
outer vertices as sources, while its \textbf{codomain} $\mathrm{cod}(\Gamma)$ consists of
the edges which have outer vertices as targets.
Note that a given edge could be in both the domain and the codomain, and that our convention of drawing domains at the top and codomains at the bottom means that the real-valued second projection $\proj2$ of our diagrams increases \emph{down} the page.

The set of sources of edges in $\mathrm{dom}(\Gamma)$ is equal to $\Gamma \cap (S^1\times \{a\})$, which does not contain $(\delta_a,a)$.
Since $(S^1\times \{a\}) \setminus \{(\delta_a,a)\}$ is an oriented 1-manifold homeomorphic to an interval, it is linearly ordered, so $\mathrm{dom}(\Gamma)$ also acquires a linear ordering.
Similarly, $\mathrm{cod}(\Gamma)$ acquires a linear ordering from $(S^1\times \{b\})\setminus \{(\delta_b,b)\}$.

For an inner vertex $x$, its \textbf{input} $\mathrm{in}(x)$ is the set of edges of $\Gamma$ that have target $x$.
Similarly, its \textbf{output} $\mathrm{out}(x)$ is the set of edges that have source $x$.
These two sets are linearly ordered as follows.
Choose any point $z \in S^1 \setminus \{\proj2(x)\}$ (think of it as ``on the other side'' of the circle) and let $u\in [a,\proj{2}(x)]$ be such that
\[
\Big(\mathrm{in}(x)\cap \big(S^1\times [u,\proj{2}(x)]\big)\Big) \cap \Big(\{z\}\times[a,b]\Big) \,=\, \emptyset.
\]
Then each edge $e \in\mathrm{in}(x)$ intersects
$\left(S^1\setminus \{z\}\right)\times \{u\}$ in one point, and different edges intersect it in different points.
The linear order on $S^1\setminus \{z\}$ now induces one on $\mathrm{in}(x)$, which is independent of the choices of $z$ and $u$.
The order on $\mathrm{out}(x)$ is defined similarly, by intersecting with $S^1\times \{u\}$ for $u$ larger than but close to $\proj{2}(x)$.

\subsection{Decomposition of progressive cylinder graphs}

\begin{defn}
  Let $\Gamma$ be a progressive cylinder graph such that $\delta_a=\delta_b$.
  A \textbf{tensor decomposition} of $\Gamma$ is an $n$-tuple $(\Gamma^1, \Gamma^2, \ldots, \Gamma^n)$ such that
\begin{itemize} 
\item $\Gamma$ is the disjoint union of subgraphs $\Gamma^1,\Gamma^2,\ldots, \Gamma^n$, 
\item there are pairwise disjoint, connected, open subsets $U^1,U^2,\ldots, U^n$ in $S^1$ such that
\[\Gamma^i\subset U^i\times[a,b],\]
\item we have
  \[\delta_a=\delta_b<U^1<U^2<\dots <U^n\]
  in the cyclic order on $S^1$ induced by its orientation.
\end{itemize}
\end{defn}
Given such a tensor decomposition, we write 
\[\Gamma =\sh{\Gamma^1\odot \ldots \odot \Gamma^n}.\]
We will also write $\delta$ for the common value of $\delta_a$ and $\delta_b$.

A connected graph with exactly one inner vertex is called \textbf{prime}.
A graph with no inner vertices is called \textbf{invertible}.
A graph $\Gamma$ is called \textbf{elementary} when it has a tensor decomposition
$\Gamma= \sh{\Gamma^1 \odot \ldots\odot \Gamma^n}$ with each $\Gamma^i$
($1 \leq i\leq n$) either prime or invertible, such as shown in \autoref{fig:elemdiag}.
Note that such a tensor decomposition is not unique.
Moreover, a graph can be prime or invertible without being elementary: this can happen if $\delta_a\neq\delta_b$, or if $\delta_a=\delta_b$ but $\big(\{\delta_a\} \times [a,b] \big) \cap \Gamma \neq\emptyset$, as shown in \autoref{fig:noneleminv}.

\begin{figure}
  \centering
  \begin{tikzpicture}[label distance=-2pt]
    \begin{scope}
      \bgcylinder{-4,0}{2.5}{4}{.5}{}{}
      \node[vert] (f) at (3,1) {};
      \draw (f) -- ++(-.8,2);
      \draw (f) -- ++(0,2);
      \draw (f) -- ++(.7,2);
      \draw (f) -- ++(-.6,-2);
      \draw (f) -- ++(.6,-2);
      \draw (bot) -- ++(1.5,0) to[out=90,in=-90] ($(top)+(-.5,0)$);
      \draw (bot) -- ++(1,0) to[out=90,in=-90] ($(top)+(-1,0)$);
      \node[vert] (g) at (-2.5,.7) {};
      \draw (g) -- ++(-.3,2);
      \draw (g) -- ++(.7,2);
      \draw (g) to[rrel,out=-120,in=90] (-.6,-2);
    \end{scope}
    \path (bot) ++(0,-.3) coordinate (Ubot);
    \draw[<->] (g |- Ubot) ++(-.7,0) -- node[below] {$U^1$} ++(1.5,0);
    \draw[<->] (Ubot) ++(-1,0) -- node[below] {$U^2$} ++(2.8,0);
    \draw[<->] (f |- Ubot) ++(-.6,0) -- node[below] {$U^3$} ++(1.4,0);
    \path (ul) arc (-180:-155:4 and .5) node[vert,fill=black,label=above right:$\delta_a$] {};
    \path (dl) arc (-180:-155:4 and .5) node[vert,fill=black,label=below left:$\delta_b$] {};
  \end{tikzpicture}
  \caption{Part of a tensor-decomposed elementary diagram}
  \label{fig:elemdiag}
\end{figure}
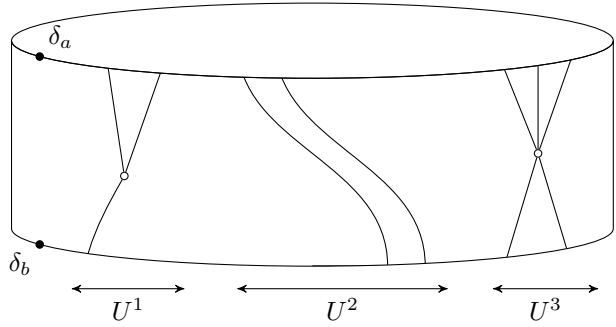

\begin{figure}
  \centering
  \begin{tikzpicture}[label distance=-2pt]
    \begin{scope}
      \bgcylinder{0,0}{2}{1.6}{.4}{}{}
      \draw (top) ++(-.3,0) -- ++(.2,-3);
      \draw (top) ++(-1.1,0) -- ++(-.1,-3);
      \draw (top) ++(1,0) -- ++(-.2,-3);
    \end{scope}
    \path (ul) arc (-180:-120:1.6 and .4) node[vert,fill=black,label=above right:$\delta_a$] {};
    \path (dl) arc (-180:-70:1.6 and .4) node[vert,fill=black,label=below:$\delta_b$] {};
  \end{tikzpicture}
  \hspace{2cm}
  \begin{tikzpicture}
    \begin{scope}
      \bgcylinder{0,0}{2.5}{1.6}{.4}{}{}
      \drawtheta{0,0}{1.5}{theta}
      \draw (top) ++(1,-.3) to[out=-90,in=90,looseness=.4] (thetaL);
      \draw (bot) ++(-1,.1) to[out=90,in=-90,looseness=.4] (thetaR);
    \end{scope}
    \path (ul) arc (-180:-90:1.6 and .4) node[vert,fill=black,label=above:$\delta_a$] {};
    \path (dl) arc (-180:-90:1.6 and .4) node[vert,fill=black,label=below:$\delta_b$] {};
  \end{tikzpicture}
  \caption{Some non-elementary invertible diagrams}
  \label{fig:noneleminv}
\end{figure}
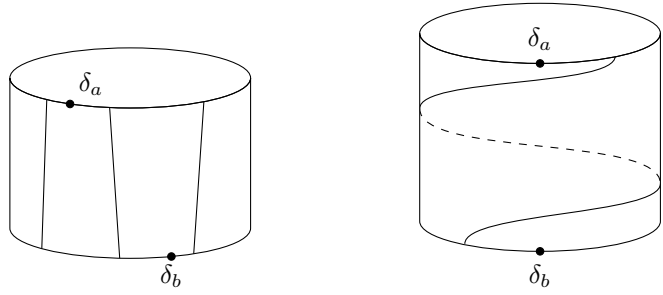

We now describe how to decompose an arbitrary graph into elementary and invertible ones.
For any progressive cylinder graph $\Gamma$, a number $u \in [a, b]$ is called a \textbf{regular level} for $\Gamma$ when the circle $S^1\times \{u\}$ contains no inner vertices.
If $c < d$ are regular levels of $\Gamma$ and $\delta_c$ and $\delta_d$ are points of $S^1$ such that
\begin{align*}
  (\delta_c,c) &\notin \Gamma\cap (S^1\times \{c\}) \qquad\text{and}\\
  (\delta_d,d) &\notin \Gamma\cap (S^1\times \{d\})
\end{align*}
we write $\Gamma[c,d]$ (leaving $\delta_c$ and $\delta_d$ implicit in the notation) for the graph $\Gamma \cap (S^1\times [c,d])$ with domain and codomain basepoints $\delta_c$ and $\delta_d$.
Its set of inner vertices is $(\Gamma_0\setminus\partial \Gamma)\cap (S^1 \times [c,d])$, and its set of outer vertices is $\Gamma \cap (S^1 \times \{c,d\})$.
It is a progressive cylinder graph between the levels $c$ and $d$, and is called a \textbf{layer} of $\Gamma$.

\begin{lem}\label{regularlevels}
For each progressive cylinder graph $\Gamma$ there are real numbers 
\[a=u_0<u_1<u_2<\ldots < u_n=b\]
and points $\delta_1,\dots,\delta_{n-1}\in S^1$ such that each $u_i$ is a regular level, and 
each layer $\Gamma[u_i,u_{i+1}]$ is either elementary or invertible.
\end{lem}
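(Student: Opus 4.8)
The strategy is the standard ``slicing'' argument from~\cite{js:geom-tenscalc-i}, adapted to the cylinder. First I would choose the cut levels. Since $\Gamma$ is compact and the inner vertices form a finite discrete set (being closed and discrete in a compact space), only finitely many distinct values of $\proj2$ occur among the inner vertices; call them $v_1 < v_2 < \dots < v_m$. Pick real numbers $a = u_0 < u_1 < \dots < u_n = b$ so that each open interval $(u_i, u_{i+1})$ contains at most one $v_j$, and so that no $u_i$ equals any $v_j$; then each $u_i$ is automatically a regular level. Now each layer $\Gamma[u_i, u_{i+1}]$ contains inner vertices lying on at most one horizontal circle $S^1\times\{v_j\}$.

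Next I would choose the intermediate basepoints $\delta_i$. For each regular level $u_i$ with $0 < i < n$, the finite set $\Gamma\cap(S^1\times\{u_i\})$ is a proper closed subset of the circle, so its complement is nonempty and I can pick $\delta_i$ there; I also need $\delta_0 = \delta_a$, $\delta_n = \delta_b$ (which by hypothesis avoid $\Gamma$ on their respective circles). I should take a little care here: I want each layer $\Gamma[u_i,u_{i+1}]$, viewed with basepoints $\delta_i$ and $\delta_{i+1}$, to have the property that if it is to be elementary then $\big(\{\delta_i\}\times[u_i,u_{i+1}]\big)\cap\Gamma = \emptyset$ and, when there is one vertex circle, $\delta_i$ can be chosen ``behind'' all the vertices. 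Concretely, if the layer contains inner vertices, I choose $\delta_i$ (and $\delta_{i+1}$) to lie in a single connected arc of $S^1$ that is disjoint from the projections to $S^1$ of all edges of that layer near the vertex circle — such an arc exists because the layer deformation-retracts onto a finite union of edges and vertices, and the ``back of the cylinder'' relative to the (finitely many) vertices is available. This uses the same input/output ordering machinery already set up before the lemma.

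Then I verify the dichotomy for each layer. Fix a layer $L = \Gamma[u_i, u_{i+1}]$. If $L$ has no inner vertex it is by definition invertible, and we are done with that layer. Otherwise all its inner vertices lie on one circle $S^1\times\{v_j\}$. Let $x_1, \dots, x_k$ be these vertices, read off in the cyclic order of their $S^1$-coordinates starting just after $\delta_i$ (equivalently $\delta_{i+1}$, which I have arranged to lie in the same ``gap''). I claim $L = \sh{\Gamma^1 \odot \dots \odot \Gamma^k}$ where $\Gamma^\ell$ is the connected component of $L$ containing $x_\ell$. For this I need: (a) these components, together with the edge-components containing no inner vertex, can be separated by pairwise disjoint connected open sets $U^1 < U^2 < \dots$ in $S^1$ ordered after $\delta$; (b) each $\Gamma^\ell$ is prime, i.e.\ connected with exactly one inner vertex; (c) any leftover invertible pieces get absorbed into adjacent $U^\ell$'s or form their own invertible factors. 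Point (b) holds because an edge of a progressive cylinder graph is injective over $[u_i,u_{i+1}]$, so no edge can reach from one vertex circle to another within a single layer — there is only one vertex circle — hence a connected component of $L$ has at most one inner vertex, and exactly one if it contains any. Point (a) is where I localize near the vertex circle: choosing $u$ slightly below (and slightly above) $v_j$ and using the already-established fact that distinct edges meet the slice $S^1\times\{u\}$ in distinct points, I get finitely many points on a circle that I can enclose in disjoint arcs, one per vertex, each arc ``thickened vertically'' to a product neighborhood $U^\ell \times [u_i, u_{i+1}]$ containing $\Gamma^\ell$; away from the vertex circle the components are thin enough (injective over the interval) to stay inside. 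The ordering $\delta < U^1 < \dots$ comes from the cyclic order on $S^1$ anchored at $\delta$.

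\textbf{Main obstacle.} The genuinely delicate part is (a) together with the consistent choice of the $\delta_i$: one must separate the components of a layer by vertically-embedded product neighborhoods $U^\ell\times[u_i,u_{i+1}]$ whose $S^1$-factors are linearly ordered starting from $\delta$, and simultaneously arrange that $\{\delta_i\}\times[u_i,u_{i+1}]$ misses $\Gamma$ in every layer. On the plane (the Joyal--Street setting) this is automatic because there is no ``wrap-around''; on the cylinder the cyclic order and the need for a cut point make it necessary to choose the $\delta_i$ with foresight and to confine the branching of each component to a narrow vertical strip near its vertex circle. I expect the bulk of the written-out proof to be this bookkeeping — compactness to get finitely many vertex levels, the injectivity-over-intervals property of edges to prevent components from spanning two vertex circles, and a slice-and-enclose argument near each vertex circle — all of which is routine but fiddly, exactly as one expects from transplanting~\cite{js:geom-tenscalc-i} to $S^1\times[a,b]$.
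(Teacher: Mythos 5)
There is a genuine gap in your choice of cut levels. You slice so that each layer contains at most one vertex circle $S^1\times\{v_j\}$, which makes each layer extend the whole way from (roughly) the previous vertex level to the next one, and you then claim such a tall layer is elementary. That claim fails. Elementarity requires every connected component $\Gamma^\ell$ of the layer to sit inside a product $U^\ell\times[u_i,u_{i+1}]$ with the $U^\ell$ pairwise disjoint connected open arcs all avoiding a common basepoint $\delta$; in particular $\{\delta\}\times[u_i,u_{i+1}]$ must miss the layer entirely. But progressivity only says that $\proj{2}$ is injective on each edge, i.e.\ each edge is the graph of a continuous map from a height interval to $S^1$; it says nothing about the image of an edge in $S^1$ being small. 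An edge can wind arbitrarily far around the cylinder as it traverses the layer (this is exactly the phenomenon of \autoref{fig:noneleminv}, where an invertible graph fails to be elementary because $(\{\delta\}\times[a,b])\cap\Gamma\neq\emptyset$ for every $\delta$). Such an edge — whether it is attached to one of the vertices at level $v_j$ or merely passes through the layer — cannot be enclosed in any proper arc $U^\ell$ and blocks every choice of basepoint, so your step (a) ("away from the vertex circle the components are thin enough... to stay inside") is false, and with it the claimed tensor decomposition of the layer.

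The paper avoids this by sandwiching each non-regular level between two regular levels that are \emph{very close together}: the resulting thin layer confines the graph near each vertex to small arcs (by continuity), so it genuinely is elementary, while the complementary thick layers contain no inner vertices at all and are therefore invertible — and the lemma only asks each layer to be elementary \emph{or} invertible, so the thick, possibly badly winding layers never need to be decomposed. Your finiteness argument for the vertex levels and your observation that no edge can join two vertices on the same circle (so each component of a thin vertex layer has at most one inner vertex) are both correct and are the right ingredients; the fix is simply to take $n=2k+1$ layers (thin elementary ones around each of the $k$ vertex levels, alternating with vertex-free invertible ones) rather than $k+1$ tall ones.
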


\begin{proof}
We first choose real numbers $u_1 ,\ldots ,u_{n-1}$ very close to, and on both sides of, each non-regular level.
In other words, the first non-regular level is sandwiched very close between $u_1$ and $u_2$, the next is between $u_3$ and $u_4$, and so on---so in particular $n=2k+1$ if there are $k$ non-regular levels.
If ``close'' is close enough, then we get decompositions 
\[
\Gamma[u_{2i+1},u_{2i+2}] =\Gamma^1_i\amalg \Gamma^2_i\amalg \ldots
\amalg \Gamma^{k_i}_i
\]
where each $\Gamma^j_i$, $1\leq j\leq k_i$, is prime or invertible.
For each $i$ we can choose a point $\delta_{2i+1} = \delta_{2i+2}$ to make this into a tensor decomposition, so that each layer $\Gamma[u_{2i+1},u_{2i+2}]$ is elementary.
Moreover, each layer $\Gamma[u_{2i},u_{2i+1}]$ is invertible by definition.
An example of the resulting decomposition is shown in \autoref{fig:regularlevels}.
\end{proof}

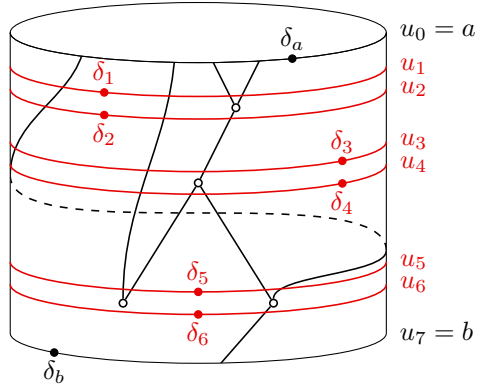
\begin{figure}
  \centering
  \begin{tikzpicture}[label distance=-2pt]
    \begin{scope}[semithick]
      \bgcylinder{-2.5,0}{4}{2.5}{.4}{}{}
      \node[vert] (f) at (0,2) {};
      \node[vert] (g) at (1,.4) {};
      \node[vert] (h) at (-1,.4) {};
      \node[vert] (k) at (.5,3) {};
      \draw (f) -- (k);
      \draw (k) -- (top);
      \draw (k) -- ($(top)+(1,0)$);
      \drawtheta{f}{.1}{theta}
      \draw (f) -- (g);
      \draw (f) -- (h);
      \draw (h) to[out=90,in=-90] ($(top)+(-.3,0)$);
      \draw (g) -- ($(bot)+(.3,0)$);
      \draw (thetaR) to[out=-90,in=90,looseness=.5] (g);
      \draw (thetaL) to[out=90,in=-90,looseness=1] ($(top)+(-1.5,0)$);
    \end{scope}
    \path (ur) node[label={right:$u_0=a$}] {};
    \path (dr) node[label={right:$u_7=b$}] {};
    \path (ul) arc (-180:-60:2.5 and .4) node[vert,fill=black,label=above:$\delta_a$] {};
    \path (dl) arc (-180:-140:2.5 and .4) node[vert,fill=black,label=below:$\delta_b$] {};
    \begin{scope}[color=red!90!black,semithick]
      \draw (dl |- k) ++(0,.55) arc (-180:0:2.5 and .4) node[label=right:$u_1$] {};
      \draw (dl |- k) ++(0,.25) arc (-180:0:2.5 and .4) node[label=right:$u_2$] {};
      \draw (dl |- f) ++(0,.55) arc (-180:0:2.5 and .4) node[label=right:$u_3$] {};
      \draw (dl |- f) ++(0,.25) arc (-180:0:2.5 and .4) node[label=right:$u_4$] {};
      \draw (dl |- g) ++(0,.55) arc (-180:0:2.5 and .4) node[label=right:$u_5$] {};
      \draw (dl |- g) ++(0,.25) arc (-180:0:2.5 and .4) node[label=right:$u_6$] {};
    \end{scope}
    \begin{scope}[every node/.style={color=red!90!black,vert,fill=red!90!black}]
      \path (dl |- k) ++(0,.55) arc (-180:-120:2.5 and .4) node[label=above:$\delta_1$] {};
      \path (dl |- k) ++(0,.25) arc (-180:-120:2.5 and .4) node[label=below:$\delta_2$] {};
      \path (dl |- f) ++(0,.55) arc (-180:-40:2.5 and .4) node[label=above:$\delta_3$] {};
      \path (dl |- f) ++(0,.25) arc (-180:-40:2.5 and .4) node[label=below:$\delta_4$] {};
      \path (dl |- g) ++(0,.55) arc (-180:-90:2.5 and .4) node[label=above:$\delta_5$] {};
      \path (dl |- g) ++(0,.25) arc (-180:-90:2.5 and .4) node[label=below:$\delta_6$] {};
    \end{scope}
  \end{tikzpicture}
  \caption{A decomposition into elementary and invertible layers}
  \label{fig:regularlevels}
\end{figure}

\subsection{Valuations of progressive cylinder graphs}

Recall that for a progressive cylinder graph $\Gamma$, we denote  its set of vertices by $\Gamma_0$.
We will denote its set of edges by $\Gamma_1$, and the set of connected components of $(S^1\times[a,b])\setminus \Gamma$ by $\Gamma_2$.
For each edge $e\in\Gamma_1$, we can identify which of these components lie to its right and left.
More formally, we define two functions, $\sR$ and $\sL$, from $\Gamma_1$ to $\Gamma_2$ as follows.
For each $e\in\Gamma_1$, choose some $x\in e$.
Then there is a point $y\in S^1\times \{\proj{2}(x)\}$ so that, for all edges $e'$ such that $e'\cap (S^1\times \proj{2}(x))$ is nonempty, we have
\[\proj{1}(x)<\proj{1}(y)<\proj{1}(e'\cap (S^1\times \proj{2}(x)))\]
in the cyclic order induced from the orientation on $S^1$.
Then we define $\sR(e)$ to be the component of $(S^1\times[a,b])\setminus \Gamma$ that contains $y$.
This function is well defined (and in particular independent of the choices of $x$ and $y$).
The function $\sL$ is defined similarly to identify the component to the left.

\begin{defn}
  A \textbf{valuation} $v\colon\Gamma \rightarrow \sB$ of a progressive cylinder graph $\Gamma$ in a bicategory $\sB$ consists of
\begin{itemize}

\item A function $v_2\colon \Gamma_2\rightarrow \mathrm{obj}(\sB)$.

\item For each edge $e\in\Gamma_1$, a 1-cell
\[v_1(e)\colon v_2(\sL(e))\hto v_2(\sR(e))\]

\item For each inner vertex $x\in \Gamma_0 \setminus \partial\Gamma$, a 2-cell
\[v_0(x)\colon v_1(\sigma_1)\odot \cdots \odot v_1(\sigma_m)
\rightarrow v_1(\tau_1)\odot \cdots \odot v_1(\tau_k)\]
where $\sigma_1 < \cdots <\sigma_m$ and $\tau_1<\cdots < \tau_k$ are the ordered lists of elements of $\mathrm{in}(x)$ and $\mathrm{out}(x)$, respectively.

\end{itemize}
The pair $(\Gamma, v)$ is called a \textbf{(progressive cylinder) diagram in} $\sB$, and is denoted merely by $\Gamma$ when the context is clear.
\end{defn}

We observe that the definitions of the order on $\mathrm{in}(x)$ and $\mathrm{out}(x)$ and of $\sL$ and $\sR$ imply that the composites $\odot$ in the source and target of $v_0(x)$ are defined.

If $c < d$ are regular levels for a diagram $\Gamma= (\Gamma, v)$, a valuation $v$ ``restricts'' in an obvious way to a valuation on the layer $\Gamma[c, d]$, which we also denote by $v$.
Similarly, if $\Gamma=\sh{\Gamma^1\odot \Gamma^2}$, then $v$ restricts to valuations on $\Gamma^1$ and $\Gamma^2$ again denoted by $v$.

It should hopefully be clear that the pictures we have drawn in \S\S\ref{sec:shadows}, \ref{sec:bicat-traces}, and \ref{sec:prop-bicat} can be made precise as progressive cylinder diagrams equipped with valuations in some bicategory.
We have denoted the function $v_2$ by assigning colors to connected components and the functions $v_1$ and $v_0$ by adding labels.

Our next goal is to define the \emph{value} of such a labeled diagram.
We begin with elementary diagrams, then invertible diagrams, and finally put these together to handle arbitrary diagrams.

\subsection{Value of an elementary diagram}

Suppose that $\Gamma$ is elementary, with a given tensor decomposition $\Gamma= \sh{\Gamma^1 \odot \ldots\odot \Gamma^n}$ in which each $\Gamma^i$ is either prime or invertible, and that \sB\ is equipped with a shadow functor with target category \bT.
We will also henceforth assume, for simplicity, that \sB\ is a strict 2-category.
There is no loss of generality in this, since every bicategory is equivalent to a strict 2-category with the same set of objects (0-cells), and a shadow functor can easily be transported across such an equivalence.
Then the \textbf{value} of $\Gamma$ is
\[v(\Gamma)=\bigsh{w(\Gamma^1)\odot \ldots \odot w(\Gamma^n)},\]
where
\[w(\Gamma^i)=
\begin{cases}
  v_0(x) &
  \text{if $\Gamma^i$ is prime with inner vertex $x$,}\\
  \id_{v_1(e_1)\odot \ldots \odot v_1(e_n)} &
  \text{if $\Gamma^i$ is invertible containing edges } e_1 < e_2 < \ldots < e_n.
\end{cases}
\]
In the second case, the order referred to is the linear order on $S^1\setminus \{\delta\}$, with $\delta$ the common domain and basepoint of $\Gamma$.
Strictly speaking we should consider the order of the points $e_j\cap (S^1 \times \{u\})$ for some $u\in [a,b]$, but continuity and progressivity imply that the resulting order is independent of the choice of $u$.
The assumed strictness of \sB\ ensures that no choices are involved in defining an $n$-ary composite of 1-cells in \sB.

Note that each $w(\Gamma^i)$ is a 2-cell in \sB, whereas $v(\Gamma)$ is a morphism in \bT.
The domain of $v(\Gamma)$ is $\sh{v_1(\sigma_1)\odot \dots \odot v_1(\sigma_n)}$ and its codomain is $\sh{v_1(\tau_1)\odot \dots \odot v_1(\tau_m)}$, where $\sigma_1 < \cdots <\sigma_n$ and $\tau_1<\cdots < \tau_m$ are the ordered lists of elements of $\mathrm{dom}(\Gamma)$ and $\mathrm{cod}(\Gamma)$, respectively.

\begin{lem}\label{lem:tensdecomp1}
Any two tensor decompositions of an elementary diagram $\Gamma$ have the same value.
\end{lem}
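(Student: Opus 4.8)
The statement asserts that the value of an elementary diagram does not depend on the choice of tensor decomposition $\Gamma = \sh{\Gamma^1 \odot \dots \odot \Gamma^n}$. The plan is to first reduce to comparing two decompositions that differ in a single, minimal way, and then to verify invariance under that elementary move using the axioms of a shadow functor.

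First I would observe that any two tensor decompositions of an elementary $\Gamma$ can be connected by a finite sequence of ``moves'' of the following two kinds: (a) merging two adjacent blocks $\Gamma^i, \Gamma^{i+1}$ into a single block $\Gamma^i \amalg \Gamma^{i+1}$ (legitimate precisely when the union is still either prime or invertible — which forces at least one of them to be invertible, since a prime graph has exactly one inner vertex), and its inverse, splitting; and (b) sliding an invertible block cyclically around the back of the cylinder past the basepoint $\delta$, which reindexes the blocks cyclically. Because the underlying decomposition of $\Gamma$ into connected components $\Gamma^1 \amalg \dots \amalg \Gamma^m$ (each prime or invertible) is intrinsic — it is just the set of connected components together with the unique choice of which invertible pieces have been grouped with which neighbours — any two tensor decompositions are obtained from one another by choosing different cyclic starting points and different groupings of the (trivially-valued) invertible pieces with adjacent pieces. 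So it suffices to check that $v(\Gamma)$ is unchanged under each of moves (a) and (b).

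For move (a), merging a block $\Gamma^i$ with an adjacent invertible block into a larger block changes the tensor factor $w(\Gamma^i) \odot w(\Gamma^{i+1})$ — a composite of a 2-cell with an identity 2-cell — into the single 2-cell $w(\Gamma^i \amalg \Gamma^{i+1})$; but in a strict 2-category $w(\Gamma^i) \odot \id = w(\Gamma^i) = \id \odot w(\Gamma^i)$, so the argument of $\sh{-}$ is literally unchanged. (The assumed strictness of \sB, and the observation in the definition of value that the linear order of edges within an invertible block is independent of the chosen level $u$, ensure there is no ambiguity here.) For move (b), sliding an invertible block $\Gamma^j$ from the far right end around to the far left end changes the argument of $\sh{-}$ from $w(\Gamma^1) \odot \dots \odot w(\Gamma^{n-1}) \odot \id_E$ to $\id_E \odot w(\Gamma^1) \odot \dots \odot w(\Gamma^{n-1})$, where $E = v_1(e_1) \odot \dots \odot v_1(e_k)$ is the composite 1-cell carried by that block; applying $\sh{-}$ and then the cyclicity isomorphism $\theta$ of the shadow functor to move $E$ around identifies these two values, using naturality of $\theta$ to commute it past the 2-cell $w(\Gamma^1) \odot \dots \odot w(\Gamma^{n-1})$. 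I expect the main obstacle to be the bookkeeping of move (b): one must check that the particular instance of $\theta$ produced this way is exactly the one built into the definition of $v(\Gamma)$ for the reindexed decomposition, and that repeated such slides compose coherently — this is precisely where the hexagon and unit axioms of \autoref{def:shadow}, together with \autoref{shadow-symm}, are needed to guarantee that the composite cyclic rotation is well-defined and independent of how it is decomposed into elementary slides. Once moves (a) and (b) are handled, a short induction on the number of moves connecting two decompositions completes the proof.
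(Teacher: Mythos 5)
Your move (a), in the only case where it is actually legitimate (both blocks invertible), is exactly the paper's proof: prime blocks are tensor-indecomposable and cannot sit inside a prime or invertible block, so they occur in every tensor decomposition, and the only freedom is in regrouping invertible blocks; since the edge list of $\Omega^1\odot\Omega^2$ is the ordered concatenation of those of $\Omega^1$ and $\Omega^2$, one has $w(\Omega^1\odot\Omega^2)=w(\Omega^1)\odot w(\Omega^2)$ and the argument of $\sh{-}$ is unchanged. One small slip there: merging adjacent blocks is legitimate only when \emph{both} are invertible, not ``at least one'' --- the disjoint union of a prime block with any nonempty neighbour is disconnected (hence not prime) and contains an inner vertex (hence not invertible), so your computation of $w(\Gamma^i)\odot\id$ for a prime $\Gamma^i$ concerns a move that cannot occur.

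The substantive problem is your move (b), which does not exist and whose treatment would not prove the lemma if it did. In the definition of tensor decomposition the common basepoint $\delta=\delta_a=\delta_b$ is part of the data of the progressive cylinder graph $\Gamma$, not of the decomposition, and the requirement that $\delta<U^1<\cdots<U^n$ in the cyclic order pins down the linear order of the blocks; so two tensor decompositions of the same elementary $\Gamma$ can never differ by a cyclic shift. (The freedom to move basepoints enters only later, in the well-definedness of the value of a general diagram, where it is indeed controlled by $\theta$ via \autoref{refineval-inv}.) Moreover, your own analysis of move (b) only shows that the two candidate values are \emph{intertwined} by a nontrivial $\theta^n_k$ --- they do not even have the same domain and codomain in $\bT$, since $\mathrm{dom}(\Gamma)$ and $\mathrm{cod}(\Gamma)$ are linearly ordered by $\delta$. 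So if cyclic shifts were genuinely available, the lemma would be false and your argument would not close the gap; the proof goes through only because that case is vacuous. Deleting move (b) entirely, and restricting move (a) to invertible--invertible merges, leaves precisely the paper's argument.
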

\begin{proof}
Since prime diagrams are tensor indecomposable and cannot occur inside any other prime or invertible diagram, any prime diagram occurring in one tensor decomposition of a given $\Gamma$ must occur in all others.
Thus, the only way tensor decompositions can differ comes from the fact that the tensor of two invertible diagrams is again invertible (and conversely, if $\Omega^1\odot \Omega^2$ is invertible then so must $\Omega^1$ and $\Omega^2$ be).
Since in this case the list of edges of $\Omega^1\odot \Omega^2$ is the ordered sum of those in $\Omega^1$ and $\Omega^2$, and therefore $w(\Omega^1\odot\Omega^2) = w(\Omega^1)\odot w(\Omega^2)$, such differences make no change in the value.
\end{proof}

\begin{prop}\label{refinevaluation} If $u$ is a regular level for an elementary diagram $\Gamma$
between levels $a$ and $b$ then $\Gamma[a, u]$, $\Gamma[u, b]$ are elementary (with $\delta_u$ defined to be the common domain and codomain basepoint $\delta$ of $\Gamma$), and
\[v(\Gamma) = v(\Gamma[u, b])\circ v(\Gamma[a, u]).\]
\end{prop}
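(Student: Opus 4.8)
The plan is to compare the two sides via a common refinement of tensor decompositions. Fix a tensor decomposition $\Gamma = \sh{\Gamma^1\odot\dots\odot\Gamma^n}$ realized by open sets $U^1<\dots<U^n$ in $S^1$. Since $u$ is a regular level, cutting along $S^1\times\{u\}$ restricts each $\Gamma^i$ to $\Gamma^i[a,u]$ and $\Gamma^i[u,b]$, each of which sits inside $U^i\times[a,u]$ (resp. $U^i\times[u,b]$); moreover each remains connected-with-one-inner-vertex or inner-vertex-free, hence prime or invertible. (Here one uses that a prime diagram has its unique inner vertex at a single level, so it lies entirely on one side of the regular level $u$, and on the other side its restriction is invertible.) Thus $\Gamma[a,u]$ and $\Gamma[u,b]$ inherit tensor decompositions with the same index set, so they are elementary; by \autoref{lem:tensdecomp1} the values $v(\Gamma[a,u])$ and $v(\Gamma[u,b])$ do not depend on this choice.

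Next I would identify the $w$-data levelwise. For each $i$, if $\Gamma^i$ is invertible then both $\Gamma^i[a,u]$ and $\Gamma^i[u,b]$ are invertible, with the same ordered list of edges (cutting at a regular level does not create or reorder edges), so $w(\Gamma^i[a,u]) = w(\Gamma^i[u,b]) = w(\Gamma^i)$, an identity 2-cell. If $\Gamma^i$ is prime with inner vertex $x$ at level $\proj2(x)$, then $x$ lies in exactly one of the two slabs — say $\proj2(x)<u$ — whence $\Gamma^i[a,u]$ is prime with inner vertex $x$ and $w(\Gamma^i[a,u]) = v_0(x) = w(\Gamma^i)$, while $\Gamma^i[u,b]$ is invertible with $w(\Gamma^i[u,b]) = \id$ on the ordered list of edges $\mathrm{out}(x)$; the symmetric statement holds when $\proj2(x)>u$. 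In all cases $w(\Gamma^i) = w(\Gamma^i[u,b])\circ w(\Gamma^i[a,u])$ as composable 2-cells in \sB, where composability uses that the codomain list of $w(\Gamma^i[a,u])$ equals the domain list of $w(\Gamma^i[u,b])$ (both being the ordered edge list of $\Gamma^i$ crossing level $u$).

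Finally I would assemble these. The value $v(\Gamma[a,u]) = \sh{w(\Gamma^1[a,u])\odot\dots\odot w(\Gamma^n[a,u])}$, and similarly for $[u,b]$; composing in \bT\ and using functoriality of the shadow $\sh{-}\maps\sB(R,R)\to\bT$ together with the interchange law in the $2$-category \sB\ (so that $\bigodot_i (w(\Gamma^i[u,b])\circ w(\Gamma^i[a,u])) = (\bigodot_i w(\Gamma^i[u,b]))\circ(\bigodot_i w(\Gamma^i[a,u]))$), we get
\[
v(\Gamma[u,b])\circ v(\Gamma[a,u]) = \Bigsh{\textstyle\bigodot_i w(\Gamma^i[u,b])}\circ\Bigsh{\textstyle\bigodot_i w(\Gamma^i[a,u])} = \Bigsh{\textstyle\bigodot_i w(\Gamma^i)} = v(\Gamma).
\]
One small point to check along the way is that the composite $\bigodot_i w(\Gamma^i[a,u])$ is genuinely an endo-$1$-cell so that $\sh{-}$ applies — this follows because $\Gamma$ is a progressive cylinder graph with $\delta_a=\delta_b$, so its (co)domain edge lists close up into loops on a single object, and regular-level restrictions preserve this.

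\textbf{Main obstacle.} The delicate part is the bookkeeping showing that a prime summand sits wholly on one side of the regular level and that cutting a general elementary diagram at a regular level again yields elementary diagrams with compatible tensor decompositions — i.e., verifying the levelwise factorization $w(\Gamma^i) = w(\Gamma^i[u,b])\circ w(\Gamma^i[a,u])$ and that the orders on the intermediate edge lists match up so the composites in \sB\ are defined. Once that combinatorial/topological matching is in hand, the algebra is just functoriality of $\sh{-}$ plus the interchange law.
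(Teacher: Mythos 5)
Your proposal is correct and follows essentially the same route as the paper's proof: reduce to the levelwise factorization $w(\Gamma^i)=w(\Gamma^i[u,b])\circ w(\Gamma^i[a,u])$ for each prime or invertible summand, observe that the tensor decomposition of $\Gamma$ induces compatible ones on the two layers, and conclude by functoriality of the shadow and of $\odot$ (interchange). The extra bookkeeping you spell out (the inner vertex of a prime piece lying on one side of $u$, matching of intermediate edge lists) is exactly what the paper leaves implicit.
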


\begin{proof}
First note that if $\Omega$ is an invertible progressive cylinder diagram between $a$ and $b$, and $u$ is a regular level for $\Omega$, then both $\Omega[a,u]$ and $\Omega[u,b]$ are invertible.
If instead $\Omega$ is prime, then one of $\Omega[a,u]$ and $\Omega[u,b]$ is prime and the other is invertible.
In either case, we have $w(\Omega)=w(\Omega[u,b])\circ w(\Omega[a,u])$.

Now suppose $\Gamma$ has tensor decomposition $\sh{\Gamma^1\odot
\Gamma^2\odot \ldots \Gamma ^n}$. Then $\Gamma[a,u]$
has tensor decomposition 
\[\bigsh{\Gamma^1[a,u]\odot \Gamma^2[a,u]\odot \cdots \odot \Gamma ^n[a,u]}\]
and $\Gamma[u,b]$ has tensor decomposition
\[\bigsh{\Gamma^1[u,b]\odot \Gamma^2[u,b]\odot \cdots\odot \Gamma ^n[u,b]}.\]

Using the definition of the valuation, functoriality of
the shadow, functoriality of $\odot$, and the
observation above, we have
\begin{align*}
  v(\Gamma[u,b])&\circ v(\Gamma[a,u])\\
  &=\; \Bigsh{w(\Gamma^1[u,b])\odot\cdots\odot w(\Gamma ^n[u,b])} \;\circ\;
  \Bigsh{w(\Gamma^1[a,u])\odot\cdots\odot w(\Gamma ^n[a,u])}\\
  &=\; \Bigsh{\big(w(\Gamma^1[u,b])\odot\cdots\odot w(\Gamma ^n[u,b])\big)\circ
    \big(w(\Gamma^1[a,u])\odot\cdots\odot w(\Gamma ^n[a,u])\big)}\\
  &=\; \Bigsh{\big(w(\Gamma^1[u,b])\circ w(\Gamma^1[a,u])\big)
    \odot\cdots\odot \big(w(\Gamma ^n[u,b])\circ w(\Gamma ^n[a,u])\big)}\\
  &=\; \bigsh{w( \Gamma^1)\odot \cdots \odot w(\Gamma ^n)}\\
  &=\; v(\Gamma).\qedhere
\end{align*}
\end{proof}

\subsection{Value of an invertible diagram}

We also need to define directly the value of an invertible diagram (not necessarily elementary).
First we introduce some notation.
Suppose $M_1,\dots,M_{n}$ are ``cyclically composable'' 1-cells, in the sense that $M_k\odot M_{(k+1)\operatorname{mod}n}$ is defined for all $1\le k\le n$.
Then for any $0\le k\le n$, we write
\begin{multline*}
  \theta^n_k \coloneqq \theta_{M_1\odot\dots\odot M_{k \operatorname{mod} n} \;,\; M_{(k+1) \operatorname{mod} n}\odot\dots\odot M_{n} }
  \;\colon\\
  \Bigsh{M_1 \odot\dots\odot M_{n}} \;\too[\iso]\;
  \Bigsh{M_{(k+1) \operatorname{mod} n}
  \odot\dots\odot M_{n} \odot M_1 \odot\cdots\odot
  M_{k \operatorname{mod} n}}.
\end{multline*}
Note that $\theta^n_0 = \theta^n_n = \id$ by the unit axioms for $\theta$.
We claim that also
\begin{equation}
  \theta^n_{k}\circ \theta^n_{m} = \theta^n_{(k+m) \operatorname{mod} n}\label{eq:theta-add}
\end{equation}
for any $k,m \in \{0,\dots,n\}$.
If $k+m \le n$, this follows immediately from the hexagon axiom for $\theta$, while if $k+m > n$, we can write
\[
\theta^n_k \circ \theta^n_m = \theta^n_k \circ \theta^n_{n-k} \circ \theta^n_{m+k-n}
= \theta^n_n \circ \theta^n_{m+k-n} = \theta^n_{(m+k)\operatorname{mod} n}.
\]
Therefore, from now on we implicitly interpret all subscripts to $\theta^n$ as taken $\operatorname{mod} n$.
Naturality of $\theta$ also immediately implies that for any 2-cells $f_i\colon M_i\to N_i$, we have
\begin{equation}\label{eq:theta-nat}
\theta^n_k \,\circ\, \Bigsh{f_1\odot\cdots\odot f_{n}} \;=\;
\Bigsh{f_{(k+1)\operatorname{mod} n}\odot\cdots\odot f_n \odot f_0 \odot\cdots\odot f_{k}} \,\circ\, \theta^n_k
\end{equation}
Finally, given $n$ lists $M_{k,1},\dots,M_{k,m_k}$ for $1\le k\le n$ such that the concatenated list $M_{1,1},\dots,M_{n,m_n}$ is cyclically composable, for any $0\le k< n$ we have
\begin{equation}\label{eq:theta-combine}
\theta^n_k = \theta^{m_1+\dots+m_n}_{m_1+\dots+m_{k}}
\;\colon
\bigsh{M_{1,1} \odot\cdots \odot M_{n,m_n}} \too
  \bigsh{M_{k+1,1}\odot\cdots\odot M_{k,m_k}}.
\end{equation}
This follows easily from repeated application of the hexagon axiom.

Now, if $\Gamma$ is invertible from $a$ to $b$, then we have bijections
\[\mathrm{dom}(\Gamma) \cong \pi_0(\Gamma)\cong \mathrm{cod}(\Gamma)\]
between the domain, connected components, and codomain, and the composite preserves cyclic order.
Therefore, the ordered lists $\mathrm{dom}(\Gamma)$ and $\mathrm{cod}(\Gamma)$ differ by a cyclic shift of $k$ for some $0\le k<n$, so we can define $v(\Gamma) = \theta^n_k$.

\begin{prop}\label{refineval-inv}
If $u$ is a regular level for an invertible diagram $\Gamma$ between levels $a$ and $b$, then for any valid choice of $\delta_u$, the diagrams $\Gamma[a, u]$ and $\Gamma[u, b]$ are invertible and
\[v(\Gamma) = v(\Gamma[u, b])\circ v(\Gamma[a, u]).\]  
\end{prop}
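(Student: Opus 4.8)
The plan is to mimic the proof of \autoref{refinevaluation}, but with the combinatorics of cyclic shifts in place of the functoriality of $\odot$ and $\sh{-}$. First I would check the stated closure property: if $\Gamma$ is invertible (no inner vertices), then $\Gamma[a,u]$ and $\Gamma[u,b]$ are subgraphs of $\Gamma$ and hence also have no inner vertices, so both are invertible; this is immediate from the definition of a layer, noting that the cut at the regular level $u$ only introduces new \emph{outer} vertices. The only subtlety is that we must fix a valid codomain basepoint $\delta_u$ for $\Gamma[a,u]$ and domain basepoint $\delta_u$ for $\Gamma[u,b]$, i.e. a point of $S^1$ with $(\delta_u,u)\notin\Gamma$; such exists since $S^1\times\{u\}$ meets only finitely many edges.

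Next I would set up the bookkeeping. Let $n$ be the number of connected components of $\Gamma$, and recall the three-way cyclic-order-preserving bijections $\mathrm{dom}(\Gamma)\cong\pi_0(\Gamma)\cong\mathrm{cod}(\Gamma)$. Restricting to the regular level $u$, each component of $\Gamma$ meets $S^1\times\{u\}$ in exactly one point, so we also have $\pi_0(\Gamma)\cong\mathrm{cod}(\Gamma[a,u]) = \mathrm{dom}(\Gamma[u,b])$, and these identifications are compatible with the ones for $\mathrm{dom}(\Gamma)$ and $\mathrm{cod}(\Gamma)$. Writing $k_1$ for the cyclic shift relating $\mathrm{dom}(\Gamma)$ to the $u$-level ordering (based at $\delta_a$ versus $\delta_u$) and $k_2$ for the shift relating the $u$-level ordering to $\mathrm{cod}(\Gamma)$ (based at $\delta_u$ versus $\delta_b$), we have by definition $v(\Gamma[a,u]) = \theta^n_{k_1}$, $v(\Gamma[u,b]) = \theta^n_{k_2}$, and $v(\Gamma) = \theta^n_k$ where $k$ is the shift relating $\mathrm{dom}(\Gamma)$ to $\mathrm{cod}(\Gamma)$. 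Since cyclic shifts compose additively on the circle, $k \equiv k_1 + k_2 \pmod n$. Then the identity $v(\Gamma) = v(\Gamma[u,b])\circ v(\Gamma[a,u])$ is exactly \eqref{eq:theta-add}: $\theta^n_{k_2}\circ\theta^n_{k_1} = \theta^n_{(k_1+k_2)\operatorname{mod} n} = \theta^n_k$.

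The main obstacle is purely the care needed in verifying that the three shifts line up, i.e. that $k \equiv k_1 + k_2$, and in particular that the cyclic shift $v(\Gamma[a,u])$ computed with respect to the \emph{chosen} $\delta_u$ cancels correctly against the shift $v(\Gamma[u,b])$ computed with the \emph{same} $\delta_u$ — this is why the proposition asserts independence of the valid choice of $\delta_u$. The cleanest way to see the independence is to observe that changing $\delta_u$ by crossing $j$ edges changes $k_1$ by $j$ and $k_2$ by $-j$ (mod $n$), leaving $k_1+k_2$ fixed; alternatively one can invoke \eqref{eq:theta-add} directly, since $\theta^n_{k_2+j}\circ\theta^n_{k_1-j} = \theta^n_{k_2+k_1}$ regardless of $j$. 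I would also remark that the degenerate cases $u=a$ or $u=b$ are covered because $\theta^n_0 = \id$ makes the corresponding layer's value the identity. No analogue of \autoref{lem:tensdecomp1} is needed here since invertible diagrams carry no tensor-decomposition choices affecting the value — the only choice is $\delta_u$, handled as above.
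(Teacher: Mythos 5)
Your proposal is correct and follows essentially the same route as the paper: the paper's proof simply observes that if the two layers induce cyclic shifts of $k$ and $m$ then $\Gamma$ induces a shift of $k+m$, and invokes~\eqref{eq:theta-add}. Your write-up adds the (correct and harmless, if not strictly demanded) verifications that the layers are invertible and that the value is independent of the valid choice of $\delta_u$, both of which the paper leaves implicit.
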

\begin{proof}
  If $\Gamma[a,u]$ and $\Gamma[u,b]$ induce cyclic shifts of $k$ and $m$, respectively, then $\Gamma$ must induce one of $k+m$.
  Thus, the proposition follows from~\eqref{eq:theta-add}.
\end{proof}

\subsection{Value of a progressive cylinder diagram}

We are now ready to define the value of an arbitrary progressive cylinder diagram.
Let $\Gamma$ be such a diagram between levels $a$ and $b$; its value is
\[v(\Gamma)=v(\Gamma[u_{n-1},u_n])\circ \cdots \circ v(\Gamma[u_0,u_1]),\]
where $a = u_0 < u_1 < \cdots < u_n = b$ are regular levels for $\Gamma$ such that each
layer $\Gamma[u_{i-1},u_i]$ is elementary or invertible, and we have chosen valid basepoints $\delta_i$ (for $0<i<n$) disjoint from $\Gamma\cap S^1\times\{u_i\}$.
Such a decomposition of $\Gamma$ exists by \autoref{regularlevels}.

\begin{prop}
  The value of a progressive cylinder diagram, as defined above, is well-defined, i.e.\ it is independent of the choice of levels $u_i$ and basepoints $\delta_i$.
\end{prop}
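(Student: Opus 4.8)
\emph{Approach.} The argument follows the template of Joyal and Street~\cite{js:geom-tenscalc-i}, the one genuinely new feature being the bookkeeping of basepoints. Call two decompositions of $\Gamma$ (into regular levels with valid intermediate basepoints, each layer elementary or invertible) \emph{equivalent} if they can be connected by a finite sequence of the following three moves, each of which manifestly preserves the value: (i) inserting a new regular level together with a valid basepoint; (ii) deleting such a level; (iii) replacing the basepoint $\delta_i$ at an internal level $u_i$ by another valid one. It suffices to show that moves (i)--(iii) preserve the value and that any two decompositions are equivalent; by \autoref{regularlevels} a decomposition exists at all, so this gives well-definedness.

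\emph{The moves.} For move (i) (and hence (ii)), suppose a new regular level $u$ is inserted into a layer $\Gamma[u_{i-1},u_i]$. By \autoref{refinevaluation} or \autoref{refineval-inv}, according as that layer is elementary or invertible, the sub-layers $\Gamma[u_{i-1},u]$ and $\Gamma[u,u_i]$ are again elementary (with the basepoint at $u$ forced to be the layer's common basepoint) or invertible (with the basepoint at $u$ free), and the value of the layer equals $v(\Gamma[u,u_i])\circ v(\Gamma[u_{i-1},u])$; associativity of composition in $\bT$ then shows the overall value is unchanged. For move (iii) the crucial point is the clause ``for any valid choice of $\delta_u$'' in \autoref{refineval-inv}: if the two layers flanking an internal level $u$ are \emph{both} invertible, then $\Gamma[u_{i-1},u_{i+1}]$ is invertible and $v(\Gamma[u,u_{i+1}])\circ v(\Gamma[u_{i-1},u])$ is independent of the basepoint at $u$, so re-basing at $u$ does not change the value. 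A basepoint change at an arbitrary internal regular level $u$ is reduced to this case by first inserting, via move (i), two auxiliary levels $u-\epsilon$ and $u+\epsilon$: since $u$ is regular and only finitely many levels contain inner vertices, for $\epsilon$ small enough the ``thin slab'' $\Gamma[u-\epsilon,u+\epsilon]$ contains no inner vertex, so the two new flanking layers $\Gamma[u-\epsilon,u]$ and $\Gamma[u,u+\epsilon]$ are invertible, as required (and the value of the elementary layer that was split off to their sides is untouched, since the basepoints at $u\pm\epsilon$ remain equal to what they were forced to be).

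\emph{Comparing two decompositions.} Given $D_1$ and $D_2$, first insert (move (i)) into each all the regular levels of the other, reducing to the case where $D_1$ and $D_2$ share the same underlying set of regular levels. They can then differ only in the basepoints at internal levels, and these are reconciled one level at a time by move (iii), preceded where necessary by inserting matching thin-slab levels in both decompositions so that the level being re-based is flanked by invertible layers. This produces a common decomposition, proving $D_1$ and $D_2$ equivalent.

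\emph{Main obstacle.} The delicate part — the place where this genuinely exceeds the planar argument of~\cite{js:geom-tenscalc-i} — is exactly the basepoint bookkeeping in the last two paragraphs: because a layer admits a tensor decomposition only when its domain and codomain basepoints agree, an elementary layer pins down the basepoints at its two ends (and a run of consecutive elementary layers must all carry the same basepoint), so one must check that every intermediate decomposition arising while inserting levels and adjusting basepoints still has each layer elementary or invertible. Routing every basepoint change through a thin invertible slab, where \autoref{refineval-inv} supplies basepoint-independence, is precisely what keeps the process valid and makes it terminate; everything else is a routine transcription of Joyal and Street's proof.
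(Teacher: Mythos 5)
Your reduction breaks down at exactly the point you flag as delicate. The thin-slab trick does let you re-base a level that is flanked on \emph{both} sides by invertible layers, and the paper's proof uses it for precisely that purpose; but it cannot be made to apply to the two levels bounding an elementary layer that contains inner vertices. Such a layer forces its top and bottom basepoints to be \emph{equal} (otherwise it admits no tensor decomposition at all), so passing from $D_1$ to $D_2$ requires changing both of them \emph{simultaneously}. Changing them ``one level at a time'' by move~(iii) passes through a configuration in which that layer has distinct top and bottom basepoints while still containing inner vertices, hence is neither elementary nor invertible and has no defined value. Nor can you arrange for such a level to be flanked below (say) by an invertible layer: any layer whose top is that level and which reaches down to the first inner vertex fails to be invertible, and splitting off a thin invertible slab merely transfers the problem to the next level down. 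So move~(iii), as you have stated and justified it, never reaches the basepoints that actually matter for the elementary layers, and \autoref{refineval-inv} alone cannot supply the required basepoint-independence.

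The missing content is the computation occupying the second half of the paper's proof. When both ends of an elementary layer are re-based from $\delta$ to $\delta'$, its value changes from $\bigsh{w(\Gamma^1)\odot\cdots\odot w(\Gamma^n)}$ to a cyclic rotation $\bigsh{w(\Gamma^{k+1})\odot\cdots\odot w(\Gamma^n)\odot w(\Gamma^1)\odot\cdots\odot w(\Gamma^k)}$, while the flanking invertible slabs acquire the values $\theta^p_q$ and $\theta^p_{p-q}$, cyclic shifts counted in \emph{edges}. One must then check that conjugating the rotated value by these shifts restores the original: this requires identifying $\theta^p_q$ with $\theta^n_k$ (a shift counted in \emph{tensor factors}) via \eqref{eq:theta-combine}, and then invoking naturality \eqref{eq:theta-nat} and additivity \eqref{eq:theta-add} of $\theta$, i.e.\ the hexagon and unit axioms of the shadow. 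This is not a routine transcription of Joyal and Street; it is the one step where the shadow axioms genuinely enter the well-definedness argument, and your proposal omits it entirely.
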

\begin{proof}
  By Propositions \ref{refinevaluation} and \ref{refineval-inv}, the value is invariant under adding more regular levels (with arbitrary valid basepoints) to the list $(u_i)$.
  Moreover, if a diagram is both elementary and invertible, then the two ways of defining its value agree (both are an identity 2-cell).
  Thus, given any two choices of levels and basepoints, we can add new levels to each, without changing their values, until they have the same regular levels.
  However, some interior levels might still be equipped with two different choices of basepoint.

  Now we add further new levels $u_i^-$ and $u_i^+$ immediately below and above each $u_i$, for $0<i<n$, with the same basepoints as $u_i$.
  If we choose these new levels close enough to $u_i$, then the layers $\Gamma[u_i^-,u_i]$ and $\Gamma[u_i,u_i^+]$ are both elementary and invertible.
  By two applications of \autoref{refineval-inv}, we can then arbitrarily change the basepoint of the level $u_i$ (subject to disjointness from $\Gamma$) without changing the assigned value.
  Thus, by considering the layers $\Gamma[u_i,u_{i+1}]$ one at a time, we may assume that $n=3$, that $\Gamma[u_0,u_1]$ and $\Gamma[u_2,u_3]$ are invertible, and that for one of the choices of basepoints $v(\Gamma[u_0,u_1])$ and $v(\Gamma[u_2,u_3])$ are identities.

  If $\Gamma[u_1,u_2]$ is also invertible, we are done by \autoref{refineval-inv}, so assume it is elementary and not invertible.
  Write $\delta$ for the basepoint assigned to the level $u_1$ which makes $v(\Gamma[u_0,u_1])$ and $v(\Gamma[u_2,u_3])$ identities, and $\delta'$ for the other basepoint assigned to $u_1$.
  (Since $\Gamma[u_1,u_2]$ is elementary, these are also the basepoints assigned to the level $u_2$.)
  We will write $v$ and $v'$ for the values determined by these two choices of basepoints.
  Let $\sh{\Gamma^1\odot\cdots\odot \Gamma^m}$ be a tensor decomposition of $\Gamma[u_1,u_2]$ which is valid for both $\delta$ and $\delta'$.
  (Such exists, since any two tensor decompositions have a common refinement.)
  Then by assumption
  \[v(\Gamma) =\; \bigsh{w(\Gamma^1)\odot\cdots\odot w(\Gamma^m)},\]
  while
  \[v'(\Gamma) = v'\big(\Gamma[u_2,u_3]\big) \;\circ\;
  \Bigsh{w(\Gamma^{k+1})\odot\cdots\odot w(\Gamma^n)\odot w(\Gamma^0)\odot\cdots\odot w(\Gamma^k)}
  \;\circ\; v'\big(\Gamma[u_0,u_1]\big).
  \]
  for some $k\in \{0,\dots,n-1\}$.

  Now, if there are $p$ edges meeting level $u_0$ and (equivalently) level $u_1$, then we have $v'(\Gamma[u_0,u_1]) = \theta^p_q$ for some $q\in \{0,\dots,p-1\}$.
  But if there are $m_i$ edges in the source of $\Gamma^i$ for each $i$, then we have $p=m_1+\cdots+m_n$, and moreover $q = m_1+\dots+m_k$ by placement of the basepoints.
  Therefore,~\eqref{eq:theta-combine} implies that $v'(\Gamma[u_0,u_1]) = \theta^p_q = \theta^n_k$.
  By a similar argument, $v'(\Gamma[u_2,u_3]) = \theta^n_{n-k}$, so by~\eqref{eq:theta-nat} we have
  \begin{align*}
    v'(\Gamma)
    &= \theta^n_{n-k} \;\circ\;
    \Bigsh{w(\Gamma^{k+1})\odot\cdots\odot w(\Gamma^n)\odot w(\Gamma^0)\odot\cdots\odot w(\Gamma^k)}
    \;\circ\; \theta^n_k\\
    &= \bigsh{w(\Gamma^1)\odot\cdots\odot w(\Gamma^m)} \;\circ \theta^n_{n-k} \circ \theta^n_k\\
    &= \bigsh{w(\Gamma^1)\odot\cdots\odot w(\Gamma^m)}\\
    &= v(\Gamma).\qedhere
  \end{align*}
\end{proof}

\subsection{Deformations of progressive cylinder diagrams}

Finally, we prove that the value of a diagram is invariant under deformations.

\begin{defn}
  A \textbf{deformation of progressive cylinder graphs} consists of an embedding $\Gamma\hookrightarrow S^1\times [a,b]$ together with continuous maps
  \begin{gather*}
    H\colon S^1\times [a,b] \times [0,1] \too S^1\times [a,b]\\
    \delta_a\colon [0,1] \too S^1\\
    \delta_b\colon [0,1] \too S^1
  \end{gather*}

  such that
  \begin{itemize}
  \item For each $t\in[0,1]$, the map $H_t = H(-,-,t) \colon S^1\times [a,b] \to S^1\times [a,b]$ is a homeomorphism, which restricts to a self-homeomorphism of the boundary components $S^1\times\{a\}$ and $S^1\times\{b\}$;
  \item $H_0$ is the identity of $S^1\times[a,b]$; and
  \item for each $t\in [0,1]$, the composite
    \[ \Gamma \hookrightarrow S^1\times[a,b] \xto{H_t} S^1\times[a,b] \]
    is a progressive cylinder graph with basepoints $\delta_a(t)$ and $\delta_b(t)$.
  \end{itemize}
\end{defn}

For $t\in [0,1]$ we denote the above composite by $\Gamma\res{t}$.
Since $H_t$ is a homeomorphism, it induces a bijection between connected components of the complements of $\Gamma\res{0}$ and $\Gamma\res{t}$; in other words we have 
$(\Gamma\res{0})_2 \cong (\Gamma\res{t})_2$.
Of course, we also have $(\Gamma\res{0})_1 \cong (\Gamma\res{t})_1$ and 
$(\Gamma\res{0})_0 \cong (\Gamma\res{t})_0$, since the underlying topological graph $\Gamma$ is the same for all $t$.
Therefore, a valuation $v$ on $\Gamma\res{0}$ can be transported canonically along $H$ to give a valuation on $\Gamma\res{t}$ for all $t$, and in particular on $\Gamma\res{1}$ (the ``target'' of the deformation).  We call this valuation on $\Gamma\res{t}$ the valuation 
\textbf{induced} from 
the valuation on $\Gamma\res{0}$ by $H$ and denote it by $v_H$.

\begin{thm}If $(H,\delta_a,\delta_b)$ 
 is a deformation of progressive cylinder graphs and $v$ is a valuation on $\Gamma\res{0}$,
then
\[v(\Gamma\res{0})=v_H(\Gamma\res{1}).\]
\end{thm}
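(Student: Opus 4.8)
The plan is to follow the structure of the classical Joyal--Street deformation argument, reducing invariance under an arbitrary deformation to invariance under elementary moves that change only one layer at a time. First I would use the compactness of $[0,1]$ together with \autoref{regularlevels} to produce, for each parameter value, a decomposition into elementary and invertible layers, and then observe that by a Lebesgue-number argument one can choose a single partition $0=t_0<t_1<\dots<t_N=1$ of the deformation parameter so that on each subinterval $[t_{j-1},t_j]$ the deformation does not ``cross'' any level at which layers merge or split. Thus it suffices to prove the theorem for a deformation so small that it carries a fixed decomposition $a=u_0<\dots<u_n=b$ of $\Gamma\res0$ into elementary/invertible layers to a decomposition of $\Gamma\res{t}$ with the same number of layers of the same types, with the regular levels $u_i$ moving continuously (and staying regular) and the basepoints $\delta_i$ moving continuously. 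By the definition of $v(\Gamma)$ as a composite of layer-values and the fact (already established in the well-definedness proof) that the value is independent of the choice of regular levels and basepoints, it then suffices to prove layer-wise invariance: that for an \emph{elementary} diagram and for an \emph{invertible} diagram, a sufficiently small deformation through progressive cylinder diagrams does not change the value.

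The invertible case is essentially combinatorial: a small deformation cannot change the cyclic shift $k$ relating $\mathrm{dom}(\Gamma)$ to $\mathrm{cod}(\Gamma)$ (both are determined by the same underlying graph and its connected components, and the identification $\mathrm{dom}(\Gamma)\cong\pi_0(\Gamma)\cong\mathrm{cod}(\Gamma)$ respecting cyclic order is a discrete datum, hence locally constant in $t$), so $v(\Gamma\res t)=\theta^n_k$ is constant. For the elementary case, I would argue that a small deformation of an elementary diagram with tensor decomposition $\sh{\Gamma^1\odot\dots\odot\Gamma^n}$ preserves the decomposition, each $\Gamma^i$ remaining prime (with the same inner vertex $x$, hence the same $v_0(x)$ once one checks the orderings on $\mathrm{in}(x),\mathrm{out}(x)$ and the left/right component assignments $\sL,\sR$ are unchanged) or invertible (value still an identity). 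The orderings and the functions $\sL,\sR$ are defined via cyclic orders on circles $S^1\times\{u\}$ disjoint from the relevant vertex, and a continuity argument shows they cannot jump under a homeomorphism isotopic to the identity; this is exactly the analogue of the corresponding step in~\cite{js:geom-tenscalc-i}, transported from the strip to the cylinder. Hence $w(\Gamma^i)$ is unchanged for each $i$, and so is $v(\Gamma)=\sh{w(\Gamma^1)\odot\dots\odot w(\Gamma^n)}$.

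Finally I would assemble these pieces: compose the invariances over the subintervals $[t_{j-1},t_j]$, using the already-proven independence of $v(\Gamma)$ from the auxiliary choices to glue the decompositions at the breakpoints $t_j$, and conclude $v(\Gamma\res0)=v(\Gamma\res1)$ with the induced valuation $v_H$.

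I expect the main obstacle to be the elementary case, specifically verifying rigorously that the discrete data attached to a prime layer---the linear orders on $\mathrm{in}(x)$ and $\mathrm{out}(x)$, and the values of $\sL$ and $\sR$ on the edges---are preserved by a small deformation. This is the point where the genuinely topological content of the Joyal--Street theorem enters, and where one must be careful that the cylinder geometry (and the moving basepoints $\delta_i$, which may sweep across edge endpoints) does not introduce new phenomena beyond the planar case; everything else is bookkeeping with the algebraic identities~\eqref{eq:theta-add}, \eqref{eq:theta-nat}, \eqref{eq:theta-combine} and Propositions~\ref{refinevaluation} and~\ref{refineval-inv} already in hand. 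As in~\cite{js:geom-tenscalc-i}, the rigorous treatment of this step is somewhat delicate but presents no conceptual difficulty once the reduction to small deformations of single layers has been made.
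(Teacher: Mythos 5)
Your proposal is correct and follows essentially the same strategy as the paper's proof: establish local constancy of $v_H(\Gamma\res{t})$ in $t$ by observing that the combinatorial data of a fixed decomposition into elementary and invertible layers (regularity of levels, disjointness from basepoints, confinement of each tensor factor to its open set $U^j_i$) are open conditions, and then conclude by connectedness of $[0,1]$ (your Lebesgue-number partition is an equivalent packaging of this). The one point where your worry is slightly misplaced is the interior basepoints $\delta_i$: in the paper's argument these are held \emph{fixed} on each neighborhood $V_{t_0}$, and the only moving basepoints are $\delta_a(t),\delta_b(t)$, which are required by the definition of a deformation to stay off the graph.
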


\begin{proof}
For each
$0\leq t_0\leq 1$ we will find a connected relatively-open subset $V_{t_0}$ of $[0,1]$,
 containing $t_0$, such that for all $t\in V_{t_0}$ we have
\[v_{H}(\Gamma\res{t}) = v_{H}(\Gamma\res{t_0}).\]
Since $[0, 1]$ is connected, this will show that $v_{H}(\Gamma\res{t})$ is constant in $t$.

First we use \autoref{regularlevels} to choose regular levels
\[a=u_0<u_1<\ldots <u_n=b\]
for $\Gamma\res{t_0}$, and basepoints $\delta_i\in S^1$ for $0< i< n$, such that the layers $\Gamma\res{t_0}[u_{0},u_{1}]$ and $\Gamma\res{t_0}[u_{n-1},u_{n}]$ are invertible, and the remaining layers $\Gamma\res{t_0}[u_{i},u_{i+1}]$ are either elementary or invertible.
Moreover, for each $i$ such that $\Gamma\res{t_0}[u_i,u_{i+1}]$ is elementary, we choose a tensor decomposition
\[\bigsh{\Gamma^1_i \odot \Gamma^2_i \odot \cdots \odot \Gamma^{k_i}_i}\]
of it such that each $\Gamma^j_i$ is prime or invertible, and we also choose pairwise disjoint, connected, open subsets $U^j_i \subset S^1$ such that $\Gamma^j_i\subset U^j_i\times [u_i,u_{i+1}]$ for all $i$ (which exist by the definition of a tensor decomposition).
Moreover, whenever $\Gamma^j_i$ is prime, we write $x^j_i$ for its unique inner vertex.

Now, we observe that the following conditions are all \emph{open} conditions as functions of $t$, and that there are finitely many of them.
\begin{enumerate}
\item Each $u_i$ is a regular level for $\Gamma\res{t}$.
\item $(\delta_i,u_i)\notin\Gamma\res{t}$ whenever $0<i<n$.
\item $(\delta_a(t),a)\notin \Gamma\res{t}$ and $(\delta_b(t),b)\notin \Gamma\res{t}$.
\item For each $i$ such that $\Gamma\res{t_0}[u_i,u_{i+1}]$ is elementary, we have
  \[\Gamma\res{t}[u_i,u_{i+1}]  \subset \bigcup_j U^j_i.\]
\end{enumerate}
Therefore, we can choose an open subset $V_{t_0}$ of $[0,1]$, containing $t_0$, such that all of these conditions are satisfied for all $t\in V_{t_0}$.
This implies the following.
\begin{itemize}
\item No inner vertex can cross one of the levels $u_i$ during $V_{t_0}$.
  Therefore, any invertible layer of $\Gamma\res{t_0}$ remains invertible in $\Gamma\res{t}$ for $t\in V_{t_0}$.
\item No edge can cross over a basepoint $(\delta_i,u_i)$ (including $(\delta_a(t),a)$ and $(\delta_b(t),b)$) during $V_{t_0}$.
  Therefore, the linear orders of the domain and codomain of each invertible layer remain the same throughout $V_{t_0}$.
  This means that the cyclic shift it induces also remains constant, and hence so does its value.
\item Inside a layer which is elementary in $\Gamma\res{t_0}$, no edge or inner vertex can cross from one $U^j_i$ to another during $V_{t_0}$, since the $U^j_i$ are pairwise disjoint for $1\le j \le k_i$.
  Therefore, if $\Gamma\res{t_0}[u_i,u_{i+1}] \cap U^j_i$ is prime or invertible, it remains so in $\Gamma\res{t}$ for $t\in V_{t_0}$, and its value also remains the same.
\item If $\Gamma\res{t_0}[u_i,u_{i+1}]$ is elementary, then $0<i<(n-1)$, and hence the (equal) basepoints $\delta_i$ and $\delta_{i+1}$ do not change with $t$.  Thus the basepoints of $\Gamma\res{t}[u_i,u_{i+1}]$ are also equal.
\item Therefore, if $\Gamma\res{t_0}[u_i,u_{i+1}]$ is elementary, then so is $\Gamma\res{t}[u_i,u_{i+1}]$ for $t\in V_{t_0}$, and its value is also constant.
\end{itemize}
Finally, since the value of $\Gamma\res{t}$ is the composite of those of its layers, this value is constant over $V_{t_0}$, as desired.
\end{proof}

\begin{rmk}
  Just as the main theorem about string diagrams for monoidal categories is exploited in~\cite{js:geom-tenscalc-i} to give a presentation of the free monoidal category on a tensor scheme, our theorem about string diagrams for bicategories with shadows can be used to give a presentation of the free bicategory-with-shadow on a computad (see~\cite{street:catstruct,street:catval-2lim} for the notion of computad).
  However, we will not pursue this direction here.
\end{rmk}




\bibliographystyle{plain}
\bibliography{traces_2}

\end{document}